\newcommand*\MSC[1][1991]{\par\leavevmode\hbox{%
\textit{#1 Mathematical subject classification:\ }}}
\newcommand\blfootnote[1]{%
  \begingroup
  \renewcommand\thefootnote{}\footnote{#1}%
  \addtocounter{footnote}{-1}%
  \endgroup
}
\def \de {\partial}
\def \N {\mathbb{N}}
\def \phi {\varphi}
\def \RNu {\mathbb{R}^{N+1}}
\def \RN {\mathbb{R}^N}
\def \R {\mathbb{R}}
\def \K {\mathscr{K}}
\def \G{\Gamma}
\def \So {\mathscr{S}}
\newcommand{\As}{(-\mathscr A)^s}
\newcommand{\sA}{\mathscr A}
\newcommand{\Bps}{\mathfrak B^\sA_{s,p}}
\newcommand{\Rn}{\mathbb R^n}
\newcommand{\p}{\partial}
\newcommand{\la}{\lambda}
\numberwithin{equation}{section}
\newcommand{\beq}{\begin{equation}}
\newcommand{\bea}[1]{\begin{array}{#1} }
\newcommand{\eeq}{ \end{equation}}
\newcommand{\ea}{ \end{array}}
\newcommand{\ve}{\varepsilon}
\newcommand{\In}{\mathbf 1_E}
\newcommand{\Lp}{L^p}
\newcommand{\tr}{\operatorname{tr} B}
\newtheorem{theorem}{Theorem}[section]
\newtheorem{lemma}[theorem]{Lemma}
\newtheorem{proposition}[theorem]{Proposition}
\newtheorem{corollary}[theorem]{Corollary}
\numberwithin{equation}{section}
\begin{document}

\title[On the limiting behaviour, etc.]{On the limiting behaviour of some nonlocal seminorms:\\ a new phenomenon}

\blfootnote{\MSC[2020]{46E30, 35H10, 47B99}}
\keywords{non-symmetric semigroups, limiting behaviour, Besov seminorms, fractional powers}

\date{}

\begin{abstract}
In this note we study the behaviour as $s\to 0^+$ of some semigroup based Besov seminorms associated with a non-symmetric and hypoelliptic diffusion with a drift. Our results generalise a previous one of Maz'ya and Shaposhnikova for the classical fractional Sobolev spaces $W^{s,p}$, and they also underscore a new phenomenon caused by the presence of the drift.
\end{abstract}

\author{Federico Buseghin}
\address{Department of Mathematical Sciences\\ University of Bath\\ Bath BA2 7AY, United Kingdom}

\vskip 0.2in

\email{fb588@bath.ac.uk}

\author{Nicola Garofalo}

\address{Dipartimento d'Ingegneria Civile e Ambientale (DICEA)\\ Universit\`a di Padova\\ Via Marzolo, 9 - 35131 Padova,  Italy}
\vskip 0.2in
\email{nicola.garofalo@unipd.it}

\thanks{The second author is supported in part by a Progetto SID (Investimento Strategico di Dipartimento) ``Non-local operators in geometry and in free boundary problems, and their connection with the applied sciences", University of Padova, 2017. The second and third authors are supported in part by a Progetto SID: ``Non-local Sobolev and isoperimetric inequalities", University of Padova, 2019.}

\author{Giulio Tralli}
\address{Dipartimento d'Ingegneria Civile e Ambientale (DICEA)\\ Universit\`a di Padova\\ Via Marzolo, 9 - 35131 Padova,  Italy}
\vskip 0.2in
\email{giulio.tralli@unipd.it}

\maketitle

\tableofcontents

\section{Introduction}\label{S:intro}

The limiting behaviour of some classical nonlocal seminorms has been the subject of increasing interest in recent years because of its connection with various function spaces, such as $\Lp$, Sobolev or $BV$ spaces. For $1\le p < \infty$ and $s\in(0,1)$ we denote by $W^{s,p}$ the Banach space of functions $f\in \Lp$ with finite Aronszajn-Gagliardo-Slobedetzky seminorm, 
\begin{equation}\label{ags}
[f]_{s,p} = \left(\int_{\RN} \int_{\RN} \frac{|f(x) - f(y)|^p}{|x-y|^{N+ps}} dx dy\right)^{1/p},
\end{equation}
see e.g. \cite{Ad} or also \cite{DPV}. In 
their celebrated works \cite{BBM1}, \cite{BBM2} (see also \cite{B}) Bourgain, Brezis and Mironescu discovered a new characterisation of the spaces $W^{1,1}$ and $BV$ based on the study of the limiting behaviour of the spaces $W^{s,p}$ as $s\to 1$. 
We also mention the earlier work \cite{MN}, in which the authors had already settled the case $p=2$ of the Bourgain-Brezis-Mironescu limiting theorem, and the work \cite{Ma}, which further analysed the case $p=1$. In their paper \cite{MS}  Maz'ya \& Shaposhnikova extended and simplified the results in \cite{BBM2}, and they also analysed the limit as $s\to 0^+$ of the seminorms \eqref{ags}. Regarding the latter, \cite[Theor. 3]{MS} states that if $f\in W^{s_0,p}$ for some $0<s_0<1$, then
\begin{equation}\label{MS}
\underset{s\to 0^+}{\lim} s\ [f]^p_{s,p} = \frac 2p \sigma_{N-1} ||f||^p_{L^p},
\end{equation}
where $\sigma_{N-1}$ is the measure of the unit sphere in $\RN$. 
These results have been extended and completed by several authors. For instance, one should see Milman \cite{Mi}, who placed them in the framework of interpolation spaces, Karadzhov, Milman and Xiao \cite{KMX}, Kolyada and Lerner \cite{KL}, Triebel \cite{Tr}, who generalized them in the context of Besov spaces, and Arcang\'eli and Torrens \cite{AT}.

To introduce the results in the present paper, we now make the key observation that theorem \eqref{MS} admits a dimension-free formulation using the heat semigroup $P^\Delta_t f(x) = e^{-t \Delta} f(x) = (4\pi t)^{-N/2} \int_{\Rn} e^{-\frac{|x-y|^2}{4t}} f(y) dy$. For $s>0$ and $1\le p<\infty$, consider the following heat Besov seminorm
\begin{equation}\label{ndelta}
\mathscr N^\Delta_{s,p}(f) = \left(\int_0^\infty  \frac{1}{t^{\frac{s p}2 +1}} \int_{\RN} P^\Delta_t\left(|f - f(x)|^p\right)(x) dx dt\right)^{\frac 1p}.
\end{equation}
We leave it as an easy exercise for the reader to recognise that 
\begin{equation}\label{equiv}
\mathscr N^\Delta_{s,p}(f)^p = \frac{2^{sp} \G(\frac{N+sp}2)}{\pi^{\frac N2}}\ [f]_{s,p}^p,
\end{equation}
where for $x>0$ we have denoted by $\G(x) = \int_0^\infty t^{x-1} e^{-t} dt$ the Euler gamma function.
Combining \eqref{equiv} with \eqref{MS}, we see that the theorem of Maz'ya \& Shaposhnikova  can be reformulated in terms of the heat seminorm \eqref{ndelta} in the following suggestive dimension-free fashion: assume that $f\in \underset{0<s<1}{\bigcup} W^{s,p}$, then
\begin{equation}\label{MSheat}
\underset{s\to 0^+}{\lim} s\ \mathscr N^\Delta_{s,p}(f)^p
= \frac 4p\ ||f||_{L^p}^p.
\end{equation}

The present work stems from the initial desire of understanding what happens to \eqref{MSheat} when the seminorm $\mathscr N^\Delta_{s,p}(f)$ is replaced by $\mathscr N^\sA_{s,p}(f)$, where $\sA$ is 
the infinitesimal generator of a wide class of non-symmetric semigroups with drift introduced by H\"ormander in his celebrated hypoellipticity paper \cite{Ho}. In the course of our study we have encountered a new,  unexpected phenomenon: the value of the corresponding limit in \eqref{MSheat} depends on the trace of the drift in $\sA$. But in order to state our results precisely, we need to introduce the relevant framework. 

Consider the Kolmogorov-Fokker-Planck operators in $\RNu$ defined as follows:
\begin{equation}\label{K0}
\mathscr K u = \mathscr A u  - \de_t u \overset{def}{=} \operatorname{tr}(Q \nabla^2 u) + <BX,\nabla u> - \de_t u = 0,
\end{equation}
where the $N\times N$ matrices $Q$ and $B$ have real, constant coefficients, and $Q = Q^\star \ge 0$. 
The operators $\K$ and $\sA$ in \eqref{K0} where introduced in \cite{Ho}, where H\"ormander showed that they are hypoelliptic if and only if the covariance matrix
\begin{equation}\label{Kt}
K(t) = \frac 1t \int_0^t e^{sB} Q e^{s B^\star} ds
\end{equation}
is invertible for every $t>0$. This condition will be henceforth tacitly assumed throughout this paper. Since one obviously has $K(t)\ge 0$, the invertibility of such matrix is equivalent to saying $K(t)>0$ for every $t>0$. Although in this paper we are mostly interested in the genuinely degenerate setting $N\ge 2$, our results are in fact true for any $N\ge 1$. With this assumption in place, we will routinely indicate with $X$ the generic point in $\R^N$, with $(X,t)$ the one in $\RNu$. 

Equations such as \eqref{K0} are of considerable interest in physics, probability and finance, and have been the subject of intense study during the past three decades. First, they obviously contain the classical heat equation, which corresponds to the non-degenerate model $Q = I_N$, $B = O_N$. More importantly, they encompass the Ornstein-Uhlenbeck operator (see \cite{OU}), which is obtained by taking $Q = I_N$ and $B = - I_N$ in \eqref{K0}, as well as the degenerate operator of Kolmogorov in $\R^{2n+1}$
\begin{equation}\label{kolmo}
\K_0 u = \Delta_v u + <v,\nabla_x u > - \p_t u,
\end{equation}
corresponding to the choice $N=2n$, 
$Q = \begin{pmatrix} I_n & 0_n\\ 0_n& 0_n\end{pmatrix}$, and $B = \begin{pmatrix} 0_n & 0_n\\ I_n & 0_n\end{pmatrix}$. Such operator arises in the kinetic theory of gases and was first introduced in the seminal note \cite{Ko} on Brownian motion. One should note that $\K_0$ fails to be parabolic since it is missing the diffusive term $\Delta_x u$. However, it does satisfy H\"ormander's hypoellipticity condition since one easily checks that $K(t)=\begin{pmatrix} I_n & t/2\ I_n\\ t/2\ I_n& t^2/3\ I_n\end{pmatrix}>0$ for every $t>0$. In this respect, it should be noted that Kolmogorov himself had already shown the hypoellipticity of his operator since in \cite{Ko} he constructed an explicit fundamental solution for $\K_0$ which is $C^\infty$ outside the diagonal.

Kolmogorov's construction was generalised in \cite{Ho}, where it was shown that, given $f\in \So$, the Cauchy problem $\K u = 0$ in $\RN\times (0,\infty)$, $u(X,0) = f(X)$ admits the unique solution $u(X,t) = \int_{\RN} p(X,Y,t) f(Y) dY$, where 
\begin{equation}\label{PtKt}
p(X,Y,t) = \frac{c_N}{V(t)} \exp\left( - \frac{m_t(X,Y)^2}{4t}\right).
\end{equation}
In \eqref{PtKt}, for $X, Y\in \RN$ we have let
\begin{align}\label{m}
m_t(X,Y) & = \sqrt{<K(t)^{-1}(Y-e^{tB} X ),Y-e^{tB} X >},\ \ \ \ \ \ \ t>0,
\end{align}
whereas, with $B_t(X,r) = \{Y\in \RN\mid m_t(X,Y)<r\}$ and $c_N=\omega_N (4\pi)^{-\frac N2}$, the notation $V(t)$ denotes the so-called volume function
\begin{equation}\label{VS}
V(t) = \operatorname{Vol}_N(B_t(X,\sqrt t)) = \omega_N  (\det(t K(t)))^{1/2},
\end{equation}
see \cite{GThls}. If we indicate with 
\begin{equation}\label{hsg}
P_t^{\sA} f(X) = \int_{\RN} p(X,Y,t) f(Y) dY
\end{equation}
 the H\"ormander semigroup, then it is well-known that, under the assumption that the matrix $B$ of the drift satisfies 
\begin{equation}\label{trace}
\operatorname{tr} B \ge 0,
\end{equation} 
we obtain a non-symmetric semigroup which is contractive on $L^p$, $1\le p\le \infty$. Because of the drift, such semigroup presents several new challenges with respect to the Riemannian or even sub-Riemannian setting. This is already apparent in H\"ormander's formula \eqref{m} above, in which the space and the time variables appear inextricably mixed.  
In a series of papers, see \cite{GT}, \cite{GThls}, \cite{GTfi} and \cite{GTiso}, two of us have recently developed, under the condition \eqref{trace}, some basic functional analytic aspects of the class \eqref{K0}. We note that Kolmogorov's operator \eqref{kolmo} satisfies \eqref{trace} since for such example we have in fact $\tr = 0$. For other operators of interest in physics that satisfy \eqref{trace} we refer the reader to the table in \cite[Figure 1]{GThls}. 
 
We thus come to the question of interest in this paper. In the work \cite{GTfi} a class of Besov spaces naturally associated with the semigroup $P_t^{\sA}$ was introduced. Namely, for any $s>0$ and $1\le p<\infty$ we defined 
the Besov space $\mathfrak B^\sA_{s,p}$ as the collection of all functions $f\in L^p$ such that
\begin{equation}\label{sn}
\mathscr N^\sA_{s,p}(f) = \left(\int_0^\infty  \frac{1}{t^{\frac{s p}2 +1}} \int_{\RN} P^\sA_t\left(|f - f(X)|^p\right)(X) dX dt\right)^{\frac 1p} < \infty.
\end{equation}
Although one might think of $\Bps$ as a natural generalisation of the spaces introduced by Taibleson in \cite{T1}, \cite{T2} using the heat semigroup,  the deeper properties of these spaces are somewhat elusive. The cases $p=2$ and $p=1$ of \eqref{sn} have a special interest in connection with the semigroup based theory of nonlocal isoperimetric inequalities developed in \cite{GTiso}. 

In the present paper we generalise the theorem of Maz'ya \& Shaposhnikova \eqref{MSheat} to the Besov spaces $\mathfrak B_{s,p}^\sA$. Our main result in this direction is the following. 

\begin{theorem}\label{T:MSallp}
Let $1\leq p <\infty$, and assume \eqref{trace}. Suppose that $f\in \underset{0<s<1}{\bigcup}\mathfrak B^\sA_{s,p}$. Then,
\begin{equation}\label{azza}
\underset{s\to 0^+}{\lim} s \mathscr N^\sA_{s,p}(f)^p = \begin{cases}
\frac{4}{p} ||f||_p^p,\ \ \ \ \ \ \ \mbox{if }  \tr = 0,
\\
\\
\frac{2}{p}||f||_p^p,\ \ \ \ \ \ \ \mbox{if } \tr >0.
\end{cases}
\end{equation}
\end{theorem}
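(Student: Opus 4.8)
The plan is to reduce \eqref{azza} to the large-time behaviour of the single quantity
\[
I(t) \;:=\; \int_{\RN} P^\sA_t\big(|f - f(X)|^p\big)(X)\, dX \;=\; \int_{\RN}\!\int_{\RN} p(X,Y,t)\,|f(X) - f(Y)|^p\, dY\, dX .
\]
Indeed, by \eqref{sn} (Tonelli) $\mathscr N^\sA_{s,p}(f)^p = \int_0^\infty t^{-\frac{sp}{2}-1}I(t)\,dt$, so, fixing $s_0\in(0,1)$ with $f\in\mathfrak B^\sA_{s_0,p}$ and taking $0<s\le s_0$,
\[
s\,\mathscr N^\sA_{s,p}(f)^p \;=\; s\int_0^1 \frac{I(t)}{t^{\frac{sp}{2}+1}}\, dt \;+\; s\int_1^\infty \frac{I(t)}{t^{\frac{sp}{2}+1}}\, dt .
\]
The first term is at most $s\int_0^1 t^{-\frac{s_0p}{2}-1}I(t)\,dt\le s\,\mathscr N^\sA_{s_0,p}(f)^p\to 0$ as $s\to 0^+$. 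Since $s\int_1^\infty t^{-\frac{sp}{2}-1}\,dt=\frac2p$, the whole statement follows once we show that $I(t)$ converges as $t\to\infty$, to $2\|f\|_p^p$ if $\tr=0$ and to $\|f\|_p^p$ if $\tr>0$: a routine three-$\ve$ argument, using that $I$ is bounded (see below), then yields $\lim_{s\to0^+} s\int_1^\infty t^{-\frac{sp}{2}-1}I(t)\,dt=\frac2p\lim_{t\to\infty}I(t)$.

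Two normalisations of the kernel drive the dichotomy. The forward one, $\int_{\RN}p(X,Y,t)\,dY=1$, is the Markov property of $P^\sA_t$. The backward one is
\[
\int_{\RN} p(X,Y,t)\, dX \;=\; e^{-t\,\tr},
\]
which follows from \eqref{PtKt}--\eqref{VS} by the substitution $Z=e^{tB}X$ (with Jacobian $e^{t\,\tr}$) together with $\int_{\RN}\exp\big(-\tfrac14\langle (tK(t))^{-1}W,W\rangle\big)dW=(4\pi)^{\frac N2}(\det(tK(t)))^{\frac12}$. From these two facts and $|a-b|^p\le 2^{p-1}(|a|^p+|b|^p)$ one gets at once $I(t)\le 2^{p-1}(1+e^{-t\,\tr})\|f\|_p^p\le 2^p\|f\|_p^p$ for all $t$, the boundedness used above; note the backward normalisation with $\tr\ge0$ is exactly the $L^p$-contractivity of $P^\sA_t$. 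In addition I will use that $V(t)\to\infty$ as $t\to\infty$, equivalently $\|p(\cdot,\cdot,t)\|_{L^\infty}=c_N/V(t)\to0$. This is the one step where the standing hypoellipticity hypothesis and \eqref{trace} enter essentially: if $\tr\ge0$ then $B$ has an eigenvalue of non-negative real part, and the H\"ormander (Kalman) condition forbids $tK(t)=\int_0^t e^{sB}Qe^{sB^\star}ds$ from staying bounded; it can be quoted from \cite{GThls} or established in a short lemma.

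For the convergence of $I(t)$, first take $f$ bounded with compact support, so $|\{f\ne0\}|<\infty$, and split the outer integral over $\{f=0\}$ and $\{f\ne0\}$. On $\{f=0\}$ the integrand is $P^\sA_t(|f|^p)(X)$; by the backward normalisation $\int_{\RN}P^\sA_t(|f|^p)=e^{-t\,\tr}\|f\|_p^p$, while $\int_{\{f\ne0\}}P^\sA_t(|f|^p)(X)\,dX=\int_{\RN}\big(\int_{\{f\ne0\}}p(X,Y,t)\,dX\big)|f(Y)|^p\,dY\le \frac{c_N}{V(t)}|\{f\ne0\}|\,\|f\|_p^p\to 0$, so the $\{f=0\}$ contribution is $e^{-t\,\tr}\|f\|_p^p+o(1)$. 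On $\{f\ne0\}$, using $|f(X)-f(Y)|^p=|f(X)|^p$ where $f(Y)=0$, the contribution splits as $\int_{\{f\ne0\}}|f(X)|^p\big(1-P^\sA_t\mathbf 1_{\{f\ne0\}}(X)\big)dX$ plus $\int_{\{f\ne0\}}\int_{\{f\ne0\}}p(X,Y,t)|f(X)-f(Y)|^p\,dY\,dX$; the first is $\|f\|_p^p+o(1)$ because $\|P^\sA_t\mathbf 1_{\{f\ne0\}}\|_{\infty}\le\frac{c_N}{V(t)}|\{f\ne0\}|\to0$, and the second is $\le\frac{c_N}{V(t)}\,2^p|\{f\ne0\}|\,\|f\|_p^p\to0$. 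Hence $I(t)=(1+e^{-t\,\tr})\|f\|_p^p+o(1)$, i.e. $2\|f\|_p^p+o(1)$ when $\tr=0$ and $\|f\|_p^p+o(1)$ when $\tr>0$. To pass to a general $f\in L^p$, approximate by bounded compactly supported $f_n\to f$ in $L^p$: from $\big||A|^p-|B|^p\big|\le p(|A|+|B|)^{p-1}|A-B|$, H\"older, and the uniform bound $I_g(t)\le 2^p\|g\|_p^p$ one obtains $\sup_{t}|I_f(t)-I_{f_n}(t)|\le C_p(\|f\|_p+\|f_n\|_p)^{p-1}\|f-f_n\|_p\to0$, and since $e^{-t\,\tr}\|f_n\|_p^p\to e^{-t\,\tr}\|f\|_p^p$ uniformly in $t$ (as $e^{-t\,\tr}\le1$), the identity $\lim_{t\to\infty}\big(I_f(t)-e^{-t\,\tr}\|f\|_p^p\big)=\|f\|_p^p$ survives the limit in $n$. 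Together with the first paragraph this proves \eqref{azza}.

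I expect the crux to be conceptual rather than computational: recognising that it is precisely the backward normalisation $e^{-t\,\tr}$ — the rate at which mass ``leaks'' under the adjoint semigroup — that governs the second copy of $\|f\|_p^p$, hence produces the jump between $\frac4p$ and $\frac2p$. The only genuinely non-elementary input is the spreading $V(t)\to\infty$, where hypoellipticity and $\tr\ge0$ are used in an essential way; the density reduction is routine but must be carried out at the level of $|f(X)-f(Y)|^p$ rather than of $f$ itself.
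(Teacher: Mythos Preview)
Your proof is correct and takes a route that differs from the paper's in a meaningful way. Both arguments split at $t=1$ and handle the small-time piece identically (this is Lemma~\ref{L:uno}). For the large-time piece you first prove that $I(t)-(1+e^{-t\,\tr})\|f\|_p^p\to 0$ and then pass to the $s$-limit by a one-line Abelian observation; the paper instead works directly with $s\int_1^\infty t^{-sp/2-1}I(t)\,dt$ through Lemmas~\ref{L:due}--\ref{L:quattro}, distinguishing $\tr>0$ from $\tr=0$ and, within the latter, $p>1$ from $p=1$, by different devices (the pointwise inequality~\eqref{calculus} combined with the ultracontractive bounds of Proposition~\ref{P:ptq} when $p>1$; a compact-exhaustion argument when $p=1$). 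Your computation of $\lim_{t\to\infty} I(t)$ for compactly supported $f$ via the bare fact $V(t)\to\infty$ handles all four cases at once. The more substantial simplification is in the density step: the paper's Lemma~\ref{L:quattro} is proved only for $f\in\mathscr S$, so extending to general $f\in\mathfrak B^\sA_{\sigma,p}$ when $\tr=0$ requires Proposition~\ref{P:density} (density of $C^\infty_0$ in the Besov space), whose proof is the bulk of Section~\ref{S:prop}; your approach needs only density of bounded compactly supported functions in $L^p$, coupled with the uniform-in-$t$ bound $|I_f(t)-I_{f_n}(t)|\le C_p(\|f\|_p+\|f_n\|_p)^{p-1}\|f-f_n\|_p$ (valid also for $p=1$ by the triangle inequality). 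What the paper's longer route buys is that Proposition~\ref{P:density} is of independent interest, that Lemma~\ref{L:tre} applies directly to any $f\in L^p$ when $\tr>0$ without any approximation, and that the intermediate estimates feed into the analysis of the fractional powers in Section~\ref{S:fracpow}.
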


The reader should note the unexpected discrepancy between the cases $\tr = 0$ and $\tr > 0$ in \eqref{azza} above. For instance, whereas for the Besov space generated by Kolmogorov operator \eqref{kolmo} the limit in \eqref{azza} equals $\frac{4}{p} ||f||_p^p$, for the Kolmogorov operator with friction in $\R^{2n+1}$, 
\[
\K_1 u = \Delta_v u  + <v,\nabla_v u> + <v,\nabla_x u> - \de_t u,
\]
 for which $\tr = n>0$, the analogous limit equals $\frac{2}{p} ||f||_p^p$! We also make the remark that, if we agree to say that a measurable set $E\subset \R^N$ has finite $s$-\emph{perimeter} if $\In\in \mathfrak B^\sA_{2s,1}$ and to define the $s$-perimeter associated to $\sA$ as
\[
\mathfrak P_{\sA,s}(E) \overset{def}{=} \mathscr N^\sA_{2s,1}(\In),
\]
then we can readily derive from Theorem \ref{T:MSallp} the following asymptotic result: suppose that $E$ has finite $\sigma$-perimeter for some $\sigma \in (0,\frac{1}{2})$, then
$$
\underset{s\to 0^+}{\lim} s\, \mathfrak P_{\sA,s}(E) = \begin{cases}
2 |E|,\ \ \ \ \ \ \ \mbox{if }  \tr = 0,
\\
\\
\ |E|,\ \ \ \ \ \ \ \,\, \mbox{if } \tr >0.
\end{cases}$$
In \cite{GTbbmd} two of us have addressed the limiting behaviour of $s$-perimeters as $s\nearrow 1/2$ in a different but related sub-Riemannian setting.

Having stated our main result, we now briefly describe the organisation of the present paper. 
In Section \ref{S:prel} we analyse the behaviour of the volume function $V(t)$ defined by \eqref{VS}, and of the H\"ormander semigroup $P^\sA_t$ in \eqref{hsg}. Our key results are Proposition \ref{P:boom} and Proposition \ref{P:balapoint}. The former complements and completes Proposition \ref{P:boom0} below, which was proved in \cite{GThls}. The latter establishes the limiting pointwise behaviour of the
fractional powers
\begin{equation}\label{defpower}
(-\sA)^s f(X) = - \frac{s}{\Gamma(1-s)} \int_0^\infty \frac{1}{t^{1+s}} \left(P^\sA_t f(X) -  f(X)\right) dt, 
\end{equation}
in dependence of the eigenvalues of the drift matrix $B$ in \eqref{K0}.
In Section \ref{S:prop} we gather some basic properties of the Besov spaces $\mathfrak B^\sA_{s,p}$ under the assumption \eqref{trace}. The main result is Proposition \ref{P:density}, which establishes a key density property for such spaces. This result generalises the well-known one for the classical spaces $W^{s,p}$, see e.g. \cite[Theor. 7.38]{Ad} and plays a key role in the present work. Section \ref{S:lim} is devoted to proving Theorem \ref{T:MSallp}. Such proof is based on the four Lemmas \ref{L:uno}-\ref{L:quattro}. Finally, in Section \ref{S:fracpow}  we analyse the asymptotic behaviour as $s\to 0+$ of the fractional powers \eqref{defpower}
under the hypothesis $f\in \underset{0<s<1}{\bigcup}\mathfrak B^\sA_{s,p}$. We note that this assumption is the same as in Theorem \ref{T:MSallp}.
The main results are Theorem \ref{T:2} and Proposition \ref{P:balap1not}, whose proofs are based on some results of independent interest that are closely connected to the arguments of Section \ref{S:lim}. The reader is referred to \cite{GT} for the calculus of the nonlocal operators \eqref{defpower}, and to \cite{GThls}, \cite{GTiso} for optimal Sobolev type embeddings and isoperimetric inequalities.


\section{On the volume function $V(t)$ and the semigroup $P_t^\sA$}\label{S:prel}

We start by collecting some preliminary material that will be used throughout the paper. For more extensive information we refer the reader to \cite[Sec. 2]{GT}, \cite[Sec. 2]{GThls} and \cite{GTfi}. Generic points in $\RN$ will be denoted with the letters $X, Y$ and their Euclidean norms with $|X|$, $|Y|$. The trace and the determinant of a matrix $M$ will be indicated with $\operatorname{tr} M$ and $\operatorname{det} M$ respectively, $M^\star$ denotes the transpose of $M$, and we let $\|M\| = \sup_{|X|=1} |MX|$. Given a measurable set $E\subset \RN$, we also denote by $|E|$ its $N$-dimensional Lebesgue measure. All the function spaces in this paper are based on $\RN$, thus we will routinely avoid reference to the ambient space. For instance, the Schwartz space of rapidly decreasing functions in $\RN$ will be denoted by $\So$, and for $1\leq p \leq \infty$ we let $L^p = L^p(\RN)$. The norm in $\Lp$ will be denoted by $||\cdot||_p$, instead of $||\cdot||_{\Lp}$. Moreover, to simplify the notation we will henceforth indicate with $P_t$, instead of $P_t^\sA$, the H\"ormander semigroup \eqref{hsg} associated with \eqref{K0}, and use the notation $P^\star_t$ for its adjoint. These semigroups possess the following two basic properties:
\begin{equation}\label{stochcompl}
P_t 1=1,\ \ \mbox{i.e.}\ \int_{\R^N}p(X,Y,t)dY= 1,\ \ \ \ \ \ \ \ \ \ \ \ \ \ \ \ \ \ \ \ X\in\R^N, t>0;
\end{equation}
\begin{equation}\label{aggstoch}
P^*_t 1=e^{-t\tr},\ \ \mbox{ i.e. }\ \int_{\R^N}p(X,Y,t)dX= e^{-t\tr}, \ \ \ \ Y\in\R^N, t>0.
\end{equation}
From \eqref{stochcompl} and \eqref{aggstoch} one easily recognises that $||P_t f||_p\leq ||f||_p$ when $\tr\geq 0$. More in general, we have the following $L^p\to L^q$ ultracontractivity of the semigroup $\{P_t\}_{t>0}$, see \cite[Prop. 2.3]{GTfi}. Hereafter, the notation $V(t)$ will indicate the volume function introduced in \eqref{VS}.

\begin{proposition}\label{P:ptq}
For every $1\leq q< \infty$ and $p\geq q$,  we have $P_t: L^q\to L^p$ for any $t>0$, with
\begin{equation}\label{tuttenorme}
||P_t f||_{p} \le \frac{C}{V(t)^{\frac{1}{q}-\frac{1}{p}}}e^{- t \frac{\operatorname{tr} B}{p}} ||f||_{q},
\end{equation}
for some constant $C = C(N,q,p)>0$.
\end{proposition}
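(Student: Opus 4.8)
The plan is to deduce the full $L^q\to L^p$ estimate from two elementary facts about the kernel $p(X,Y,t)$ — a pointwise sup bound and an $L^1$ mass computation — combined with Jensen's inequality, so that no interpolation theorem is actually required. First I would note that, since $K(t)>0$ for every $t>0$, the quadratic form $m_t(X,Y)^2=\langle K(t)^{-1}(Y-e^{tB}X),Y-e^{tB}X\rangle$ is nonnegative, whence $\exp(-m_t(X,Y)^2/4t)\le 1$ and therefore $0\le p(X,Y,t)\le c_N/V(t)$ for all $X,Y\in\R^N$, $t>0$; in particular $\|P_t g\|_\infty\le (c_N/V(t))\|g\|_1$ for every $g\in L^1$. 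On the other hand, integrating the kernel in $X$ and invoking \eqref{aggstoch} together with Tonelli's theorem gives $\|P_t g\|_1\le e^{-t\operatorname{tr}B}\|g\|_1$ for $g\in L^1$.

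Next I would use the stochastic completeness \eqref{stochcompl}: for fixed $X,t$ the measure $p(X,Y,t)\,dY$ is a probability measure on $\R^N$, so Jensen's inequality for the convex function $r\mapsto |r|^q$ (here $q<\infty$) yields $|P_t f(X)|^q\le P_t(|f|^q)(X)$ pointwise. Setting $g:=|f|^q\in L^1$ and recalling $p\ge q$, this gives
\[
\|P_t f\|_p^p=\int_{\R^N}|P_t f(X)|^p\,dX\le\int_{\R^N}\big(P_t g(X)\big)^{p/q}\,dX=\|P_t g\|_{p/q}^{p/q}.
\]
It then remains to bound $\|P_t g\|_a$ with $a:=p/q\ge 1$ for the nonnegative function $g$. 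Writing $(P_t g)^a=(P_t g)\,(P_t g)^{a-1}\le\|P_t g\|_\infty^{a-1}\,(P_t g)$ and integrating, one obtains $\|P_t g\|_a^a\le\|P_t g\|_\infty^{a-1}\|P_t g\|_1$; plugging in the two estimates of the previous paragraph gives $\|P_t g\|_a\le(c_N/V(t))^{1-1/a}e^{-t\operatorname{tr}B/a}\|g\|_1$. Substituting $a=p/q$ and $\|g\|_1=\|f\|_q^q$ and taking $p$-th roots, the exponents collapse to exactly $(c_N/V(t))^{1/q-1/p}e^{-t\operatorname{tr}B/p}\|f\|_q$, which is \eqref{tuttenorme} (with $C=c_N^{1/q-1/p}$ absorbed into $C(N,q,p)$; the borderline case $p=\infty$ is handled the same way, taking suprema instead of integrals).

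The argument is essentially routine; the only care needed is in justifying that $P_t g$ is finite a.e. and belongs to the spaces used above — which follows from the $L^1\to L^1$ and $L^1\to L^\infty$ bounds, first for $g\in L^1\cap L^\infty$ and then by a standard density argument — and in keeping track of the exponents $1-1/a$ and $1/a$ under the substitution $a=p/q$. There is no genuine obstacle: the non-symmetry of the semigroup enters only through the factor $e^{-t\operatorname{tr}B}$, which comes transparently from the adjoint mass identity \eqref{aggstoch}.
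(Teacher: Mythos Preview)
Your argument is correct. The paper itself does not prove this proposition: it is quoted from \cite[Prop.~2.3]{GTfi}, so there is no in-text proof to compare against line by line. The standard route for such ultracontractive estimates (and presumably the one in the cited reference) is Riesz--Thorin interpolation between the endpoint bounds $\|P_t\|_{L^q\to L^q}\le e^{-t\operatorname{tr}B/q}$ and $\|P_t\|_{L^q\to L^\infty}\le (c_N/V(t))^{1/q}$; the interpolation parameter $\theta=1-q/p$ then produces exactly the exponents $1/q-1/p$ and $1/p$ in \eqref{tuttenorme}.

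Your approach is a genuine, more elementary alternative: by first applying Jensen with respect to the probability measure $p(X,\cdot,t)\,dY$ you reduce to an $L^1\to L^a$ estimate for the nonnegative function $g=|f|^q$, and then the trivial interpolation inequality $\|h\|_a^a\le\|h\|_\infty^{a-1}\|h\|_1$ replaces Riesz--Thorin. This has the advantage of being completely self-contained (no complex interpolation, no density or measurability subtleties) and of giving the explicit constant $C=c_N^{1/q-1/p}$; the price is only that one must keep track of the exponent bookkeeping, which you do correctly. Both routes are short and yield the same bound, so the difference is one of taste rather than substance.
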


It is clear that in order to be able to effectively exploit Proposition \ref{P:ptq} it is critical to know the large time behaviour of the volume function $V(t)$. In this respect, we recall the following result, which is \cite[Proposition 3.1]{GThls}.
The notation $\sigma(B)$ indicates the spectrum of the drift matrix $B$ in \eqref{K0}. 

\begin{proposition}\label{P:boom0}
Suppose that $N\geq 2$ and \eqref{trace} hold. Then:
\begin{itemize}
\item[(i)] there exists a constant $c_1>0$ such that $V(t)\geq c_1 t$ for all $t\geq 1$;
\item[(ii)] moreover, if $\max\{\Re(\lambda)\mid \lambda\in \sigma(B)\}=L_0>0$, there exists a constant $c_0$ such that $V(t)\geq c_0 e^{L_0 t}$ for all $t\geq 1.$
\end{itemize}
\end{proposition}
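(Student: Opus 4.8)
The statement to prove is Proposition~\ref{P:boom0}, which has two parts: a linear lower bound $V(t) \geq c_1 t$ for all $t \geq 1$ whenever $\tr \geq 0$, and an exponential lower bound $V(t) \geq c_0 e^{L_0 t}$ when the spectral abscissa $L_0 = \max\{\Re(\lambda) : \lambda \in \sigma(B)\}$ is positive.

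\textbf{Plan of proof.} Recall that $V(t) = \omega_N (\det(tK(t)))^{1/2}$ where $tK(t) = \int_0^t e^{sB} Q e^{sB^\star}\, ds$. Up to the fixed constant $\omega_N$ the problem reduces to a lower bound on $\det\!\left(\int_0^t e^{sB} Q e^{sB^\star}\, ds\right)$. Since $Q = Q^\star \geq 0$ and H\"ormander's condition guarantees $K(t) > 0$ for every $t > 0$, the determinant is strictly positive and continuous in $t$, so on any compact interval $[1, T]$ it is bounded below by a positive constant; the content of both statements is the behaviour as $t \to \infty$. The natural approach is to diagonalise, or rather put $B$ in real Jordan form: write $B = P J P^{-1}$ with $J$ block-triangular collecting the eigenvalues of $B$. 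Then $e^{sB} = P e^{sJ} P^{-1}$ and $\int_0^t e^{sB} Q e^{sB^\star}\, ds = P \left(\int_0^t e^{sJ} \tilde Q e^{sJ^\star}\, ds\right) P^\star$ for $\tilde Q = P^{-1} Q (P^{-1})^\star \geq 0$, so $\det(tK(t)) = (\det P)^2 \det\!\left(\int_0^t e^{sJ}\tilde Q e^{sJ^\star}\, ds\right)$, and it suffices to bound the latter.

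\textbf{Key steps.} First I would use monotonicity: for $t_1 \leq t_2$ one has $\int_0^{t_2} e^{sB} Q e^{sB^\star}\, ds \geq \int_0^{t_1} e^{sB} Q e^{sB^\star}\, ds$ in the sense of symmetric matrices, hence $\det(t_2 K(t_2)) \geq \det(t_1 K(t_1))$ — the determinant of $tK(t)$ is nondecreasing. For part (i), combine this with the following scaling/invariance observation exploiting $\tr B \geq 0$: by a change of variable $s \mapsto \lambda s$ in the integral (or by the known identity relating $K(\lambda t)$ to $K(t)$ via $e^{\lambda s B}$), compare $\det((t+1)K(t+1))$ with $\det(tK(t))$ plus a contribution from the layer $[t, t+1]$. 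On that layer $e^{sB} Q e^{sB^\star}$ stays uniformly elliptic on the subspace it generates; more precisely, using Liouville's formula $\det e^{sB} = e^{s\,\tr B} \geq 1$ for $s \geq 0$, one shows that the incremental matrix $\int_t^{t+1} e^{sB}Qe^{sB^\star}\,ds$ has determinant (on its range) not decaying, which forces the growth of $\det(tK(t))$ to be at least linear. Concretely, pick $N$ "directions" — since $K(1) > 0$, there are $N$ linearly independent vectors realised inside $\int_0^1 e^{sB}Qe^{sB^\star}\,ds$; translating the integration window by integer amounts and using $\det e^{kB} \geq 1$ produces, after summing over $k = 0, 1, \dots, \lfloor t \rfloor$, a lower bound of the form $c\, t$ (at least one eigenvalue of $tK(t)$ grows linearly while the others stay bounded below). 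For part (ii), pick a (possibly complex) eigenvector $v$ of $B^\star$ with $B^\star v = \lambda v$, $\Re\lambda = L_0$; then $e^{sB^\star} v = e^{s\lambda} v$, so $\langle e^{sB} Q e^{sB^\star} v, v\rangle = e^{2 s L_0} \langle Q v, v\rangle$ when $\lambda$ is real (and an analogous real-part computation, using a two-dimensional invariant subspace, when $\lambda$ is complex). If $\langle Qv, v\rangle > 0$ one immediately gets that one eigenvalue of $\int_0^t e^{sB}Qe^{sB^\star}\,ds$ is $\gtrsim e^{2L_0 t}$; combined with the uniform lower bound $\det(tK(t)) \geq \det(K(1)) > 0$ on the complementary directions (again via monotonicity), this yields $\det(tK(t)) \gtrsim e^{2L_0 t}$ and hence $V(t) \gtrsim e^{L_0 t}$. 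If instead $\langle Qv, v\rangle = 0$ for every top eigenvector, H\"ormander's nondegeneracy still forces $Q$ to "see" the top eigenspace through iterated brackets: one uses that $K(t) > 0$ means $\operatorname{span}\{B^k Q^{1/2} e_j\}$ is all of $\R^N$, so there is a smallest integer $m$ with $\langle Q B^{*m} v, B^{*m} v\rangle > 0$ or more robustly one tests $K(t)$ against the vector $w = B^{*m} v$ and extracts the leading term $\sim e^{2 s L_0}$ in $\langle e^{sB}Qe^{sB^\star} w, w\rangle$ as $s \to \infty$, which suffices.

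\textbf{Main obstacle.} The delicate point is part (i): getting \emph{linear}, not merely positive-bounded-below, growth of $\det(tK(t))$ purely from $\tr B \geq 0$, without any assumption that pushes an eigenvalue of $B$ to the right half-plane. The mechanism is that $\tr B \geq 0$ prevents the total volume from collapsing — $\det e^{sB} = e^{s\,\tr B}$ cannot decay — so the "fresh" contribution of each unit time-layer to $\det(tK(t))$ does not vanish; but turning this into a clean lower bound requires carefully choosing which $N-1$ directions stay controlled from below (using monotonicity and $K(1)>0$) while isolating one direction along which the layers accumulate. Handling the complex-eigenvalue case in part (ii) is a minor technical nuisance (rotation in a 2-plane, so one bounds a $2\times2$ minor rather than a single diagonal entry), but conceptually routine. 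Everything else — continuity and positivity on compacts, monotonicity, Liouville's formula — is standard.
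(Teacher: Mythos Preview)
The paper does not prove this proposition here; it is cited as \cite[Proposition~3.1]{GThls}. However, the proof of Proposition~\ref{P:boom} revisits that argument in detail, and the method is visible there: one puts $B^\star$ in real Jordan form, observes that $\tr\ge 0$ forces $L_0=\max\{\Re\lambda:\lambda\in\sigma(B)\}\ge 0$, and then performs a case analysis on the Jordan blocks associated with eigenvalues of nonnegative real part, in each case testing $tK(t)$ against a suitable (generalised) eigenvector of $B^\star$ to extract growth of one or more eigenvalues of $tK(t)$, while bounding the remaining eigenvalues below via the monotonicity $tK(t)\ge K(1)>0$. Your treatment of part~(ii) follows exactly this line; your worry about the case $\langle Qv,v\rangle=0$ is in fact moot, since the H\"ormander condition directly forces $v\notin\ker Q$ for \emph{every} eigenvector $v$ of $B^\star$, so $\langle Qv,v\rangle>0$ always and no iterated-bracket argument is needed.

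Part~(i), however, has a genuine quantitative gap. Your stated mechanism, ``at least one eigenvalue of $tK(t)$ grows linearly while the others stay bounded below'', yields only $\det(tK(t))\gtrsim t$ and hence $V(t)\gtrsim\sqrt{t}$ --- this is the weaker bound of Proposition~\ref{P:boom}, not the claimed $V(t)\gtrsim t$, which requires $\det(tK(t))\gtrsim t^2$. The Liouville-based layer heuristic does not supply the missing factor as written. The hypothesis $N\ge 2$, which you never invoke, is precisely what closes the gap: combined with $\tr\ge 0$ it guarantees that the eigenvalues of $B$ with $\Re\lambda\ge 0$ are at least two in number (with multiplicity) or carry nontrivial Jordan or rotational structure, and the case analysis then always produces $\det(tK(t))\gtrsim t^2$. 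Alternatively, your layer decomposition can be rescued via Minkowski's determinant inequality: each layer $e^{kB}K(1)e^{kB^\star}$ has determinant $e^{2k\,\tr}\det K(1)\ge\det K(1)$, whence $(\det tK(t))^{1/N}\gtrsim \lfloor t\rfloor(\det K(1))^{1/N}$ and so $\det(tK(t))\gtrsim t^N\ge t^2$ for $N\ge 2$.
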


For the purpose of this paper, we will need the following improvement of Proposition \ref{P:boom0} which is valid for any $N\geq 1$ and also encompasses the case $\tr<0$.

\begin{proposition}\label{P:boom}
If $\max\{\Re(\lambda)\mid \lambda\in \sigma(B)\}\geq 0$, then there exists a constant $c_0>0$ such that 
\begin{equation}\label{sqrtbound}
V(t)\geq c_0 \sqrt{t} \quad\mbox{for all }t\geq 1.
\end{equation}
If instead $\max\{\Re(\lambda)\mid \lambda\in \sigma(B)\}< 0$, then as $t\nearrow \infty$ we have
\begin{equation}\label{measure}
tK(t)\nearrow K_\infty\overset{def}{=}\int_0^\infty e^{sB} Q e^{s B^\star} ds <\infty.
\end{equation}
\end{proposition}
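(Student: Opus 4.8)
Throughout I would set $M(t):= tK(t) = \int_0^t e^{sB}Qe^{sB^\star}\,ds$, so that $V(t)=\omega_N\sqrt{\det M(t)}$ by \eqref{VS}; by Hörmander's hypothesis $M(t)>0$ for every $t>0$, and since $M'(t)=e^{tB}Qe^{tB^\star}\ge0$ the family $M(t)$ is nondecreasing in the sense of quadratic forms, so in particular $M(t)\ge M(1)$ for $t\ge1$. With this in hand the second assertion is immediate: if $\max\{\Re(\lambda)\mid\lambda\in\sigma(B)\}<0$, then $\|e^{sB}\|\le Ce^{-\delta s}$ for some $C,\delta>0$ and all $s\ge 0$, hence $\|e^{sB}Qe^{sB^\star}\|\le C^2\|Q\|e^{-2\delta s}$ is integrable on $(0,\infty)$ and $K_\infty:=\int_0^\infty e^{sB}Qe^{sB^\star}\,ds$ is a well-defined finite symmetric matrix; since each integrand $e^{sB}Qe^{sB^\star}$ is positive semidefinite (because $Q\ge0$), the partial integrals $M(t)$ increase to $K_\infty$ in the sense of quadratic forms, which is precisely \eqref{measure}.

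For the first assertion, I note that $V(t)\ge c_0\sqrt t$ is equivalent to $\det M(t)\ge c\,t$, and I would prove the latter in four steps. \textbf{Step 1 (reduction to one direction).} For $t\ge1$ all eigenvalues of $M(1)^{-1/2}M(t)M(1)^{-1/2}$ are $\ge1$, so its determinant — which equals $\det M(t)/\det M(1)$ — is at least its largest eigenvalue, and the latter dominates the Rayleigh quotient $v^\star M(t)v/(v^\star M(1)v)$ for any fixed $v\neq0$. Hence it suffices to produce a single vector $v_0\neq0$ with
\[
v_0^\star M(t)v_0=\int_0^t\big(e^{sB^\star}v_0\big)^\star Q\,\big(e^{sB^\star}v_0\big)\,ds\ \ge\ c'\,t \qquad\text{for all large } t.
\]
\textbf{Step 2 (choice of $v_0$).} Since $\sigma(B^\star)=\sigma(B)$ contains an eigenvalue $\mu$ with $\Re\mu=L_0:=\max\{\Re(\lambda)\mid\lambda\in\sigma(B)\}\ge0$, I would pick a real $B^\star$-invariant subspace $W$ of dimension $1$ or $2$ with $\sigma(B^\star|_W)\subseteq\{\mu,\bar\mu\}$ (the line spanned by a real eigenvector if $\mu\in\R$, the real span of a complex eigenvector otherwise), and take any $v_0\in W\setminus\{0\}$. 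Then $B^\star|_W=L_0 I_W$ in the real case, while $B^\star|_W=L_0 I_W+\beta\tilde N$ with $\beta=\Im\mu\neq0$ and $\tilde N^2=-I_W$ in the complex case; in either case $e^{sB^\star}v_0=e^{L_0 s}O(s)v_0$, with $O(s)\equiv I_W$, respectively $O(s)=\cos(\beta s)I_W+\sin(\beta s)\tilde N$, periodic of period $2\pi/|\beta|$.

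\textbf{Step 3 (nondegeneracy).} Writing $h(s):=\big(O(s)v_0\big)^\star Q\,\big(O(s)v_0\big)\ge0$ — a continuous function, and periodic in the complex case — we have $\big(e^{sB^\star}v_0\big)^\star Q\,\big(e^{sB^\star}v_0\big)=e^{2L_0 s}h(s)$, and $h\not\equiv0$: otherwise $v_0^\star K(1)v_0=\int_0^1 e^{2L_0 s}h(s)\,ds=0$, contradicting $K(1)>0$. \textbf{Step 4 (conclusion).} Since $L_0\ge0$ one has $e^{2L_0 s}h(s)\ge h(s)$, so $v_0^\star M(t)v_0\ge\int_0^t h(s)\,ds$; a nonnegative, nonzero, periodic (or constant) integrand makes $\int_0^t h(s)\,ds$ grow linearly in $t$, which is exactly what Step 1 needs. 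Feeding this back gives $\det M(t)\ge c\,t$ for $t$ beyond some $t_1\ge1$, and since $V$ is continuous and strictly positive on the compact interval $[1,t_1]$ it is bounded below there; after shrinking $c_0$ the bound \eqref{sqrtbound} then holds for all $t\ge1$.

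The one genuinely structural point — the "hard part", such as it is — is Step 3: one must rule out that the (possibly degenerate) quadratic form $x\mapsto x^\star Qx$ annihilates the non-decaying part of the flow $e^{sB^\star}$, and this is precisely where Hörmander's nondegeneracy assumption enters, in the minimal form $K(1)>0$. Everything else is soft linear algebra. I would also point out that linear growth of $\det M(t)$ is the best one can expect in general, and that the argument uses neither $N\ge2$ nor any sign restriction on $\operatorname{tr}B$ — which is exactly what makes it an improvement of Proposition \ref{P:boom0}.
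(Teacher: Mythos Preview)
Your proof is correct, and it takes a cleaner route than the paper's. The paper reduces $B^\star$ to real Jordan canonical form and then argues case by case according to the block type carrying the eigenvalue of maximal real part (positive real, positive complex, zero complex, zero real with block size $\ge2$, zero real with block size $1$); all but the last of these cases are not argued in the paper itself but imported from \cite{GThls}, and only the remaining simple case---a real eigenvector in $\ker B^\star$---is treated directly, via exactly your ``one good direction plus all other eigenvalues bounded below by those of $K(1)$'' mechanism. By contrast, you bypass the Jordan form entirely: picking a single (real or complex) eigenvector for an eigenvalue with $\Re\mu=L_0$ gives an orbit $e^{sB^\star}v_0=e^{L_0 s}O(s)v_0$ with $O(s)$ constant or periodic, so the integrand $h(s)=(O(s)v_0)^\star Q\,O(s)v_0$ is handled uniformly and its nonvanishing is read off from $K(1)>0$. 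Your Step~1 reduction via $M(1)^{-1/2}M(t)M(1)^{-1/2}$ is the same idea as the paper's $\det(tK(t))\ge\lambda_1^{N-1}\lambda_M(t)$, just packaged more invariantly. The trade-off is that the paper's case analysis (together with the cited proposition) actually yields sharper lower bounds in the sub-cases---exponential growth when $L_0>0$, $t^2$ or $t^3$ in some zero-real-part situations---whereas your argument uniformly produces only the linear bound $\det M(t)\gtrsim t$, which is exactly what \eqref{sqrtbound} requires and, as you note, is optimal in general. For part \eqref{measure} the two arguments are essentially identical.
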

\begin{proof}
Without loss of generality we can assume, up to a change of variables in $\R^N$, that the matrix $B^*$ is in the following block-diagonal real Jordan canonical form
$$B^\star=\begin{pmatrix}
J_{n_1}(\lambda_1) &  &  &  &  &  & 
\\
 & \ddots &  &  & 0 &   & 
\\
 &  & J_{n_q}(\lambda_q) &  &  &  & \\
 &  &  & C_{m_1}(a_1,b_1) &  &  & 
\\
 &  & 0 &  &  & \ddots & 
\\
 &  &  &  &  &   & C_{m_p}(a_p,b_p)
\end{pmatrix},$$
where $\sigma(B)=\sigma(B^\star)=\{\lambda_1,\ldots,\lambda_q,a_1\pm ib_1,\ldots, a_p\pm ib_p\}$ with $\lambda_k, a_\ell, b_\ell\in \R$ ($b_\ell\neq 0$), $n_1+\ldots+n_q+2m_1+\ldots+2m_p=N$ with $n_k, m_\ell\in \N$, and the $n_k\times n_k$ matrix $J_{n_k}(\lambda_k)$ and the $2m_\ell\times 2m_\ell$ matrix $C_{m_\ell}(a_\ell,b_\ell)$ are respectively in the form
$$J_{n_k}(\lambda_k)=\begin{pmatrix}
\lambda_k & 1 & 0 & \ldots & 0  \\
0 & \lambda_k & 1 & \ldots & 0  \\
0 & 0 & \ddots & \ddots & 0  \\
0 & \ldots & 0 & \lambda_k & 1  \\
0 & 0 & \ldots & 0 & \lambda_k 
\end{pmatrix},\quad C_{m_\ell}(a_\ell,b_\ell)=\begin{pmatrix}
a_\ell & -b_\ell & 1 & 0 & 0 & \ldots & \ldots & 0  \\
b_\ell & a_\ell & 0 & 1 & 0 & \ldots & \ldots & 0  \\
0 & 0 & a_\ell & -b_\ell & 1 & 0 & \ldots & 0  \\
0 & 0 & b_\ell & a_\ell & 0 & 1 & \ldots & 0 \\
\vdots & \vdots & 0 & 0 & \ddots & \ddots & 1 & 0 \\
\vdots & \vdots & 0 & 0 & \ddots & \ddots & 0 &  1\\
0 & 0 & \ldots & \ldots & 0 & 0 & a_\ell & -b_\ell \\
0 & 0 & \ldots & \ldots & 0 & 0 & b_\ell & a_\ell \\
\end{pmatrix}.
$$
In these notations, the two mutually exclusive possibilities $\max\{\Re(\lambda)\mid \lambda\in \sigma(B)\}\geq 0$ and $\max\{\Re(\lambda)\mid \lambda\in \sigma(B)\}< 0$ respectively correspond  to the the following conditions:
\begin{itemize}
\item[(a)] there is at least one $k_0\in\{1,\ldots,q\}$ such that $\lambda_{k_0}\geq 0$, or at least one $\ell_0\in\{1,\ldots,p\}$ such that $a_{\ell_0}\geq 0$;
\item[(b)] for every $k\in\{1,\ldots,q\}$ and $\ell\in\{1,\ldots,p\}$ we have $\lambda_{k},\,a_{\ell}<0$.
\end{itemize}
Suppose at first that case $(a)$ occurs. A thorough review of the proof of \cite[Proposition 3.1]{GThls} tells us that, regardless of the sign assumption on $\tr$, the following holds:
\begin{itemize}
\item[-] if there exists $\ell_0\in\{1,\ldots,p\}$ such that $a_{\ell_0}> 0$ then, for some $C_+>0$, we have $\operatorname{det}\left(tK(t)\right)\geq C_+ e^{2a_{\ell_0} t}$ for all $t\geq 1$;
\item[-] if there exists $k_0\in\{1,\ldots,q\}$ such that $\lambda_{k_0}> 0$ then, for some $C_+>0$, we have $\operatorname{det}\left(tK(t)\right)\geq C_+ e^{2\lambda_{k_0} t}$ for all $t\geq 1$;
\item[-] if there exists $\ell_0\in\{1,\ldots,p\}$ such that $a_{\ell_0}= 0$ then, for some $C_0>0$, we have $\operatorname{det}\left(tK(t)\right)\geq C_0 t^2$ for all $t\geq 1$;
\item[-] if there exists $k_0\in\{1,\ldots,q\}$ with $n_{k_0}\geq 2$ such that $\lambda_{k_0}= 0$ then, for some $C_0>0$, we have $\operatorname{det}\left(tK(t)\right)\geq C_0 t^3$ for all $t\geq 1$.
\end{itemize}
Being in case $(a)$, the only possibility which is left out from the analysis of the previous list is the following:
\begin{equation}\label{leftout}
\mbox{suppose there exists }k_0\in\{1,\ldots,q\}\mbox{ with }n_{k_0}=1\mbox{ such that }\lambda_{k_0}= 0.
\end{equation}
Under assumption \eqref{leftout}, we know there exists a vector $v_0\in\RN$, with $|v_0| = 1$, which is in the kernel of $B^*$ (i.e., an eigenvector with eigenvalue $\lambda_{k_0}= 0$). From the H\"ormander condition (see \cite[Proposition 2.12]{GThls}) we deduce that $v_0\notin \operatorname{Ker} Q$, that is $\langle Q v_0,v_0\rangle >0$ holds true. Therefore, denoting by $\lambda_M(t)$ the largest eigenvalue of $tK(t)$, we obtain
$$\lambda_M(t) \geq \langle tK(t) v_0,v_0\rangle =\int_0^t \langle Q e^{sB^\star}v_0, e^{sB^\star}v_0\rangle ds = \int_0^t \langle Q v_0,v_0\rangle ds=t \langle Q v_0,v_0\rangle.$$
On the other hand, since $t\mapsto tK(t)$ is monotone increasing in the sense of matrices (recall \eqref{Kt}), for $t\geq 1$ all the eigenvalues of $tK(t)$ are larger than the minimum eigenvalue of $K(1)$ which is strictly positive by H\"ormander condition and can be denoted by $\lambda_1$: from this fact we infer that
$$
\operatorname{det}\left(tK(t)\right)\geq (\lambda_1)^{N-1} \lambda_M(t)\geq (\lambda_1)^{N-1}\left\langle Q v_0,v_0\right\rangle t.
$$
If we put together all the previous information concerning the lower bound for $\operatorname{det}\left(tK(t)\right)$, we conclude that in case $(a)$ we have
$$\operatorname{det}\left(tK(t)\right)\gtrsim t\quad\mbox{ for }t\geq 1.$$
By recalling the definition of $V(t)$ in \eqref{VS}, this implies the validity of \eqref{sqrtbound} for some constant $c_0>0$.

Suppose now that case $(b)$ occurs. The conclusion in \eqref{measure} is known, see e.g. \cite[Section 6]{DZ}. For the reader's convenience, we provide a quick proof adapted to our setting. Since $t\mapsto tK(t)$ is monotone increasing and positive definite, to establish \eqref{measure} it suffices to prove that $\left\langle tK(t) v,v\right\rangle$ is bounded above uniformly in $t$, for every unit vector $v\in\RN$. With this objective in mind, suppose that we knew that there exist constants $\alpha, C_B>0$  such that for all $t\ge 0$
\begin{equation}\label{vanishB}
\|e^{tB^*}\|\leq C_B e^{- \alpha t}.
\end{equation}
Then, denoting by $\Lambda_Q$ the largest eigenvalue of the matrix $Q$, for any $v$ with $|v|=1$ and for all $t$ we would have from \eqref{vanishB}
$$\left\langle tK(t) v,v\right\rangle \leq \Lambda_Q\int_0^t |e^{sB^*}v|^2 ds\leq \Lambda_Q\int_0^{\infty}\|e^{sB^*}\|^2 ds< \infty.$$
To complete the proof of part (b) we are thus left with showing \eqref{vanishB}. This estimate can be showed by verifying that
$$e^{t J_{n_k}(\lambda_k)}=e^{\lambda_k t}\begin{pmatrix}
1 & t & \frac{t^2}{2} & \ldots & \frac{t^{n_k-1}}{(n_k-1)!}  \\
0 & 1 & t & \ldots & \frac{t^{n_k-2}}{(n_k-2)!}  \\
0 & 0 & \ddots & \ddots & \vdots  \\
0 & \ldots & 0 & 1 & t  \\
0 & 0 & \ldots & 0 & 	1 
\end{pmatrix}$$
and
$$e^{t C_{m_\ell}(a_\ell,b_\ell)}=e^{a_\ell t}\begin{pmatrix}
R_{t b_\ell} & tR_{t b_\ell} & \frac{t^2}{2}R_{t b_\ell} & \ldots & \frac{t^{m_\ell-1}}{(m_\ell-1)!}R_{t b_\ell}  \\
0 & R_{t b_\ell} & tR_{t b_\ell} & \ldots & \frac{t^{m_\ell-2}}{(m_\ell-2)!}R_{t b_\ell}  \\
0 & 0 & \ddots & \ddots & \vdots  \\
0 & \ldots & 0 & R_{t b_\ell} & tR_{t b_\ell}  \\
0 & 0 & \ldots & 0 & 	R_{t b_\ell} 
\end{pmatrix},$$
where $R_{t b_\ell}= \begin{pmatrix}
\cos{(tb_\ell)} & -\sin{(tb_\ell)}   \\
\sin{(tb_\ell)} & \cos{(tb_\ell)}
\end{pmatrix}$.
Then, for any block $B^*_j$ of $B^*$ (either of type $J_{n_k}(\lambda_k)$ or  $C_{m_\ell}(a_\ell,b_\ell)$), one has
$$
\|e^{tB_j^*}\|\lesssim t^{d_j} e^{-L_j t}\quad \mbox{ for }t\geq 1,
$$
where $d_j\geq 0$ is a suitable power and $L_j$ is strictly positive (because all the $\lambda_k, a_\ell$ are strictly
negative). This implies the validity of \eqref{vanishB}.

\end{proof}

The expression in \eqref{PtKt} trivially implies an upper bound $|P_t f(X)|\leq \frac{c_N}{V(t)}$. Hence, if we assume $\max\{\Re(\lambda)\mid \lambda\in \sigma(B)\}\geq 0$, the rate of blowup for $V(t)$ that ensues from Proposition \ref{P:boom0} and \eqref{sqrtbound} of Proposition \ref{P:boom} provides us with a critical information on the rate of vanishing of the semigroup $P_t$ as $t\to \infty$. What is left out is the situation in which $\max\{\Re(\lambda)\mid \lambda\in \sigma(B)\}< 0$. In the next result we show that, in this case, $P_t f$ converges as $t\to \infty$ with an exponential rate to the average of $f$ with respect to the invariant Gaussian measure. 

\begin{proposition}\label{equilibrium}
Assume $\max\{\Re(\lambda)\mid \lambda\in \sigma(B)\}< 0$. Then, for every $f\in \So$ and $X\in \RN$, there exists a $C_{f,X}>0$ such that for all $t\geq 1$,
\begin{equation}\label{expoconv}
\left|P_t f(X) - \frac{(4\pi)^{-\frac N2}}{\sqrt{{\rm{det}}\ K_\infty}}\int_{\RN} f(Y)e^{-\frac{<K^{-1}_\infty Y,Y>}{4}}dY\right|\leq C_{f,X} e^{-\alpha t},
\end{equation}
where $\alpha>0$ is the constant in \eqref{vanishB}.
\end{proposition}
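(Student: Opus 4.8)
The plan is to use the explicit Gaussian expression \eqref{PtKt} for the kernel $p(X,Y,t)$ together with the exponential convergences $e^{tB}\to 0$ and $tK(t)\to K_\infty$ that underlie (the proof of) Proposition~\ref{P:boom}. Since $c_N=\omega_N(4\pi)^{-N/2}$ and $V(t)=\omega_N(\det(tK(t)))^{1/2}$, setting $A(t)=tK(t)$, $b(t)=e^{tB}X$ and, for a symmetric positive definite matrix $A$, $g_A(z)=(4\pi)^{-N/2}(\det A)^{-1/2}\exp\left(-\tfrac14\langle A^{-1}z,z\rangle\right)$, one rewrites $P_t f(X)=\int_{\RN} g_{A(t)}(Y-b(t))\,f(Y)\,dY$, while the quantity inside the absolute value in \eqref{expoconv} is precisely $P_t f(X)-\int_{\RN} g_{K_\infty}(Y)\,f(Y)\,dY$ (note that $\int g_{K_\infty}=1$, though this will not be used). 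Two decay rates drive the argument. First, since the spectral norm is invariant under transposition, \eqref{vanishB} gives $\|e^{tB}\|=\|e^{tB^\star}\|\le C_B e^{-\alpha t}$, hence $|b(t)|\le C_B e^{-\alpha t}|X|$. Second, $K_\infty-A(t)=\int_t^\infty e^{sB}Qe^{sB^\star}\,ds$ by \eqref{Kt} and \eqref{measure}, so $\|K_\infty-A(t)\|\le \|Q\|\,C_B^2\int_t^\infty e^{-2\alpha s}\,ds\le C e^{-2\alpha t}$. Finally, $A(t)=\int_0^t e^{sB}Qe^{sB^\star}\,ds$ is monotone nondecreasing in $t$ in the sense of symmetric matrices, so for $t\ge1$ one has $K(1)\le A(t)\le K_\infty$; in particular the eigenvalues of $A(t)$, and of every convex combination of $A(t)$ and $K_\infty$, lie in a fixed compact interval $[\mu_0,\mu_1]\subset(0,\infty)$, uniformly in $t\ge1$, so that all Gaussian estimates below are uniform.

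With this in place, I would split
\[
P_t f(X)-\int_{\RN} g_{K_\infty}f\,dY = \int_{\RN} f(Y)\big(g_{A(t)}(Y-b(t))-g_{K_\infty}(Y-b(t))\big)\,dY + \int_{\RN} f(Y)\big(g_{K_\infty}(Y-b(t))-g_{K_\infty}(Y)\big)\,dY =: I+II,
\]
and estimate the two pieces. For $II$, the mean value theorem in the space variable, the Gaussian decay of $\nabla g_{K_\infty}$, and the fact that $|b(t)|$ is uniformly bounded for $t\ge1$ yield $|g_{K_\infty}(Y-b(t))-g_{K_\infty}(Y)|\le C_X\,|b(t)|\,e^{-\delta|Y|^2}$ for suitable $\delta,C_X>0$; since $f\in\So$, integrating against $|f|$ gives $|II|\le C_{f,X}\,|b(t)|\le C_{f,X}e^{-\alpha t}$. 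For $I$, I would interpolate along $A_\tau=(1-\tau)A(t)+\tau K_\infty$, $\tau\in[0,1]$, which stays in the order interval $[K(1),K_\infty]$, and differentiate $g_A$ in the matrix parameter: using $\partial_A A^{-1}[H]=-A^{-1}HA^{-1}$ and $\partial_A(\det A)^{-1/2}[H]=-\tfrac12(\det A)^{-1/2}\operatorname{tr}(A^{-1}H)$, the derivative is controlled on $[K(1),K_\infty]$ and one obtains $|g_{A(t)}(z)-g_{K_\infty}(z)|\le C\,\|K_\infty-A(t)\|\,(1+|z|^2)\,e^{-|z|^2/(4\mu_1)}$ uniformly for $t\ge1$. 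Combined with $\|K_\infty-A(t)\|\le Ce^{-2\alpha t}$, the boundedness of $|b(t)|$ (so $g_{A(t)}(Y-b(t))$, $g_{K_\infty}(Y-b(t))$ retain uniform Gaussian decay in $Y$), and $f\in\So$, this gives $|I|\le C_{f,X}e^{-2\alpha t}$. Adding the two bounds and using $e^{-2\alpha t}\le e^{-\alpha t}$ proves \eqref{expoconv}.

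The only mildly delicate point is the uniform-in-$t$ control of $g_{A(t)}(z)-g_{K_\infty}(z)$ in the term $I$; if one prefers to avoid the matrix differentiation, one can instead write
\[
\log g_{A(t)}(z)-\log g_{K_\infty}(z)=-\tfrac12\big(\log\det A(t)-\log\det K_\infty\big)-\tfrac14\big\langle(A(t)^{-1}-K_\infty^{-1})z,z\big\rangle,
\]
bound both terms by $Ce^{-2\alpha t}(1+|z|^2)$ via $\|A(t)^{-1}-K_\infty^{-1}\|\le\mu_0^{-2}\|K_\infty-A(t)\|$ and the smoothness of $\log\det$ on $[K(1),K_\infty]$, and then combine this with the uniform Gaussian bounds $g_{A(t)}(z),\,g_{K_\infty}(z)\le Ce^{-|z|^2/(4\mu_1)}$ through $|e^a-e^b|\le(e^{|a-b|}-1)\max(e^a,e^b)$, distinguishing if necessary the regimes $e^{-2\alpha t}(1+|z|^2)\le1$ and $e^{-2\alpha t}(1+|z|^2)>1$ (in the latter the crude Gaussian bound already beats $e^{-\alpha t}$ for $t$ large). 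Everything else reduces to convergent Gaussian integrals, uniformly in $t\ge1$ because $|b(t)|$ stays bounded.
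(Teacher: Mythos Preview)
Your proof is correct and takes a genuinely different route from the paper's. The paper reparametrises time by $\rho=e^{-\alpha t}$, defines $g_X(\rho)=P_{t(\rho)}f(X)$ with $g_X(0)=m_\infty(f)$, and applies the scalar mean value theorem to get $|P_t f(X)-m_\infty(f)|\le e^{-\alpha t}\sup|g_X'|$; the bulk of the work is then an explicit computation of $\partial_t p(X,Y,t)$, which produces three terms (a trace term, a quadratic-form term, and a shift term) each bounded using \eqref{vanishB} and the matrix inequality $tK(t)\ge K(1)$. You instead decompose the error according to the two ``moving parts'' of the Gaussian kernel, the centre $b(t)=e^{tB}X$ and the covariance $A(t)=tK(t)$, and apply separate mean value arguments to each piece. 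Your term $II$ corresponds exactly to the paper's shift term, while your term $I$ packages together the paper's trace and quadratic-form terms through the matrix-parameter differentiation of $g_A$. The uniform order-interval bound $K(1)\le A(t)\le K_\infty$ you use plays the same role as the paper's repeated appeals to $\tau K(\tau)\ge K(1)$.

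What each approach buys: your decomposition makes transparent that the $e^{-\alpha t}$ rate is forced by the shift $b(t)$ (the covariance actually converges at the faster rate $e^{-2\alpha t}$), a point which is implicit but less visible in the paper's computation. The paper's single time-differentiation is perhaps more mechanical, avoiding the interpolation along the matrix path, but at the cost of a longer explicit formula for $\partial_t p$. Both arguments rest on the same two quantitative inputs, \eqref{vanishB} and \eqref{measure}, so neither is materially shorter or more general.
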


\begin{proof}
Take $f\in\So$ and denote
$$
m_\infty(f)=\frac{(4\pi)^{-\frac N2}}{\sqrt{{\rm{det}}\ K_\infty}}\int_{\RN} f(Y)e^{-\frac{<K^{-1}_\infty Y,Y>}{4}}dY.
$$
We first note that, for any $X\in\RN$,
\begin{equation}\label{continfty}
P_t f(X)\longrightarrow m_\infty(f)\quad\mbox{ as }t\to \infty.
\end{equation}
To prove \eqref{continfty} we observe that, as a consequence of \eqref{PtKt}, \eqref{m}, \eqref{measure} and \eqref{vanishB}, we have for any $X,Y\in\RN$, 
$$p(X,Y,t)=\frac{(4\pi)^{-\frac N2}}{\sqrt{{\rm{det}}\left(tK(t)\right)}}e^{-\frac{<K(t)^{-1}(Y-e^{tB} X ),Y-e^{tB} X >}{4t}}\ \underset{t\to \infty}{\longrightarrow}\ \frac{(4\pi)^{-\frac N2}}{\sqrt{{\rm{det}}\ K_\infty}}e^{-\frac{<K^{-1}_\infty Y,Y>}{4}}.$$
This limit, and Lebesgue dominated converge theorem, imply \eqref{continfty} once we observe that for $t\geq 1$ one has
$$|p(X,Y,t) f(Y)|\leq \frac{(4\pi)^{-\frac N2}}{\sqrt{{\rm{det}}\ K(1)}}|f(Y)|\in L^1.$$
Now, fix $X\in\RN$ and let $\alpha>0$ be the constant in \eqref{vanishB}. With $t(\rho) = \frac 1\alpha \log \frac 1\rho$, we now define a function $g_{X}:(0,1)\to \R$ by the formula 
\[
g_X(\rho) = \begin{cases}
m_\infty(f),\ \ \ \ \ \ \ \ \ \ \rho = 0,
\\
P_{t(\rho)} f(X),\ \ \ \ \ \ 0<\rho<1.
\end{cases}
\]
Thanks to \eqref{continfty} the function $g_{X}(\rho)$ is continuous up to $\rho=0$. Moreover, for $f\in\So$ the chain rule gives for any $\rho\in (0,1)$
$$g'_{X}(\rho)=-\frac{1}{\alpha \rho}\int_{\RN}\frac{\de p}{\de t}\left(X,Y,t(\rho)\right)f(Y) dY.$$
By the mean value theorem we thus find for all $t\geq 1$,
\begin{align*}
&\left|P_t f(X) - m_\infty(f)\right|=\left|g_{X}(e^{-\alpha t}) -g_{X}(0)\right|\leq e^{-\alpha t} \sup_{\rho\in (0, e^{-\alpha t})}\left|g'_{X}(\rho)\right|\\
&\leq \frac{e^{-\alpha t}}{\alpha} \sup_{\tau\geq 1} \int_{\RN}e^{\alpha \tau}\left|\frac{\de p}{\de \tau}(X,Y,\tau)\right||f(Y)| dY.
\end{align*}
To complete the proof of \eqref{expoconv} we will show that there exists $C>0$ (depending on $f$ and $X$) such that
\begin{equation}\label{bounderiv}
\sup_{\tau\geq 1} \int_{\RN}e^{\alpha \tau}\left|\frac{\de p}{\de \tau}(X,Y,\tau)\right||f(Y)| dY \le C.
\end{equation}
The identity $ \frac{d}{d\tau}(\tau K(\tau))= e^{\tau B} Q e^{\tau B^*}$ and a direct computation show that
\begin{align*}
\frac{\de p}{\de \tau}(X,Y,\tau)=p(X,Y,\tau)&\bigg(-\frac{1}{2}{\rm{tr}}\left(e^{\tau B} Q e^{\tau B^*} \left(\tau K(\tau)\right)^{-1}\right)
\\
&+ \frac{1}{4}<e^{\tau B} Q e^{\tau B^*}\left(\tau K(\tau)\right)^{-1}\left(Y-e^{\tau B}X\right),\left(\tau K(\tau)\right)^{-1}\left(Y-e^{\tau B}X\right)>
\\
& + \frac{1}{2}<\left(\tau K(\tau)\right)^{-1}\left(Y-e^{\tau B}X\right),e^{\tau B} BX>\bigg).
\end{align*}
We are going to estimate separately the three terms appearing in the right-hand side of the latter identity using the following facts: (a) the matrix inequality $\tau K(\tau)\geq K(1)>0$ for $\tau\geq 1$; (b) the fact that the largest eigenvalue of the nonnegative matrix $e^{\tau B} Q e^{\tau B^*}$ is smaller than $\Lambda_Q \|e^{\tau B^*}\|^2$ (where $\Lambda_Q$ denotes the largest eigenvalue of $Q$); and (c) the key exponential decay established in \eqref{vanishB}.
 We thus obtain for all $\tau\geq 1$,
$$
0<{\rm{tr}}\left( e^{\tau B} Q e^{\tau B^*} \left(\tau K(\tau)\right)^{-1}\right)\leq \Lambda_Q \|e^{\tau B^*}\|^2{\rm{tr}}\left(K^{-1}(1)\right)\leq C^2_B\Lambda_Q{\rm{tr}}\left(K^{-1}(1)\right) e^{-2\alpha\tau}.
$$
Secondly, for all $\tau\geq 1$ we have
\begin{align*}
&0\leq <e^{\tau B} Q e^{\tau B^*}\left(\tau K(\tau)\right)^{-1}\left(Y-e^{\tau B}X\right),\left(\tau K(\tau)\right)^{-1}\left(Y-e^{\tau B}X\right)>\\
&\leq \Lambda_Q \|e^{\tau B^*}\|^2 \left| \left(\tau K(\tau)\right)^{-1}\left(Y-e^{\tau B}X\right) \right|^2\leq \Lambda_Q \|e^{\tau B^*}\|^2\|K^{-1}(1)\|^2 \left| Y-e^{\tau B}X \right|^2\\
&\leq 2\Lambda_Q \|e^{\tau B^*}\|^2\|K^{-1}(1)\|^2 \left( |Y|^2 + \|e^{\tau B}\|^2|X|^2 \right)\\
&\leq 2\Lambda_Q C_B^2 \|K^{-1}(1)\|^2 \left( |Y|^2 + C_B^2|X|^2 \right) e^{-2\alpha\tau }.
\end{align*}
Finally, for $\tau\geq 1$ we bound the last term as follows
\begin{align*}
&\left|<\left(\tau K(\tau)\right)^{-1}\left(Y-e^{\tau B}X\right),e^{\tau B} BX>\right|\\
&\leq \|e^{\tau B^*}\|\left(\tau K(\tau)\right)^{-1}\left(Y-e^{\tau B}X\right) \left|\right| |BX|\leq \|e^{\tau B^*}\| \|K^{-1}(1)\|\left( |Y| + \|e^{\tau B}\| |X| \right) \|B\| |X|\\
&\leq \|K^{-1}(1)\|\|B\|( |Y| + C_B |X|)|X| e^{-\alpha\tau}.
\end{align*}
Inserting these three estimates in the above expression of $\frac{\de p}{\de \tau}(X,Y,\tau)$, we obtain for some $\bar{C}>0$ and all $\tau\geq 1$
\begin{align*}
&\left|\frac{\de p}{\de \tau}(X,Y,\tau)\right|\leq \frac{1}{2}p(X,Y,\tau)\left( C^2_B\Lambda_Q{\rm{tr}}\left(K^{-1}(1)\right) e^{-2\alpha\tau} +\right.\\
&\left. \Lambda_Q C_B^2 \|K^{-1}(1)\|^2 \left( |Y|^2 + C_B^2|X|^2 \right) e^{-2\alpha\tau } + \|K^{-1}(1)\|\|B\|( |Y| + C_B |X|)|X| e^{-\alpha\tau}\right)\\
&\leq \bar{C} \left( 1+  |Y|^2 + |X|^2\right)p(X,Y,\tau) e^{-\alpha\tau}.
\end{align*}
Using now \eqref{stochcompl} and the fact that $f\in\So$, we finally find for all $\tau\geq 1$,
\begin{align*}
&\int_{\RN}\left|\frac{\de p}{\de \tau}(X,Y,\tau)\right||f(Y)| dY \leq \bar{C} e^{-\alpha\tau}\int_{\RN} p(X,Y,\tau) \left( 1+  |Y|^2 + |X|^2\right) |f(Y)| dY\\
&\leq  e^{-\alpha\tau} \bar{C} \sup_{Y\in\RN}\left| \left( 1+  |Y|^2 + |X|^2\right) f(Y) \right|.
\end{align*}
This establishes \eqref{bounderiv} thus completing the proof of the lemma.

\end{proof}

Combining Propositions \ref{P:boom0}, \ref{P:boom} and \ref{equilibrium} with the case $p = \infty$ of Proposition \ref{P:ptq} we obtain a complete understanding of the pointwise behaviour of the semigroup $P_t f(X)$ as $t\to \infty$. It is interesting to notice how such behaviour depends in an essential way on the eigenvalues of the drift matrix $B$ in \eqref{K0}. In the same spirit, in the next result we analyse the pointwise limit as $s\to 0^+$ of the fractional powers \eqref{defpower}. In Section \ref{S:fracpow} this analysis will be complemented  by the study of the limiting behaviour in $L^p$ spaces of these nonlocal operators, under the assumption \eqref{trace}.

{\allowdisplaybreaks
\begin{proposition}\label{P:balapoint}
Let $f\in \mathscr S$ and $X\in\RN$. The following holds:
\begin{itemize}
\item[(i)] if $\max\{\Re(\lambda)\mid \lambda\in \sigma(B)\}\geq 0$, then one has
\[
\underset{s\to 0^+}{\lim}\ (-\sA)^s f(X) = f(X).
\]
\item[(ii)] if, on the other hand, $\max\{\Re(\lambda)\mid \lambda\in \sigma(B)\}< 0$, then one has
\[
\underset{s\to 0^+}{\lim}\ (-\sA)^s f(X) = f(X)-\frac{(4\pi)^{-\frac N2}}{\sqrt{{\rm{det}}\ K_\infty}}\int_{\RN} f(Y)e^{-\frac{<K^{-1}_\infty Y,Y>}{4}}dY.
\]
\end{itemize}
\end{proposition}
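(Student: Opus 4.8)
The plan is to split the integral in \eqref{defpower} at $t=1$ and analyse the two pieces separately. Since $\int_1^\infty t^{-1-s}\,dt = 1/s$ for $s>0$, and since that integral converges absolutely for $f\in\So$ and $s\in(0,1)$, one can rewrite
\begin{equation*}
(-\sA)^s f(X) = -\frac{s}{\Gamma(1-s)}\int_0^1 \frac{P_t f(X)-f(X)}{t^{1+s}}\,dt \;-\; \frac{s}{\Gamma(1-s)}\int_1^\infty \frac{P_t f(X)}{t^{1+s}}\,dt \;+\; \frac{f(X)}{\Gamma(1-s)}.
\end{equation*}
Since $\Gamma(1-s)\to 1$ as $s\to 0^+$, the last term already produces $f(X)$; the task is thus to show that the first integral always contributes $0$ in the limit, while the middle one contributes $0$ in case (i) and $-m_\infty(f)$ in case (ii), where $m_\infty(f)=\frac{(4\pi)^{-N/2}}{\sqrt{\det K_\infty}}\int_{\RN}f(Y)e^{-<K^{-1}_\infty Y,Y>/4}\,dY$ is the Gaussian average appearing in \eqref{expoconv}, i.e.\ the quantity subtracted on the right-hand side of (ii).

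For the integral over $(0,1)$, I would use that $u(X,t)=P_t f(X)$ solves the Cauchy problem for $\K$ with datum $f\in\So$, so that $\partial_t u = P_t(\sA f)$; since $\sA f\in\So$ and $\int_{\RN}p(X,Y,t)\,dY=1$ by \eqref{stochcompl}, this gives $|P_t f(X)-f(X)|=\big|\int_0^t P_\tau(\sA f)(X)\,d\tau\big|\leq t\,||\sA f||_\infty$ for $0<t<1$. Hence $\int_0^1 t^{-1-s}|P_t f(X)-f(X)|\,dt\leq ||\sA f||_\infty\,(1-s)^{-1}$, a bound uniform for $s\in(0,\tfrac12)$, and multiplying by the vanishing prefactor $s/\Gamma(1-s)$ shows this piece tends to $0$. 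Crucially, this step uses only the unconditional stochastic completeness \eqref{stochcompl}, not $L^\infty$-contractivity, so it does not require \eqref{trace}.

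For the integral over $(1,\infty)$ I separate the two cases. If $\max\{\Re(\lambda)\mid\lambda\in\sigma(B)\}\geq 0$, I would combine the pointwise bound $|P_t f(X)|\leq \frac{c_N}{V(t)}||f||_1$ coming from \eqref{PtKt} with the lower estimate $V(t)\geq c_0\sqrt t$ of Proposition \ref{P:boom} (the part valid for all $N\geq1$ and regardless of the sign of $\tr$), so that $\int_1^\infty t^{-1-s}|P_t f(X)|\,dt\leq C\int_1^\infty t^{-3/2}\,dt<\infty$ uniformly in $s\in(0,1)$; once more the prefactor $s/\Gamma(1-s)$ forces this term to $0$, whence $\lim_{s\to0^+}(-\sA)^s f(X)=f(X)$. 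If instead $\max\{\Re(\lambda)\mid\lambda\in\sigma(B)\}<0$, I would write $P_t f(X)=m_\infty(f)+(P_t f(X)-m_\infty(f))$ and invoke the exponential convergence $|P_t f(X)-m_\infty(f)|\leq C_{f,X}e^{-\alpha t}$ of Proposition \ref{equilibrium}: the remainder yields $\frac{s}{\Gamma(1-s)}\int_1^\infty t^{-1-s}(P_t f(X)-m_\infty(f))\,dt=O(s)\to0$, while the $m_\infty(f)$ part yields $\frac{s}{\Gamma(1-s)}m_\infty(f)\int_1^\infty t^{-1-s}\,dt=m_\infty(f)/\Gamma(1-s)\to m_\infty(f)$; collecting the three terms gives $\lim_{s\to0^+}(-\sA)^s f(X)=f(X)-m_\infty(f)$.

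I do not expect a serious obstacle: the whole proof reduces to the large-time behaviour of $P_t f(X)$, which has already been pinned down in Propositions \ref{P:boom} and \ref{equilibrium}, combined with the elementary short-time bound above and the fact that the outer factor $s/\Gamma(1-s)$ vanishes. The only points requiring care are that one must work without the hypothesis \eqref{trace} — hence rely on \eqref{stochcompl} and on the $\sqrt t$-lower bound of Proposition \ref{P:boom}, rather than on $L^\infty$-contractivity or on Proposition \ref{P:boom0} — and that one should check at the outset that the integral defining $(-\sA)^s f(X)$ is absolutely convergent, so that the splitting at $t=1$ is legitimate.
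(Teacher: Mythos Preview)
Your proof is correct and follows essentially the same route as the paper: split at $t=1$, control the short-time piece via $|P_t f(X)-f(X)|\le t\,\|\sA f\|_\infty$, and handle the long-time piece using the $\sqrt t$ lower bound on $V(t)$ from Proposition \ref{P:boom} in case (i) and the exponential convergence of Proposition \ref{equilibrium} in case (ii). The only cosmetic difference is that the paper extracts $f(X)$ via the identity $\frac{s}{\Gamma(1-s)}\int_0^\infty t^{-1-s}(1-e^{-t})\,dt=1$, whereas you use the more direct $\int_1^\infty t^{-1-s}\,dt=1/s$; your version is slightly cleaner since it avoids carrying the auxiliary $e^{-t}$ terms.
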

}

{\allowdisplaybreaks
\begin{proof}
To begin we recall that, for functions $f\in\So$, the definition of the fractional powers $\As f (X)$ in \eqref{defpower} makes a pointwise sense regardless of any sign assumption on the eigenvalues of $B$, see \cite[Section 3]{GT}. Suppose first that $\max\{\Re(\lambda)\mid \lambda\in \sigma(B)\}\geq 0$. We make use of the well-known identity 
\begin{equation}\label{identity}
\frac{s}{\Gamma(1-s)} \int_0^\infty \frac{1-e^{-t}}{t^{1+s}} dt = 1.
\end{equation}
From \eqref{defpower} and \eqref{identity} we find
\begin{align*}
& 
(-\sA)^s f(X) - f(X) = - \frac{s}{\Gamma(1-s)} \int_0^\infty \frac{1}{t^{1+s}} \left((P_t f(X) -  f(X)) + (1-e^{-t}) f(X)\right) dt
\\
& = - \frac{s}{\Gamma(1-s)} \int_0^1 \frac{1}{t^{1+s}} \left((P_t f(X) -  f(X)) + (1-e^{-t}) f(X)\right) dt
\\
& - \frac{s}{\Gamma(1-s)} \int_1^\infty \frac{1}{t^{1+s}} \left(P_t f(X) -e^{-t} f(X)\right) dt.
\end{align*}
At this point, it suffices to show that either one of the two integrals in the right-hand side of the latter identity converges to $0$ as $s\to 0^+$. Concerning the integral on $(0,1)$, we know from \cite[Lemma 2.5 (case $p=\infty$)]{GT} that $|P_t f(X) - f(X)| \le ||\sA f||_\infty\ t$ for $0\leq t\leq1$. Since also $|1-e^{-t}| \leq t$ for $t\in[0,1]$, we obtain
\begin{align*}
& \frac{s}{\Gamma(1-s)}\left|\int_0^1 \frac{1}{t^{1+s}} \left((P_t f(X) -  f(X)) + (1-e^{-t}) f(X)\right) dt\right| 
\\
& \leq \frac{s}{\Gamma(1-s)}(||\sA f||_\infty + ||f||_\infty) \int_0^1 \frac{dt}{t^{s}} = \frac{s}{(1-s)\Gamma(1-s)}(||\sA f||_\infty + ||f||_\infty)\ \underset{s\to 0^+}{\longrightarrow}\ 0.
\end{align*}
We now consider the integral on $(1,\infty)$. Keeping in mind \eqref{PtKt} and using \eqref{sqrtbound} in Proposition \ref{P:boom}, we have for $1\le t<\infty$
\[
|P_t f(X)| \leq \frac{c_N}{V(t)} ||f||_{1}\leq \frac{c_N}{c_0} \frac{||f||_{1}}{\sqrt{t}}.
\]
We thus infer
\begin{align*}
& \frac{s}{\Gamma(1-s)}\left|\int_1^\infty \frac{1}{t^{1+s}} \left((P_t f(X) -e^{-t} f(X)\right) dt\right| \leq \frac{s}{\Gamma(1-s)} \int_1^\infty \frac{1}{t^{1+s}} \left(|P_t f(X)| + e^{-t} |f(X)|\right) dt\\
&\leq \frac{s}{\Gamma(1-s)}\left( \frac{c_N}{c_0}||f||_{1} \int_1^\infty \frac{1}{t^{1+s+\frac{1}{2}}}dt + ||f||_{\infty} \int_1^\infty e^{-t}dt\right)\\
&=\frac{s}{\Gamma(1-s)}\left( \frac{c_N}{c_0}||f||_{1}\frac{2}{2s+1} + ||f||_{\infty}e^{-1}\right)\ \underset{s\to 0^+}{\longrightarrow}\ 0.
\end{align*}
This establishes the desired conclusion in case (i).  To settle the case (ii), suppose that $\max\{\Re(\lambda)\mid \lambda\in \sigma(B)\}< 0$, and denote
$$
m_\infty(f)=\frac{(4\pi)^{-\frac N2}}{\sqrt{{\rm{det}}\ K_\infty}}\int_{\RN} f(Y)e^{-\frac{<K^{-1}_\infty Y,Y>}{4}}dY,
$$
the average of $f$ with respect to the invariant measure. Notice that from well-known Gaussian formulas we have $m_\infty(f)\le ||f||_\infty$.
As before, using \eqref{identity}, we obtain
\begin{align*}
& (-\sA)^s f(X) - f(X) + m_\infty(f)
\\
&= - \frac{s}{\Gamma(1-s)} \int_0^\infty \frac{1}{t^{1+s}} \left((P_t f(X) -  f(X)) + (1-e^{-t}) (f(X) - m_\infty(f))\right) dt\\
& = - \frac{s}{\Gamma(1-s)} \int_0^1 \frac{1}{t^{1+s}} \left((P_t f(X) -  f(X)) + (1-e^{-t}) (f(X) - m_\infty(f))\right) dt
\\
& - \frac{s}{\Gamma(1-s)} \int_1^\infty \frac{1}{t^{1+s}} \left((P_t f(X)-m_\infty(f)) +e^{-t} (m_\infty(f)-f(X))\right) dt.
\end{align*}
The integral on the interval $(0,1)$ can be treated as in the first part of the proof since $|f(X) - m_\infty(f)|\leq 2||f||_\infty$. For the integral on $(1,\infty)$ we exploit the estimate $|P_t f(X)-m_\infty(f)|\leq C_{f,X} e^{-\alpha t}$ established in \eqref{expoconv} of Proposition \ref{equilibrium}. We thus find 
\begin{align*}
& \frac{s}{\Gamma(1-s)}\left|\int_1^\infty \frac{1}{t^{1+s}} \left((P_t f(X)-m_\infty(f)) +e^{-t} (m_\infty(f)-f(X))\right) dt\right| \\
&\leq \frac{s}{\Gamma(1-s)} \int_1^\infty \frac{1}{t^{1+s}} \left(|P_t f(X)-m_\infty(f)| + e^{-t} |m_\infty(f)-f(X)|\right) dt\\
&\leq \frac{s}{\Gamma(1-s)}\left( \int_1^\infty \frac{ C_{f,X} e^{-\alpha t} + 2||f||_\infty e^{-t}}{t^{1+s}}dt\right)\\
&\leq \frac{s}{\Gamma(1-s)}\left( \int_1^\infty \left(C_{f,X} e^{-\alpha t} + 2||f||_\infty e^{-t}\right)dt\right)\underset{s\to 0^+}{\longrightarrow}\ 0.
\end{align*}
This completes the proof.

\end{proof}
}


\section{Some basic properties of the Besov spaces}\label{S:prop}

In this section we prove some basic properties of the Besov spaces $\mathfrak B^\sA_{s,p}$ under the assumption \eqref{trace}. The main result is Proposition \ref{P:density}, which establishes a key density property for such spaces. This generalises the well-known one for the classical spaces $W^{s,p}$, see e.g. \cite[Theor. 7.38]{Ad}.

Since this is not immediately obvious from its definition, we begin by observing that the Besov seminorm introduced in \eqref{sn} does satisfy the following triangle inequality for all $f, g\in \Bps$,
\begin{equation}\label{triangular}
\mathscr N^\sA_{s,p}(f+g)\leq \mathscr N^\sA_{s,p}(f)+\mathscr N^\sA_{s,p}(g).
\end{equation}
To prove \eqref{triangular} we notice that 
$\mathscr N^\sA_{s,p}(f)=||w_f||_{L^p\left(\R^N\times\R^N\times(0,\infty)\right)}$,
where
\begin{equation}\label{defwf}
w_f(X,Y,t)=t^{-\frac s2-\frac1p} p(X,Y,t)^{\frac1p}\left(f(Y)-f(X)\right).
\end{equation}
Inequality \eqref{triangular} thus follows  from the additivity property $w_{f+g}= w_f + w_g$ and the triangle inequality in $L^p\left(\R^N\times\R^N\times(0,\infty)\right)$. A second useful observation concerns what happens to the Besov-type seminorms $\mathscr N^\sA_{s,p}$ when we change the fractional order $s$ of differentiation. 

\begin{lemma}\label{L:es}
Assume \eqref{trace}, and let $p\geq 1$ and $0<s\leq \sigma$. Then, for every $f\in \mathfrak B^\sA_{\sigma,p}$ we have
\begin{equation}\label{bastasigmauno}
 \mathscr N^\sA_{s,p}(f)^p\leq  \mathscr N^\sA_{\sigma,p}(f)^p + \frac{2^{p+1}}{s p} ||f||^p_p.
\end{equation}
In particular, \eqref{bastasigmauno} implies $\mathfrak B^\sA_{\sigma,p}\hookrightarrow \mathfrak B^\sA_{s,p}$. 
\end{lemma}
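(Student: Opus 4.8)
The plan is to split the integral defining $\mathscr N^\sA_{s,p}(f)^p$ in \eqref{sn} at $t=1$, writing it as $\int_0^1(\cdots)\,dt+\int_1^\infty(\cdots)\,dt$, and to bound the first piece by $\mathscr N^\sA_{\sigma,p}(f)^p$ and the second by a multiple of $||f||_p^p$.

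For the piece over $(0,1)$: the inner quantity $\int_{\RN}P_t(|f-f(X)|^p)(X)\,dX$ is nonnegative, and for $0<t<1$ the function $a\mapsto t^{-a}$ is increasing, so the inequality $s\le\sigma$ (equivalently $\tfrac{sp}{2}+1\le\tfrac{\sigma p}{2}+1$) gives $t^{-\frac{sp}{2}-1}\le t^{-\frac{\sigma p}{2}-1}$ on $(0,1)$. Hence $\int_0^1 t^{-\frac{sp}{2}-1}\int_{\RN}P_t(|f-f(X)|^p)(X)\,dX\,dt\le\int_0^1 t^{-\frac{\sigma p}{2}-1}\int_{\RN}P_t(|f-f(X)|^p)(X)\,dX\,dt\le\mathscr N^\sA_{\sigma,p}(f)^p$. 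This is the only place where the hypothesis $s\le\sigma$ is used.

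For the piece over $(1,\infty)$: I would use the convexity estimate $|f(Y)-f(X)|^p\le 2^{p-1}(|f(Y)|^p+|f(X)|^p)$, valid for $p\ge1$, together with the positivity and linearity of $P_t$ and the identity $P_t1=1$ from \eqref{stochcompl}, to obtain $\int_{\RN}P_t(|f-f(X)|^p)(X)\,dX\le 2^{p-1}\big(\int_{\RN}P_t(|f|^p)(X)\,dX+||f||_p^p\big)$. By Tonelli's theorem and the adjoint identity \eqref{aggstoch}, $\int_{\RN}P_t(|f|^p)(X)\,dX=\int_{\RN}|f(Y)|^p\big(\int_{\RN}p(X,Y,t)\,dX\big)\,dY=e^{-t\tr}||f||_p^p\le||f||_p^p$, the last step using \eqref{trace}. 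Thus $\int_{\RN}P_t(|f-f(X)|^p)(X)\,dX\le 2^p||f||_p^p$ uniformly in $t\ge1$, and since $\int_1^\infty t^{-\frac{sp}{2}-1}\,dt=\tfrac{2}{sp}$, the piece over $(1,\infty)$ is at most $\tfrac{2^{p+1}}{sp}||f||_p^p$. Adding the two bounds gives \eqref{bastasigmauno}, and the embedding $\mathfrak B^\sA_{\sigma,p}\hookrightarrow\mathfrak B^\sA_{s,p}$ follows at once because the right-hand side of \eqref{bastasigmauno} is finite for $f\in\mathfrak B^\sA_{\sigma,p}\subset L^p$.

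I do not expect any real obstacle: the argument is a weighted splitting plus two elementary facts about $P_t$ (stochastic completeness and the adjoint identity). The only steps needing a touch of care are the monotonicity comparison of the two weights on $(0,1)$ and the bookkeeping in the Tonelli/adjoint computation; since everything in sight is nonnegative, there are no integrability subtleties beyond the finiteness already contained in $f\in\mathfrak B^\sA_{\sigma,p}$.
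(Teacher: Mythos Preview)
Your proposal is correct and follows essentially the same approach as the paper's proof: split at $t=1$, compare weights on $(0,1)$ via $s\le\sigma$, and on $(1,\infty)$ use the convexity bound $|f(Y)-f(X)|^p\le 2^{p-1}(|f(Y)|^p+|f(X)|^p)$ together with \eqref{stochcompl}, \eqref{aggstoch} and \eqref{trace} to reduce to $2^p\|f\|_p^p\int_1^\infty t^{-1-sp/2}\,dt$. The bookkeeping and the resulting constant $\tfrac{2^{p+1}}{sp}$ match exactly.
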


\begin{proof}
Using \eqref{stochcompl}-\eqref{aggstoch}, together with the hypothesis $0<s\leq \sigma$ and $\tr\geq 0$, we obtain
\begin{align*}
& \mathscr N^\sA_{s,p}(f)^p
\\
&=\int_0^1 \frac{1}{t^{1+\frac{sp}{2}}} \int_{\RN} P_t(|f - f(X)|^p)(X) dX dt + \int_1^\infty \frac{1}{t^{1+\frac{sp}{2}}} \int_{\RN} P_t(|f - f(X)|^p)(X) dX dt\\
&\leq \int_0^1 \frac{1}{t^{1+\frac{\sigma p}{2}}} \int_{\RN} P_t(|f - f(X)|^p)(X) dX dt +\\
&+ \int_1^\infty \frac{2^{p-1}}{t^{1+\frac{sp}{2}}} \int_{\RN}\int_{\RN} p(X,Y,t)\left(|f(Y)|^p + |f(X)|^p\right) dY dX dt\\
&\leq \mathscr N^\sA_{\sigma,p}(f)^p + \int_1^\infty \frac{2^{p-1}}{t^{1+\frac{sp}{2}}}\left( e^{-t\tr}\|f\|_p^p + \|f\|_p^p \right)dt\leq \mathscr N^\sA_{\sigma,p}(f)^p + 2^p\|f\|_p^p \int_1^\infty t^{-1-\frac{sp}{2}}dt\\
&= \mathscr N^\sA_{\sigma,p}(f)^p + \frac{2^{p+1}}{sp}\|f\|^p_p,
\end{align*}
which proves \eqref{bastasigmauno}.

\end{proof}

Next, we recall that from \cite[Lemma 7.3]{GTiso} we know that $\mathscr S\subset \Bps$ for any $0<s<1$ and $1\le p<\infty$. In the next result we prove that, under the assumption \eqref{trace}, the space $C^\infty_0$, and therefore the Schwartz class $\So$, is actually dense in $\Bps$.

\begin{proposition}\label{P:density}
Assume \eqref{trace}. For every $0<s<1$ and $1\le p<\infty$, we have $\overline{C^\infty_0}^{\Bps}=\Bps$.
\end{proposition}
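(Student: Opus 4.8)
The plan is to prove density in two stages: first approximate an arbitrary $f\in\Bps$ by compactly supported functions (i.e. truncate), and then mollify each compactly supported function to land in $C^\infty_0$. Throughout I will rely on the triangle inequality \eqref{triangular} for $\mathscr N^\sA_{s,p}$, on the representation $\mathscr N^\sA_{s,p}(g)=\|w_g\|_{L^p(\R^N\times\R^N\times(0,\infty))}$ with $w_g$ as in \eqref{defwf}, and on the fact (recalled from \cite[Lemma 7.3]{GTiso}) that $\So\subset\Bps$, so that each smooth approximant has finite seminorm. It is convenient to observe from Lemma \ref{L:es} (with $\sigma=s$ formally, or more precisely by combining \eqref{bastasigmauno}-type splitting) that the ``tail'' part $\int_1^\infty t^{-1-\frac{sp}{2}}\int_{\RN}P_t(|g-g(X)|^p)\,dX\,dt$ is controlled by $\frac{2^{p+1}}{sp}\|g\|_p^p$ alone; this will let me absorb tail contributions using only the $L^p$ norm, which behaves well under both truncation and mollification.

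First I would handle truncation. Fix a cutoff $\chi\in C^\infty_0$ with $\chi\equiv 1$ on $B(0,1)$, $0\le\chi\le1$, and set $f_R(X)=f(X)\chi(X/R)$. Then $f_R\to f$ in $L^p$, so the tail part of $\mathscr N^\sA_{s,p}(f-f_R)^p$ tends to $0$ by the remark above. For the near-diagonal part $\int_0^1\cdots$ I would split $|(f-f_R)(Y)-(f-f_R)(X)|^p$ and use $w_{f-f_R}=w_f-w_{f_R}$; the genuinely new point is to show $\mathscr N^\sA_{s,p}(f_R)$ stays bounded (and that the error concentrates where $\chi(X/R)$ varies, which is pushed to infinity). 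Here I expect to use the product structure: $(f\chi_R)(Y)-(f\chi_R)(X)=\chi_R(Y)(f(Y)-f(X))+f(X)(\chi_R(Y)-\chi_R(X))$, bound the first term by $|f(Y)-f(X)|$ pointwise (giving $\le\mathscr N^\sA_{s,p}(f)$), and for the second term use $\mathscr N^\sA_{s,p}(\chi_R)<\infty$ together with $\|f\|_\infty$... but $f$ need not be bounded, so instead I would keep $f(X)$ inside and use a Minkowski/Hölder argument on $\int_{\RN}\int_{\RN}|f(X)|^p p(X,Y,t)|\chi_R(Y)-\chi_R(X)|^p\,dY\,dX$, exploiting $\|\nabla\chi_R\|_\infty\le C/R$ on the near-diagonal scale $m_t(X,Y)\lesssim\sqrt t\le 1$. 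Dominated convergence in $R$ (the integrand is dominated by the finite $w_f$ contribution) then gives $\mathscr N^\sA_{s,p}(f-f_R)\to0$.

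Next, with $f$ now compactly supported and in $L^p$, I would mollify: $f_\ve=f*\rho_\ve$ with a standard Friedrichs mollifier, so $f_\ve\in C^\infty_0$ and $f_\ve\to f$ in $L^p$. Again the tail part of $\mathscr N^\sA_{s,p}(f-f_\ve)$ goes to $0$ via the $L^p$ bound. For the near-diagonal part, using $w_{f-f_\ve}(X,Y,t)=t^{-\frac s2-\frac1p}p(X,Y,t)^{1/p}\big((f-f_\ve)(Y)-(f-f_\ve)(X)\big)$ and writing $(f-f_\ve)(Y)-(f-f_\ve)(X)=\int\rho_\ve(z)\big[(f(Y-z)-f(X-z))-(f(Y)-f(X))\big]dz$, I would apply Minkowski's integral inequality in the $z$-variable to reduce matters to showing that translation is continuous on $\Bps$ near-diagonally, i.e. $\int_0^1 t^{-1-\frac{sp}2}\int\int p(X,Y,t)|g_z(X,Y)-g_0(X,Y)|^p\,dY\,dX\,dt\to0$ as $z\to0$, where $g_z(X,Y)=f(Y-z)-f(X-z)$. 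Since $w_f\in L^p$ of the product space, this is a continuity-of-translation statement for an $L^p$ function on a measure space — but the translation acts only in the $X,Y$ slots while $p(X,Y,t)$ is not translation invariant, so I cannot directly invoke $L^p$-continuity of translations. The cleanest route is to prove it first for $f\in C^\infty_0$ (where $|g_z-g_0|\lesssim|z|$ pointwise with compact support, so everything is dominated and one passes to the limit), then extend to general compactly supported $f\in\Bps$ by a density-in-$L^p$ argument combined with the uniform tail control — essentially a standard $3\ve$ argument using that $C^\infty_0$ is dense in $L^p$ and that $\mathscr N^\sA_{s,p}$ restricted to the tail is an $L^p$-seminorm.

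The main obstacle I anticipate is precisely this last point: showing that the near-diagonal part of $\mathscr N^\sA_{s,p}$ behaves continuously under translation and mollification, given that the kernel $p(X,Y,t)$ from \eqref{PtKt} mixes $X$, $Y$ and $t$ in the non-Euclidean, non-translation-invariant way dictated by H\"ormander's formula \eqref{m}. The resolution is to never try to move the translation through the kernel, but instead to dominate by $w_f\in L^p(\R^N\times\R^N\times(0,\infty))$ and invoke the dominated convergence theorem in the $(X,Y,t)$ variables for each fixed small $z$, having first established pointwise a.e. convergence of the translated integrands — the pointwise convergence being trivial for smooth $f$ and transferred to general $f$ by approximation in $L^p$ together with the observation that the $t\in(0,1)$ integration against $t^{-1-\frac{sp}2}$ is, after the change to $w_f$, just an $L^p$-norm on a product measure space where $L^p$-continuity of translations (in a single auxiliary variable $z$) does apply once the kernel is frozen via domination. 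Once these two approximation steps are in place, composing them yields $C^\infty_0$-functions converging to $f$ in $\Bps$, and since $C^\infty_0\subset\So\subset\Bps$ this proves $\overline{C^\infty_0}^{\Bps}=\Bps$.
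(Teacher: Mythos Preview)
Your overall two-step plan (truncation plus mollification) matches the paper's, but the mollification step has a genuine gap that your proposal does not close. You correctly identify the obstacle: the kernel $p(X,Y,t)$ is not invariant under the simultaneous Euclidean translation $(X,Y)\mapsto(X-z,Y-z)$, so $t^{-\frac s2-\frac1p}p(X,Y,t)^{1/p}(f(Y-z)-f(X-z))$ is \emph{not} $w_f$ evaluated at a shifted point, and hence neither domination by $|w_f|^p$ nor $L^p$-continuity of translations on the product space applies. Your proposed workarounds do not succeed. The dominated convergence route requires an integrable majorant for $t^{-1-\frac{sp}2}p(X,Y,t)|f(Y-z)-f(X-z)|^p$ uniform in small $z$; after changing variables this becomes $t^{-1-\frac{sp}2}p(X+z,Y+z,t)|f(Y)-f(X)|^p$, and for degenerate $B$ (e.g.\ Kolmogorov) the shift $(I-e^{tB})z$ inside the Gaussian, once rescaled by $K(t)^{-1/2}/\sqrt t$, is of order $t^{-1/2}$, so no uniform Gaussian domination by $p(X,Y,t)$ is available. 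The $3\ve$ route via $L^p$-density of $C^\infty_0$ is circular: you would need to control the near-diagonal part of $\mathscr N^\sA_{s,p}(f-g)$ by $\|f-g\|_p$, which is false, or else you would need the very density in $\Bps$ you are proving. Even for $f\in C^\infty_0$ your pointwise bound $|g_z-g_0|\lesssim|z|$ with ``compact support'' does not give convergence, since the support is only compact in one of $X,Y$ and $\int_0^1 t^{-1-\frac{sp}2}\,dt=\infty$.

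The key idea you are missing is that the H\"ormander kernel does enjoy a \emph{twisted} translation invariance:
\[
p(X,Y,t)=p\big(X-\ve\,e^{-tB}Z,\ Y-\ve Z,\ t\big),
\]
as one reads off directly from \eqref{PtKt}--\eqref{m}. The paper exploits this by rewriting $f(Y)-f_\ve(Y)-f(X)+f_\ve(X)$ with the $X$-translation taken along $e^{-tB}Z$ rather than $Z$; then the mollified piece becomes exactly $\int_{|Z|\le1}w_f(X-\ve e^{-tB}Z,Y-\ve Z,t)\rho(Z)\,dZ$, and genuine $L^p$-continuity of translations in $(X,Y)$ applies. The mismatch between $e^{-tB}Z$ and $Z$ produces an explicit error term carrying the factor $\rho(Z)-e^{t\tr}\rho(e^{tB}Z)=O(t)$, which is what tames the singularity $t^{-1-\frac{sp}2}$ near $t=0$. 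In the truncation step a related device is needed: one controls $|\chi_R(Y)-\chi_R(X)|$ not by $|Y-X|$ but by $|Y-e^{tB}X|+|(e^{tB}-I)X|$ (and the symmetric decomposition), so that the Gaussian moment $\int p(X,Y,t)|Y-e^{tB}X|^p\,dY\sim t^{p/2}$ and the bound $\|e^{tB}-I\|=O(t)$ supply the missing powers of $t$.
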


\begin{proof}
\noindent{\it Step I.} We first show that
$\overline{C^{\infty}\cap \Bps}^{\Bps}=\Bps.$
Precisely, we fix $\rho\in C^{\infty}_{0}$, $\text{supp}\ \rho \subseteq\{|Z|\leq 1\}$, $\rho\ge0$ and $||\rho||_1=1$, and consider a family of approximate to the identity $\rho_\ve(Z)=\ve^{-N}\rho\left(\frac{Z}{\ve}\right)$. We shall prove that, remarkably, the standard convolution of a function $f\in \Bps$ with $\rho_{\varepsilon}$ establishes the following result
\begin{equation}\label{denseinfty}
\rho_{\varepsilon}\ast f \to f \mbox{ in }\ \Bps\,\,\, \mbox{ as }\varepsilon\to 0^{+}.
\end{equation}
We mention at this point that, a related local density result for the Sobolev spaces generated by vector fields with Lipschitz coefficients was first discovered by Friedrichs himself in \cite{Fri}, see also \cite[Appendix]{GN} where a global version of this result was found. We thus fix $f\in \Bps$ and denote $f_{\varepsilon}(X)=\left(\rho_{\varepsilon}\ast f\right) (X)=\int_{\mathbb{R}^{N}}f(X-Z)\rho_{\varepsilon}(Z)dZ$. It is classical that $f_{\varepsilon} \in C^{\infty} \cap L^p$ and $||f_{\varepsilon}-f||_p\to 0$ as $\ve\to 0^+$. To prove \eqref{denseinfty} we will show that $\mathscr{N}^{\mathscr{A}}_{s,p}(f-f_{\varepsilon})\to 0$ as $\ve\to 0^+$. This fact, together with \eqref{triangular}, will also tell us that $f_{\varepsilon} \in \Bps$.
We now write
\begin{align*}
\mathscr{N}^{\mathscr{A}}_{s,p}(f-f_{\varepsilon})^p &=\int_{0}^{\infty}t^{-1-\frac{s p}{2} }\int_{\mathbb{R}^{N}}P_t(|f-f_{\varepsilon}-f(X)+f_{\varepsilon}(X)|^{p})(X)dXdt=\\
	&=\int_{0}^{1}t^{-1-\frac{s p}{2}}\int_{\mathbb{R}^{N}}\int_{\mathbb{R}^{N}}p(X,Y,t)|f(Y)-f_{\varepsilon}(Y)-f(X)+f_{\varepsilon}(X)|^{p}dYdXdt+\\
	&+\int_{1}^{\infty}t^{-1-\frac{s p}{2}}\int_{\mathbb{R}^{N}}\int_{\mathbb{R}^{N}}p(X,Y,t)|f(Y)-f_{\varepsilon}(Y)-f(X)+f_{\varepsilon}(X)|^{p}dYdXdt.
\end{align*}
It is easy to see that the last integral tends to $0$ as $\varepsilon\to 0^+$. In fact, by \eqref{stochcompl} and \eqref{aggstoch} we have
\begin{align*}
	&\int_{1}^{\infty}t^{-1-\frac{s p}{2}}\int_{\mathbb{R}^{N}}\int_{\mathbb{R}^{N}}p(X,Y,t)|f(Y)-f_{\varepsilon}(Y)-f(X)+f_{\varepsilon}(X)|^{p}dYdXdt\\
	&\leq 2^{p-1}\left(\int_{1}^{\infty}t^{-1-\frac{s p}{2}}\int_{\mathbb{R}^{N}}\int_{\mathbb{R}^{N}}p(X,Y,t)|f(Y)-f_{\varepsilon}(Y)|^{p}dXdYdt+\right.\\
	&\left.+\int_{1}^{\infty}t^{-1-\frac{s p}{2}}\int_{\mathbb{R}^{N}}\int_{\mathbb{R}^{N}}p(X,Y,t)|f(X)-f_{\varepsilon}(X)|^{p}dYdXdt\right)\\
	&=2^{p-1}\left(\int_{1}^{\infty}t^{-1-\frac{s p}{2}}e^{-t\tr }dt\right)\int_{\mathbb{R}^{N}}|f(Y)-f_{\varepsilon}(Y)|^{p}dY+\\
	&+2^{p-1}\left(\int_{1}^{\infty}t^{-1-\frac{s p}{2}}dt\right)\int_{\mathbb{R}^{N}}|f(X)-f_{\varepsilon}(X)|^{p}dX\ \underset{\ve\to 0^+}{\longrightarrow}\ 0,
\end{align*}
since $\tr \ge 0$, and $||f-f_{\varepsilon}||_{p}\underset{\ve\to 0^+}{\longrightarrow} 0$.  
To complete the proof of \eqref{denseinfty}, we are left with proving that 
\begin{equation}\label{claimsmallt}
\int_{0}^{1}t^{-1-\frac{s p}{2}}\int_{\mathbb{R}^{N}}\int_{\mathbb{R}^{N}}p(X,Y,t)|f(Y)-f_{\varepsilon}(Y)-f(X)+f_{\varepsilon}(X)|^{p}dYdXdt \underset{\ve\to 0^+}{\longrightarrow} 0.
\end{equation}
With this objective in mind, for $X,Y\in \RN$ and $0\leq t\leq 1$ we write
\begin{align*}
&f(Y)-f_{\varepsilon}(Y)-f(X)+f_{\varepsilon}(X)=f(Y)-f(X)-\int_{\RN}\hspace{-0.1cm}f(Y-\ve Z)\rho(Z)dZ+\int_{\RN}\hspace{-0.1cm}f(X-\ve Z)\rho(Z)dZ\\
&=(f(Y)-f(X))-\int_{\RN}\left(f(Y-\ve Z)-f(X-\ve e^{-tB}Z)\right)\rho(Z)dZ
\\
&+\int_{\RN}\left(f(X-\ve Z)-f(X)\right)\rho(Z)dZ- \int_{\RN}\left(f(X-\ve e^{-tB}Z)-f(X)\right)\rho(Z)dZ\\
&=(f(Y)-f(X))-\int_{\RN}\left(f(Y-\ve Z)-f(X-\ve e^{-tB}Z)\right)\rho(Z)dZ
\\
&+\int_{\RN}\left(f(X-\ve Z)-f(X)\right)\left(\rho(Z)-e^{t\tr}\rho(e^{tB}Z)\right)dZ.
\end{align*}
Using \eqref{PtKt} and \eqref{m} we now  observe that the following
identity holds
$$p(X,Y,t)=p(X-\ve e^{-tB}Z,Y-\ve Z,t).$$
Combining these two facts we thus have
\begin{align*}
& p(X,Y,t)|f(Y)-f_{\varepsilon}(Y)-f(X)+f_{\varepsilon}(X)|^{p} = \bigg|p(X,Y,t)^{\frac1p}(f(Y)-f(X))
\\
& - \int_{\RN}\left(p(X-\ve e^{-tB}Z,Y-\ve Z,t)\right)^{\frac1p}\left(f(Y-\ve Z)-f(X-\ve e^{-tB}Z)\right)\rho(Z)dZ
\\
& + \int_{\RN} p(X,Y,t)^{\frac1p}\left(f(X-\ve Z)-f(X)\right)\left(\rho(Z)-e^{t\tr}\rho(e^{tB}Z)\right)dZ\bigg|^{p}.
\end{align*}
Moreover, keeping \eqref{defwf} in mind, and using $\text{supp}\ \rho\subseteq \{|Z|\leq 1\}$, $||\rho||_1=1$, and H\"older's inequality, we find
\begin{align*}
&t^{-1-\frac{s p}{2}} p(X,Y,t) |f(Y)-f_{\varepsilon}(Y)-f(X)+f_{\varepsilon}(X)|^{p}\\
&= \bigg|w_f(X,Y,t)-\int_{\RN}w_f(X-\ve e^{-tB}Z,Y-\ve Z,t)\rho(Z)dZ
\\
& + t^{-\frac1p-\frac s 2}\int_{\RN} p(X,Y,t)^{\frac1p}\left(f(X-\ve Z)-f(X)\right)\left(\rho(Z)-e^{t\tr}\rho(e^{tB}Z)\right)dZ \bigg|^{p}
\\
&= \bigg|\int_{\{|Z|\leq 1\}}\left(w_f(X,Y,t)-w_f(X-\ve e^{-tB}Z,Y-\ve Z,t)\right)\rho(Z)dZ
\\
& + t^{-\frac1p-\frac s 2}\int_{\RN} p(X,Y,t)^{\frac1p}\left(f(X-\ve Z)-f(X)\right)\left(\rho(Z)-e^{t\tr}\rho(e^{tB}Z)\right)dZ \bigg|^{p}\\
&\leq 2^{p-1}\bigg(|\{|Z|\leq 1\}|^{p-1}\int_{\{|Z|\leq 1\}}\left|w_f(X,Y,t)-w_f(X-\ve e^{-tB}Z,Y-\ve Z,t)\right|^p\rho(Z)^p dZ 
\\
&+ |\{|Z|\leq M\}|^{p-1}\int_{\{|Z|\leq M\}}t^{-1-\frac{s p}{2}}p(X,Y,t)\left|f(X-\ve Z)-f(X)\right|^p\left|\rho(Z)-e^{t\tr}\rho(e^{tB}Z)\right|^p dZ\bigg),
\end{align*}
where $M\geq 1$ is such that $|e^{-tB}Z|\leq M$ for all $|Z|\leq 1$ and $0\leq t\leq 1$. Hence, by \eqref{stochcompl} and the previous inequality, we have
\begin{align}\label{newterm}
&\int_{0}^{1}t^{-1-\frac{s p}{2}}\int_{\mathbb{R}^{N}}\int_{\mathbb{R}^{N}}p(X,Y,t)|f(Y)-f_{\varepsilon}(Y)-f(X)+f_{\varepsilon}(X)|^{p}dYdXdt
\\
& \leq 2^{p-1}|B_1|^{p-1}\bigg(\int_{B_{1}}\rho^p(Z)\int_0^1||w_f(\cdot,\cdot,t)-w_f(\cdot-\ve e^{-tB}Z,\cdot-\ve Z,t)||^p_{L^p\left(\RN\times\RN\right)}dtdZ 
\notag
\\
& + |\{|Z|\leq M\}|^{p-1}\int_{\{|Z|\leq M\}}||f(\cdot-\ve Z)-f||_p^p\int_0^1\frac{\left|\rho(Z)-e^{t\tr}\rho(e^{tB}Z)\right|^p}{t^{1+\frac{s p}{2}}}dtdZ\bigg).
\notag
\end{align}
We explicitly remark that the last term containing $\left(\rho(Z)-e^{t\tr}\rho(e^{tB}Z)\right)$ does not appear when $B=0$. To complete the proof of \eqref{claimsmallt} we next show that the two integrals in the right-hand side of \eqref{newterm} converge to $0$ as $\ve\to 0^+$. To see that the second integral goes to zero we observe that $t^{-1-\frac{s p}{2}}\left|\rho(Z)-e^{t\tr}\rho(e^{tB}Z)\right|^p$ is summable on $[0,1]$ since $\rho(Z)-e^{t\tr}\rho(e^{tB}Z)=O(t)$ as $t\to 0$, uniformly in $|Z|\le M$. On the other hand, $f\in L^p$  implies that $||f(\cdot-\ve Z)-f||_p^p \le 2^p||f||^p_p$. 
By Lebesgue dominated convergence we conclude that $||f(\cdot-\ve Z)-f||_p^p \underset{\ve\to 0^+}{\longrightarrow} 0$. 
To recognise that the first integral in \eqref{newterm} converges to zero we observe that $f\in \Bps$ is equivalent to saying that $w_f \in L^p\left(\R^N\times\R^N\times(0,\infty)\right)$, see \eqref{defwf}. Therefore, by the boundedness of $e^{-tB}Z$ for $|Z|\le 1$ and $t\in [0,1]$ and the continuity in $L^p$ mean, for almost any $t\in (0,1)$ we have
$$||w_f(\cdot,\cdot,t)-w_f(\cdot-\ve e^{-tB}Z,\cdot-\ve Z,t)||^p_{L^p\left(\RN\times\RN\right)} \underset{\ve\to 0^+}{\longrightarrow} 0.$$
Keeping in mind that $||w_f(\cdot,\cdot,t)-w_f(\cdot-\ve e^{-tB}Z,\cdot-\ve Z,t)||^p_{L^p\left(\RN\times\RN\right)}\le 2^p||w_f(\cdot,\cdot,t)||^p_{L^p\left(\RN\times\RN\right)} \in L^1(0,1)$, by Lebesgue dominated convergence we conclude that also the first integral in \eqref{newterm} converges to zero as $\ve\to 0^+$.
This completes the proof of \eqref{denseinfty}.

\vskip 0.4cm

\noindent{\it Step II. } We finish the proof of the proposition by showing that
$
\overline{ C^{\infty}_{0}}^{\Bps}=\Bps.
$
With Step I in hands, it is now enough to show that if $f\in \Bps$, and $\{\eta_{\varepsilon}\}_{\varepsilon>0}$ is a family of smooth cut-off functions approximating $1$ in a pointwise sense, then we have $\eta_\ve f \underset{\ve \to 0^+}{\longrightarrow}  f$ in $\Bps$.
More precisely, let $\eta_\ve(Z)=\eta(\varepsilon Z)$, where $\eta\in C^{\infty}_{0}$ is such that $0\leq\eta\leq 1$, $\eta(Z) \equiv 1$ for $|Z|\le 1$ and $\eta\equiv 0$ for $|Z|\ge 2$. It is trivial that $||\eta_\ve f-f||_p \to 0$ as $\ve\to 0^+$. Moreover, we have
\begin{align*}
& \mathscr{N}^{\mathscr{A}}_{s,p}(f-\eta_{\varepsilon}f)^{p} = \int_{0}^{\infty}\int_{\mathbb{R}^{N}}\int_{\mathbb{R}^{N}}\frac{p(X,Y,t)}{t^{1+\frac{s p}{2}}}\left|f(Y)-\eta(\ve Y)f(Y)-f(X)+\eta(\ve X)f(X)\right|^{p}dYdXdt
\\
&\overset{def}{=}\int_{0}^{\infty}\int_{\mathbb{R}^{N}}\int_{\mathbb{R}^{N}}g_\ve(X,Y,t)dYdXdt.
	\end{align*}
It is easy to recognise that $g_\ve(X,Y,t)\underset{\ve\to 0^+}{\longrightarrow} 0$, for almost every $(X,Y,t)\in \RN\times\RN\times(0,\infty)$. We also notice that in view of \eqref{stochcompl}, \eqref{aggstoch}, the fact that $f\in L^p$, and that $s p>0$, we have for large values of $t$ 
\begin{align*}
&\mathbf 1_{(1,\infty)}(t)g_\ve(X,Y,t)\leq 
2^{p-1}\mathbf 1_{(1,\infty)}(t)t^{-1-\frac{s p}{2}}p(X,Y,t)\left((1-\eta(\ve Y))^p|f(Y)|^p+(1-\eta(\ve X))^p|f(X)|^p\right)\\
&\leq 2^{p-1}\mathbf 1_{(1,\infty)}(t)t^{-1-\frac{s p}{2}}p(X,Y,t)\left(|f(Y)|^p+|f(X)|^p\right)\in L^1\left(\RN\times\RN\times(0,\infty)\right).
\end{align*}
On the other hand, if we indicate $B_2 = \{Z\in \RN\mid |Z|\le 2\}$, then for small values of $t$ and every $0<\ve\leq 1$, we have
\begin{align*}
&\mathbf 1_{(0,1)}(t)g_\ve(X,Y,t) \\
&\leq\mathbf 1_{(0,1)}(t)c_p t^{-1-\frac{s p}{2}}p(X,Y,t)\left((1+\eta^p(\ve Y))|f(Y)-f(X)|^p+\left|\eta(\ve X)-\eta(\ve Y)\right|^p|f(X)|^p\right)\\
&\leq \mathbf 1_{(0,1)}(t)c_p\left(2|w_f(X,Y,t)|^p+ \left(\mathbf 1_{B_2}(\ve X)+\mathbf 1_{B_2}(\ve Y)\right)\ve^p||\nabla\eta||^p_\infty \frac{p(X,Y,t)}{t^{1+\frac{s p}{2}}}\left|Y-X\right|^p|f(X)|^p\right)\\
&\leq  2c_p |w_f(X,Y,t)|^p + c_p||\nabla\eta||^p_\infty\mathbf 1_{(0,1)}(t)t^{-1-\frac{s p}{2}}|f(X)|^p p(X,Y,t)\left(\mathbf 1_{B_2}(\ve X)\ve^p2^{p-1}\left(\left|Y-e^{tB}X\right|^p+\right.\right.\\
&\left.\left.+\left|\left(e^{tB}-\mathbb{I}\right)X\right|^p\right) +\mathbf 1_{B_2}(\ve Y)\ve^p2^{p-1}\left(\left|e^{-tB}Y-X\right|^p+\left|\left(\mathbb{I}-e^{-tB}\right)Y\right|^p\right)\right)\\
&\leq  2c_p |w_f(X,Y,t)|^p + c_p2^{p-1}||\nabla\eta||^p_\infty\mathbf 1_{(0,1)}(t)t^{-1-\frac{s p}{2}}|f(X)|^p p(X,Y,t)\left|Y-e^{tB}X\right|^p\left(1+\|e^{-tB}\|^p\right) +\\
&+c_p2^{p-1}||\nabla\eta||^p_\infty\mathbf 1_{(0,1)}(t)t^{-1-\frac{s p}{2}}|f(X)|^p p(X,Y,t)\left(\|e^{tB}-\mathbb{I}\|^p\left|\ve X\right|^p\mathbf 1_{B_2}(\ve X) +\right.\\
&\left.+ \|\mathbb{I}-e^{-tB}\|^p\left|\ve Y\right|^p\mathbf 1_{B_2}(\ve Y)\right)\\
&\leq  2c_p |w_f(X,Y,t)|^p + c_p2^{p-1}||\nabla\eta||^p_\infty\mathbf 1_{(0,1)}(t)t^{-1-\frac{s p}{2}}|f(X)|^p p(X,Y,t)\left|Y-e^{tB}X\right|^p\left(1+\|e^{-tB}\|^p\right) +\\
&+c_p2^{2p-1}||\nabla\eta||^p_\infty\mathbf 1_{(0,1)}(t)t^{-1-\frac{s p}{2}}|f(X)|^p p(X,Y,t)\left(\|e^{tB}-\mathbb{I}\|^p+ \|\mathbb{I}-e^{-tB}\|^p\right).
\end{align*}
The previous chain of inequalities shows an uniform bound in $\ve$ for $\mathbf 1_{(0,1)}(t)g_\ve(X,Y,t)$ in terms of a sum of three functions. The first function belongs to $L^1\left(\RN\times\RN\times(0,\infty)\right)$ since $f\in \mathfrak B^\sA_{s,p}$ and thus $w_f\in L^p$. The last one belongs to $L^1$ by \eqref{stochcompl}, the fact that $f\in L^p$, and
$\left(\|e^{tB}-\mathbb{I}\|^p+ \|\mathbb{I}-e^{-tB}\|^p\right)=O(t^p)\quad\mbox{ as }t\to 0^+.$
Finally, also the second function belongs to $L^1$ since, in view of the fact that $\left(1+\|e^{-tB}\|^p\right)$ stays bounded for $0\leq t\leq 1$, that $0<s<1$ and that $\|\sqrt{K(t)}\|$ is uniformly bounded for $0\leq t\leq 1$, we have
\begin{align*}
&\int_0^1\int_{\RN}\int_{\RN}t^{-1-\frac{s p}{2}}|f(X)|^p p(X,Y,t)\left|Y-e^{tB}X\right|^p dY dX dt\\
&=\int_0^1\int_{\RN}\int_{\RN} t^{-1-\frac{s p}{2}}|f(X)|^p p(0,\xi,1)\left|\sqrt{tK(t)}\xi\right|^p d\xi dX dt\\
&=||f||^p_p \int_0^1\int_{\RN} \frac{t^{\frac p2 (1-s)}}{t}p(0,\xi,1)\left|\sqrt{K(t)}\xi\right|^pd\xi dt<\infty.
\end{align*}
All these considerations, and Lebesgue dominated convergence theorem, allow to conclude that $\mathscr{N}^{\mathscr{A}}_{s,p}(f-\eta_{\varepsilon}f)^{p} \underset{\ve\to 0^+}{\longrightarrow} 0$. This completes the proof of Step II.

\end{proof}



\section{Limiting behaviour of the Besov spaces as $s\to 0^+$:  Proof of Theorem \ref{T:MSallp}}\label{S:lim}

With the preliminary work of the previous sections in place, in the present one we can finally establish our generalisation of the result by Maz'ya \& Shaposhnikova \eqref{MSheat} to the Besov spaces $\mathfrak B_{s,p}^\sA$. The following four lemmas constitute the core of the proof of Theorem \ref{T:MSallp}.

\begin{lemma}\label{L:uno}
Let $1\leq p <\infty$, and suppose $f\in \underset{0<\sigma<1}{\bigcup}\mathfrak B^\sA_{\sigma,p}$. Then,
\begin{equation}\label{stateuno}
\underset{s\to 0^+}{\lim} s \int_0^1  \frac{1}{t^{\frac{s p}2 +1}} \int_{\RN} P_t\left(|f - f(X)|^p\right)(X) dX dt = 0.
\end{equation}
\end{lemma}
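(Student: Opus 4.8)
The goal is to show that $s$ times the near-diagonal (small-time) part of the Besov seminorm to the $p$-th power vanishes as $s\to 0^+$. The plan is to exploit the fact that $f$ lies in $\mathfrak B^\sA_{\sigma,p}$ for \emph{some} fixed $\sigma\in(0,1)$, so that the quantity $\int_0^1 t^{-1-\frac{\sigma p}{2}}\int_{\RN}P_t(|f-f(X)|^p)(X)\,dX\,dt$ is a finite constant, call it $C_\sigma(f)$. First I would bound the integrand for $s<\sigma$: on the interval $(0,1)$ one has $t^{-1-\frac{sp}{2}}\le t^{-1-\frac{\sigma p}{2}}$, since $t\le 1$ and $s\le\sigma$. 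Hence
\begin{equation*}
\int_0^1 \frac{1}{t^{\frac{sp}{2}+1}}\int_{\RN}P_t(|f-f(X)|^p)(X)\,dX\,dt \;\le\; \int_0^1 \frac{1}{t^{\frac{\sigma p}{2}+1}}\int_{\RN}P_t(|f-f(X)|^p)(X)\,dX\,dt \;=\; C_\sigma(f) \;<\;\infty.
\end{equation*}
Multiplying by $s$ and letting $s\to 0^+$ gives the conclusion immediately, since the right-hand side is a constant independent of $s$.

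So the argument is essentially a one-line monotonicity observation followed by the elementary limit $s\cdot C_\sigma(f)\to 0$. The only point deserving care is that $C_\sigma(f)<\infty$, but this is exactly the content of $f\in\mathfrak B^\sA_{\sigma,p}$ restricted to the small-time part: indeed $\mathscr N^\sA_{\sigma,p}(f)^p<\infty$ implies both the $(0,1)$ and the $(1,\infty)$ pieces are finite, and here we only need the former. Alternatively, one can invoke Lemma~\ref{L:es}: for $0<s\le\sigma$ one has $\mathscr N^\sA_{s,p}(f)^p\le \mathscr N^\sA_{\sigma,p}(f)^p+\frac{2^{p+1}}{sp}\|f\|_p^p$, but that bound degenerates as $s\to 0^+$ because of the $1/s$ factor, so it is cleaner to split off the $(1,\infty)$ part by hand and use plain monotonicity of $t^{-1-\frac{sp}{2}}$ on $(0,1)$ as above rather than quoting Lemma~\ref{L:es} directly.

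I do not anticipate a genuine obstacle here; the lemma is soft and its role is merely to dispose of the small-time contribution so that the real analysis in Lemmas~\ref{L:due}--\ref{L:quattro} can concentrate on the large-time regime $t\in(1,\infty)$, where the drift, the trace of $B$, and the volume function $V(t)$ enter and produce the dichotomy $\tr=0$ versus $\tr>0$. If anything needs to be said carefully, it is only that the hypothesis is $f\in\bigcup_{0<\sigma<1}\mathfrak B^\sA_{\sigma,p}$ — one fixes one such $\sigma$ at the start and keeps it fixed throughout, with all constants allowed to depend on it.
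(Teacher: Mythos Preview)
Your proof is correct and essentially identical to the paper's: fix $\sigma$ with $f\in\mathfrak B^\sA_{\sigma,p}$, use the monotonicity $t^{-1-\frac{sp}{2}}\le t^{-1-\frac{\sigma p}{2}}$ on $(0,1)$ for $0<s\le\sigma$ to bound the integral by $\mathscr N^\sA_{\sigma,p}(f)^p<\infty$, and then multiply by $s$ and let $s\to 0^+$. Your observation that invoking Lemma~\ref{L:es} directly would be unhelpful because of the $1/s$ term is also on point.
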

\begin{proof}
Suppose $f\in \mathfrak B^\sA_{\sigma,p}$ for some $\sigma \in (0,1)$. For $0<s\leq \sigma$, we have
\begin{align*}
&\int_0^1  \frac{1}{t^{\frac{s p}2 +1}} \int_{\RN} P_t\left(|f - f(X)|^p\right)(X) dX dt\\
&\leq \int_0^1  \frac{1}{t^{\frac{\sigma p}2 +1}} \int_{\RN} P_t\left(|f - f(X)|^p\right)(X) dX dt \leq \mathscr N^\sA_{\sigma,p}(f)^p<\infty.
\end{align*}
Being the previous inequality valid for all $s\leq \sigma$, \eqref{stateuno} easily follows by multiplying by $s$ and passing to the limit.

\end{proof}

\begin{lemma}\label{L:due}
Let $1\leq p <\infty$ and suppose $f\in L^p$. Then,
\begin{equation}\label{statedue}
\underset{s\to 0^+}{\lim}  s \int_1^\infty\int_{\RN}\int_{\RN}  \frac{p(X,Y,t)}{t^{\frac{s p}2 +1}}\left(|f(X)|^p+|f(Y)|^p\right)dY dX dt = \begin{cases}
\frac{4}{p} ||f||_p^p,\ \ \  \mbox{if }  \tr = 0,
\\
\\
\frac{2}{p}||f||_p^p,\ \ \ \mbox{if } \tr >0.
\end{cases}
\end{equation}
\end{lemma}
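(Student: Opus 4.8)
The plan is to split the double integral using Fubini and compute each piece by changing the order of integration appropriately. For the term involving $|f(Y)|^p$, integrate first in $X$: by \eqref{aggstoch} we have $\int_{\RN} p(X,Y,t)\,dX = e^{-t\tr}$, so that term becomes
\[
s\int_1^\infty \frac{e^{-t\tr}}{t^{\frac{sp}2+1}}\,dt\ \|f\|_p^p.
\]
For the term involving $|f(X)|^p$, integrate first in $Y$: by \eqref{stochcompl} we have $\int_{\RN} p(X,Y,t)\,dY = 1$, so that term becomes
\[
s\int_1^\infty \frac{1}{t^{\frac{sp}2+1}}\,dt\ \|f\|_p^p = s\cdot\frac{2}{sp}\ \|f\|_p^p = \frac{2}{p}\|f\|_p^p,
\]
which is \emph{exactly} equal to $\frac{2}{p}\|f\|_p^p$ for every $s>0$, hence contributes $\frac{2}{p}\|f\|_p^p$ in the limit regardless of $\tr$.

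The whole $\tr=0$ versus $\tr>0$ dichotomy therefore sits in the first term. When $\tr=0$, the factor $e^{-t\tr}\equiv 1$ and that term equals $\frac{2}{p}\|f\|_p^p$ too, for every $s>0$, giving the total $\frac{4}{p}\|f\|_p^p$. When $\tr>0$, I would show $s\int_1^\infty t^{-1-\frac{sp}2}e^{-t\tr}\,dt \to 0$ as $s\to 0^+$: indeed $\int_1^\infty t^{-1-\frac{sp}2}e^{-t\tr}\,dt \le \int_1^\infty e^{-t\tr}\,dt = \frac{e^{-\tr}}{\tr} < \infty$ uniformly in $s$, so multiplying by $s$ and letting $s\to 0^+$ kills it. Adding the two contributions gives $0 + \frac{2}{p}\|f\|_p^p = \frac{2}{p}\|f\|_p^p$ in the case $\tr>0$.

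There is no real obstacle here; the only point requiring a word of care is justifying Fubini, i.e. that the integrand $p(X,Y,t)(|f(X)|^p+|f(Y)|^p)t^{-1-\frac{sp}2}$ is nonnegative (so Tonelli applies with no integrability precondition) and that the resulting single-variable integrals are finite for $s>0$ — which they are, since $\int_1^\infty t^{-1-\frac{sp}2}\,dt = \frac{2}{sp}<\infty$ and $f\in L^p$. So the proof is essentially: apply Tonelli to separate into two terms, use \eqref{stochcompl} and \eqref{aggstoch} to collapse the spatial integrals, evaluate $s\int_1^\infty t^{-1-\frac{sp}2}\,dt = \frac2p$ exactly, and estimate $s\int_1^\infty t^{-1-\frac{sp}2}e^{-t\tr}\,dt$ by $\frac{s}{\tr}e^{-\tr}\to 0$ when $\tr>0$ while it equals $\frac2p$ when $\tr=0$. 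Summing yields \eqref{statedue}.
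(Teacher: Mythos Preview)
Your proposal is correct and follows essentially the same approach as the paper: both use \eqref{stochcompl} and \eqref{aggstoch} to collapse the spatial integrals into $s\|f\|_p^p\int_1^\infty t^{-1-\frac{sp}{2}}(1+e^{-t\tr})\,dt$, then compute the first piece exactly as $\frac{2}{p}\|f\|_p^p$ and bound the second by $s\cdot\frac{e^{-\tr}}{\tr}\to 0$ when $\tr>0$.
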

\begin{proof}
By \eqref{stochcompl} and \eqref{aggstoch} we have
\begin{align*}
&s\int_1^\infty \int_{\RN}\int_{\RN} \frac{1}{t^{\frac{s p}2 +1}}p(X,Y,t)\left(|f(X)|^p+|f(Y)|^p\right)dYdX dt\\
&=s\int_1^\infty \int_{\RN}\frac{1}{t^{\frac{s p}2 +1}}|f(X)|^p P_t 1(X) dX dt + s\int_1^\infty \int_{\RN}\frac{1}{t^{\frac{s p}2 +1}}|f(Y)|^p P^*_t 1(Y) dYdt\\
&=s||f||^p_p \int_1^\infty\frac{1+e^{-t\tr}}{t^{\frac{s p}2 +1}}dt.
\end{align*}
If $\tr =0$ the desired conclusion readily follows from the previous identity. If instead $\tr>0$, it is enough to notice that 
$0\leq \int_1^\infty\frac{e^{-t\tr}}{t^{\frac{s p}2 +1}}dt\leq \int_1^\infty e^{-t\tr}dt=\frac{e^{-\tr}}{\tr},$
which implies
\begin{equation}\label{strB}
s\int_1^\infty\frac{e^{-t\tr}}{t^{\frac{s p}2 +1}}dt\underset{s\to 0^+}{\longrightarrow} 0,
\end{equation}
and concludes the proof.

\end{proof}

\begin{lemma}\label{L:tre}
Let $1\leq p <\infty$ and suppose $\tr >0$. If $f\in L^p$, then
\begin{equation}\label{statetre}
\underset{s\to 0^+}{\lim} s \int_1^\infty  \frac{1}{t^{\frac{s p}2 +1}} \int_{\RN} P_t\left(|f - f(X)|^p\right)(X) dX dt = \frac{2}{p}||f||_p^p.
\end{equation}
\end{lemma}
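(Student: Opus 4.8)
The plan is to isolate the "diagonal" part of the expression $P_t(|f-f(X)|^p)(X)$ as $t\to\infty$. First I would expand
\[
P_t\left(|f-f(X)|^p\right)(X)=\int_{\RN}p(X,Y,t)|f(Y)-f(X)|^p\,dY,
\]
and compare it with the quantity $\int_{\RN}p(X,Y,t)\left(|f(Y)|^p+|f(X)|^p\right)dY$, whose $s$-weighted integral over $(1,\infty)$ was computed in Lemma \ref{L:due} to converge to $\frac{2}{p}\|f\|_p^p$ in the case $\tr>0$ (using \eqref{strB}, the $e^{-t\tr}$ term drops out, leaving only the contribution of $P_t1=1$). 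So it suffices to show that the difference
\[
s\int_1^\infty\frac{1}{t^{\frac{sp}{2}+1}}\int_{\RN}\int_{\RN}p(X,Y,t)\Bigl(\bigl|f(Y)|^p+|f(X)|^p-|f(Y)-f(X)|^p\Bigr)dY\,dX\,dt
\]
tends to $0$ as $s\to0^+$. The elementary inequality $\bigl||a-b|^p-|a|^p-|b|^p\bigr|\le C_p\bigl(|a|^{p-1}|b|+|a||b|^{p-1}\bigr)$ (for $p\ge1$; for $p=1$ the bracket is simply bounded by $2\min(|a|,|b|)$ and the argument is even easier) reduces matters to bounding $s\int_1^\infty t^{-\frac{sp}{2}-1}\int\int p(X,Y,t)|f(X)|^{p-1}|f(Y)|\,dY\,dX\,dt$ and its symmetric companion.

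Second I would estimate these mixed terms. Fix the inner double integral $I(t)=\int_{\RN}|f(X)|^{p-1}\bigl(\int_{\RN}p(X,Y,t)|f(Y)|\,dY\bigr)dX=\int_{\RN}|f(X)|^{p-1}P_t(|f|)(X)\,dX$. By Proposition \ref{P:ptq} with $q=1$, $\|P_t|f|\|_\infty\le \frac{C}{V(t)}e^{-t\tr/\infty}\|f\|_1$ — more usefully, I would use $\|P_t|f|\|_p\le \frac{C}{V(t)^{1-1/p}}e^{-t\tr/p}\|f\|_1$, or, even more directly, Hölder to write $I(t)\le \|f\|_p^{p-1}\|P_t|f|\|_p$. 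Since $\tr>0$, Proposition \ref{P:ptq} gives $\|P_t|f|\|_p\le C e^{-t\tr/p}\|f\|_1$ for $t\ge1$ (using $V(t)\ge c_0$ for $t\ge1$, which holds whenever $\max\Re(\lambda)\ge0$; note $\tr>0$ forces some eigenvalue with positive real part, so \eqref{sqrtbound} applies). Hence $I(t)\le C\|f\|_p^{p-1}\|f\|_1 e^{-t\tr/p}$, and the symmetric term is handled the same way after swapping roles, using $P_t^\star$ and \eqref{aggstoch}. Therefore the difference is bounded by $C s\int_1^\infty t^{-\frac{sp}{2}-1}e^{-t\tr/p}\,dt\le C s\int_1^\infty e^{-t\tr/p}\,dt=C s\,\frac{p}{\tr}e^{-\tr/p}\to0$ as $s\to0^+$, exactly as in \eqref{strB}.

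There is one integrability subtlety I would need to address before invoking Fubini/Tonelli freely: a priori it is not clear that $\int_1^\infty t^{-\frac{sp}{2}-1}\int\int p|f(Y)-f(X)|^p$ is finite for a general $f\in L^p$, since we only assumed $f\in L^p$ in the statement, not $f\in\Bps$. However, the bound $|f(Y)-f(X)|^p\le 2^{p-1}(|f(Y)|^p+|f(X)|^p)$ together with Lemma \ref{L:due} already shows this integral is finite, so all the manipulations above (splitting the integrand, applying Tonelli to the nonnegative pieces) are justified. The main obstacle, such as it is, is organising the elementary-inequality reduction and the two applications of Proposition \ref{P:ptq} cleanly for all $p\ge1$ simultaneously; the analytic content is genuinely light because the decisive gain is the exponential factor $e^{-t\tr/p}$ coming from $\tr>0$, which — unlike in Lemma \ref{L:due} where it only killed one of two terms — here kills the entire error term, leaving precisely the $\frac{2}{p}\|f\|_p^p$ coming from the diagonal via $P_t1=1$.
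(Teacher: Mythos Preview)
Your approach is essentially the paper's: reduce via the elementary inequality $\bigl||a-b|^p-|a|^p-|b|^p\bigr|\le C_p(|a|^{p-1}|b|+|a||b|^{p-1})$ to Lemma \ref{L:due} plus the vanishing of the mixed terms, and kill the latter using H\"older and the exponential decay coming from $\tr>0$.

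There is, however, a slip in your estimate of the mixed term. You bound $\|P_t|f|\|_p\le C\,e^{-t\tr/p}\|f\|_1$ via Proposition \ref{P:ptq} with $q=1$, but the hypothesis is only $f\in L^p$, and for $p>1$ there is no reason for $\|f\|_1$ to be finite. The fix is immediate and is exactly what the paper does: apply Proposition \ref{P:ptq} with $q=p$ instead, giving $\|P_t|f|\|_p\le C(p)\,e^{-t\tr/p}\|f\|_p$ (no $V(t)$ factor appears, so your detour through \eqref{sqrtbound} is unnecessary here). For the symmetric term the paper writes $\int_{\RN}|f(X)|\,P_t(|f|^{p-1})(X)\,dX\le\|f\|_p\,\|P_t(|f|^{p-1})\|_{p'}$ and applies Proposition \ref{P:ptq} with $q=p'$ to $|f|^{p-1}\in L^{p'}$, obtaining the factor $e^{-t\tr/p'}$; your appeal to $P_t^\star$ is not needed. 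With these corrections your argument goes through and matches the paper's proof for $p>1$. For $p=1$ the paper uses the direct comparison $\bigl||f(Y)-f(X)|-|f(X)|\bigr|\le|f(Y)|$ together with \eqref{aggstoch}, which is essentially your $2\min(|a|,|b|)$ observation.
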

\begin{proof}
Assume first that $p>1$. 
We begin by observing that, for $f\in L^p$, we have
\begin{equation}\label{restip}
\underset{s\to 0^+}{\lim} s \int_1^\infty\int_{\RN}\int_{\RN}  \frac{p(X,Y,t)}{t^{\frac{s p}2 +1}}\left(|f(X)|^{p-1}|f(Y)|+|f(X)||f(Y)|^{p-1}\right)dY dX dt=0.
\end{equation}
To see \eqref{restip}, we observe that H\"older inequality and Proposition \ref{P:ptq} (applied with $q=p$, and $q=p'$), imply
\begin{align*}
&0\leq s \int_1^\infty\int_{\RN}\int_{\RN}  \frac{p(X,Y,t)}{t^{\frac{s p}2 +1}}\left(|f(X)|^{p-1}|f(Y)|+|f(X)||f(Y)|^{p-1}\right)dY dX dt\\
&=s\int_1^\infty\frac{1}{t^{\frac{s p}2 +1}}\int_{\RN}\left(|f(X)|^{p-1}P_t(|f|)(X)+|f(X)|P_t(|f|^{p-1})(X)\right)dX dt\\
&\leq s\int_1^\infty\frac{1}{t^{\frac{s p}2 +1}}\left(||f||_p^{p-1}||P_t(|f|)||_p+||f||_p||P_t(|f|^{p-1})||_{p'}\right)dt\\
&\leq s\int_1^\infty\frac{1}{t^{\frac{s p}2 +1}}\left(C(p)e^{-t\frac{\tr}{p}}||f||_p^{p}+C(p')e^{-t\frac{\tr}{p'}}||f||^p_p\right)dt\le \overline C(p)\  ||f||_p^{p}\ s \int_1^\infty\frac{e^{-t\frac{\tr}{p}}+e^{-t\frac{\tr}{p'}}}{t^{\frac{s p}2 +1}}dt.
\end{align*}
Arguing exactly as in \eqref{strB}, we see that the last term tends to $0$ as $s\to 0^+$. This shows \eqref{restip}.
To prove \eqref{statetre} we next exploit the following simple fact: there exists a positive constant $C_p$ such that
\begin{equation}\label{calculus}
||a-b|^p-|a|^p-|b|^p|\leq C_p\left(|a|^{p-1}|b|+|a||b|^{p-1}\right)\,\quad\mbox{ for all }a,b\in\R.
\end{equation}
This can be checked by noticing that the function $h:\R\smallsetminus\{0\}\longrightarrow\R$, 
$h(x)=\frac{|x-1|^p-|x|^p-1}{|x|^{p-1}+|x|}$,
has finite limits at $x=0^{\pm}$ and $x=\pm\infty$ and thus, in particular, it is globally bounded. Applying \eqref{calculus} with the choices $a=f(X)$ and $b=f(Y)$, we find
\begin{align*}
& \left|s \int_1^\infty \int_{\RN} \frac{P_t\left(|f - f(X)|^p\right)(X)}{t^{\frac{s p}2 +1}} dX dt  - s \int_1^\infty\int_{\RN}\int_{\RN}  \frac{p(X,Y,t)}{t^{\frac{s p}2 +1}} \left(|f(X)|^p + |f(Y)|^p\right) dY dX dt\right|\\
&= s \left|\int_1^\infty\int_{\RN}\int_{\RN}  \frac{p(X,Y,t)}{t^{\frac{s p}2 +1}}\left(|f(X)-f(Y)|^p-|f(X)|^p - |f(Y)|^p\right)dY dX dt\right|
\\
& \leq C_p\ s \int_1^\infty\int_{\RN}\int_{\RN}  \frac{p(X,Y,t)}{t^{\frac{s p}2 +1}} (|f(X)|^{p-1} |f(Y)| + |f(X)| |f(Y)|^{p-1}) dY dX dt.
\end{align*}
From this estimate and from \eqref{restip} we deduce that
\begin{align*}
&\underset{s\to 0^+}{\lim}  s \int_1^\infty  \frac{1}{t^{\frac{s p}2 +1}} \int_{\RN} P_t\left(|f - f(X)|^p\right)(X) dX dt\\
&=\underset{s\to 0^+}{\lim}  s \int_1^\infty\int_{\RN}\int_{\RN}  \frac{p(X,Y,t)}{t^{\frac{s p}2 +1}}\left(|f(X)|^p+|f(Y)|^p\right)dY dX dt.
\end{align*}
At this point, the desired conclusion \eqref{statetre} follows from \eqref{statedue} in Lemma \ref{L:due} in the case $p>1$. We thus turn the attention to the case $p=1$. For any $s>0$ we have by \eqref{stochcompl} 
$$
2||f||_1=s\int_1^{\infty}\frac{1}{t^{\frac{s}2 +1}}dt \int_{\RN}|f(X)|dX=s\int_1^{\infty}\int_{\RN}\int_{\RN}\frac{p(X,Y,t)}{t^{\frac{s}2 +1}}|f(X)|dYdXdt.
$$
This gives
\begin{align*}
&\left|s \int_1^\infty  \frac{1}{t^{\frac{s}2 +1}} \int_{\RN} P_t\left(|f - f(X)|\right)(X) dX dt - 2||f||_1\right|\\
&=s\left|\int_1^\infty\int_{\RN}\int_{\RN}  \frac{p(X,Y,t)}{t^{\frac{s}2 +1}} \left(|f(Y) - f(X)|-|f(X)|\right) dYdX dt\right|\\
&\leq s\int_1^\infty\int_{\RN}\int_{\RN}  \frac{p(X,Y,t)}{t^{\frac{s}2 +1}} |f(Y)|dYdX dt= ||f||_1\ s \int_1^\infty  \frac{e^{-t\tr}}{t^{\frac{s}2 +1}}dt,
\end{align*}
where in the last inequality we have used
\eqref{aggstoch}. From this estimate and \eqref{strB}, we see that \eqref{statetre} holds true also when $p=1$.

\end{proof}

\begin{lemma}\label{L:quattro}
Let $1\leq p <\infty$ and suppose $\tr =0$. If $f\in \So$, then
\begin{equation}\label{statequattro}
\underset{s\to 0^+}{\lim} s \int_1^\infty  \frac{1}{t^{\frac{s p}2 +1}} \int_{\RN} P_t\left(|f - f(X)|^p\right)(X) dX dt = \frac{4}{p}||f||_p^p.
\end{equation}
\end{lemma}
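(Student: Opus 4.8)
The plan is to split off from \eqref{statequattro} the main term already understood and to show that what is left over vanishes in the limit. Using the definition of $P_t$ together with \eqref{stochcompl}--\eqref{aggstoch} and the present hypothesis $\tr=0$, one has
$$\int_{\RN}P_t\bigl(|f-f(X)|^p\bigr)(X)\,dX=\int_{\RN}\int_{\RN}p(X,Y,t)|f(Y)-f(X)|^p\,dY\,dX=2||f||_p^p+R(t),$$
where $R(t)=\int_{\RN}\int_{\RN}p(X,Y,t)\bigl(|f(Y)-f(X)|^p-|f(X)|^p-|f(Y)|^p\bigr)dY\,dX$. Since $s\int_1^\infty t^{-\frac{sp}2-1}\,dt=\tfrac2p$, the contribution of the term $2||f||_p^p$ is exactly $\tfrac4p||f||_p^p$ for every $s>0$ (this is also the content of Lemma~\ref{L:due} in the case $\tr=0$), so the whole statement reduces to proving
$$E(s):=s\int_1^\infty\frac{R(t)}{t^{\frac{sp}2+1}}\,dt\ \underset{s\to0^+}{\longrightarrow}\ 0.$$
The key structural point is that the exponential gain $e^{-t\tr/p}$ that powered Lemma~\ref{L:tre} is now unavailable, so the smallness of $E(s)$ must be extracted from the blow-up of the volume function: because $\tr=0$ forces $\max\{\Re(\lambda)\mid\lambda\in\sigma(B)\}\ge 0$, Proposition~\ref{P:boom} (or Proposition~\ref{P:boom0}(i) when $N\ge 2$) gives $V(t)\ge c_0\sqrt t$ for $t\ge 1$, and \eqref{PtKt} yields the crude estimate $||P_t g||_\infty\le \frac{c_N}{V(t)}||g||_1$. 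These two facts are the engine.

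For $p>1$ I would bound $|R(t)|$ by the elementary inequality \eqref{calculus} with $a=f(X)$, $b=f(Y)$, which after integrating in $Y$ gives $|R(t)|\le C_p\bigl(\int_{\RN}|f(X)|^{p-1}P_t(|f|)(X)\,dX+\int_{\RN}|f(X)|P_t(|f|^{p-1})(X)\,dX\bigr)$. Then $||P_t(|f|)||_\infty\le\frac{c_N}{V(t)}||f||_1$ and $||P_t(|f|^{p-1})||_\infty\le\frac{c_N}{V(t)}\int_{\RN}|f|^{p-1}$, so, using that both $||f||_1$ and $\int_{\RN}|f|^{p-1}$ are finite for $f\in\So$ (here $p-1>0$ is used), one gets $|R(t)|\le \frac{C}{V(t)}\le\frac{C}{c_0\sqrt t}$; hence $|E(s)|\le C'\,s\int_1^\infty t^{-\frac{sp}2-\frac32}\,dt=\frac{2C's}{sp+1}\to0$. (If one wished to avoid $\int_{\RN}|f|^{p-1}<\infty$, one could instead apply Proposition~\ref{P:ptq} with $q=1$ together with Hölder's inequality, losing a bit on the power of $t$ but keeping it positive, which makes the argument work for any $f\in L^p$.)

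For $p=1$ the inequality \eqref{calculus} is vacuous, so I would use instead $\bigl||a-b|-|a|-|b|\bigr|\le 2\min(|a|,|b|)\le 2|a|^{1/2}|b|^{1/2}$, giving $|R(t)|\le 2\int_{\RN}|f(X)|^{1/2}P_t(|f|^{1/2})(X)\,dX\le 2c_N\bigl(\int_{\RN}|f|^{1/2}\bigr)^2V(t)^{-1}$, where $\int_{\RN}|f|^{1/2}<\infty$ since $f$ is Schwartz; then again $V(t)\ge c_0\sqrt t$ makes $|E(s)|\le C''\,s\int_1^\infty t^{-\frac s2-\frac32}\,dt=\frac{2C''s}{s+1}\to0$. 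Combining either case with the exact evaluation of the main term yields \eqref{statequattro}. I expect the only real (and mild) obstacle to be exactly this bookkeeping: arranging the cross term so that the lone surviving decay, the polynomial gain $V(t)^{-1}\sim t^{-1/2}$, is enough — which it is precisely because the prefactor $s$ annihilates the otherwise borderline-divergent $\int_1^\infty t^{-\frac{sp}2-\frac12}\frac{dt}{t}$ — and checking that the relevant fractional powers of $|f|$ are integrable, which is where the hypothesis $f\in\So$ (rather than merely $f\in L^p$) is convenient.
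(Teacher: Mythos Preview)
Your proof is correct. For $p>1$ it follows the same line as the paper: split off the main term (this is exactly Lemma~\ref{L:due}), bound the remainder via \eqref{calculus}, and kill the cross terms using the volume growth \eqref{sqrtbound}. The only cosmetic difference is that you use the crude $L^1\to L^\infty$ bound $||P_t g||_\infty\le c_N V(t)^{-1}||g||_1$ together with the finiteness of $\int|f|^{p-1}$ for Schwartz $f$, whereas the paper applies Proposition~\ref{P:ptq} in the form $L^1\to L^p$ and $L^1\to L^{p'}$ combined with H\"older (the alternative you yourself mention in parentheses). Your version gives a slightly faster decay $t^{-1/2}$ versus the paper's $t^{-1/(2p')}$ and $t^{-1/(2p)}$; both are more than enough.

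For $p=1$ your argument is genuinely different and noticeably shorter. The paper abandons the two-sided remainder estimate: it gets $\limsup\le 4||f||_1$ trivially from the triangle inequality, and then proves $\liminf\ge 4||f||_1$ by a compact-exhaustion argument (choose $K_\ve$ with $\int_{\RN\setminus K_\ve}|f|\le\ve$, restrict the double integral to $K_\ve\times(\RN\setminus K_\ve)$ and its mirror, and use \eqref{sqrtbound} only to show that $\int_{K_\ve}p(X,Y,t)\,dY$ is small for large $t$). You instead observe that $\bigl||a-b|-|a|-|b|\bigr|\le 2\min(|a|,|b|)\le 2|a|^{1/2}|b|^{1/2}$, which produces a symmetric cross term $\int|f|^{1/2}P_t(|f|^{1/2})$ that the $L^1\to L^\infty$ bound and \eqref{sqrtbound} handle directly, since $|f|^{1/2}\in L^1$ for $f\in\So$. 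This gives the limit in one stroke without splitting into $\limsup$ and $\liminf$. The trade-off is that your route leans more heavily on $f\in\So$ (you need $\int|f|^{1/2}<\infty$), while the paper's $p=1$ argument would go through for any $f\in L^1$; but since the lemma is stated for $f\in\So$, your approach is perfectly adequate and cleaner.
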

\begin{proof}
We begin by assuming $p>1$. Following the strategy of the proof of Lemma \ref{L:tre}, our aim is to prove 
\begin{equation}\label{restip0}
\underset{s\to 0^+}{\lim} s \int_1^\infty\int_{\RN}\int_{\RN}  \frac{p(X,Y,t)}{t^{\frac{s p}2 +1}}\left(|f(X)|^{p-1}|f(Y)|+|f(X)||f(Y)|^{p-1}\right)dY dX dt=0,
\end{equation}
see \eqref{restip}.
The main difference at this point consists in the fact that, being $\tr =0$, the ultracontractive estimate \eqref{tuttenorme} in Proposition \ref{P:ptq} no longer implies a decay of the semigroup in $L^p$ or $L^{p'}$ . On the other hand, since  $f\in\So$, it is in every $L^q$, and therefore we can combine the  $L^1\to L^p$ and $L^{1}\to L^{p'}$ decays in Proposition \ref{P:ptq} with the critical information contained in \eqref{sqrtbound} of Proposition \ref{P:boom}, and infer
{\allowdisplaybreaks
\begin{align*}
& \ \ \ s \int_1^\infty\int_{\RN}\int_{\RN}  \frac{p(X,Y,t)}{t^{\frac{s p}2 +1}}\left(|f(X)|^{p-1}|f(Y)|+|f(X)||f(Y)|^{p-1}\right)dY dX dt\\
&=s\int_1^\infty\frac{1}{t^{\frac{s p}2 +1}}\int_{\RN}\left(|f(X)|^{p-1}P_t(|f|)(X)+|f(X)|P_t(|f|^{p-1})(X)\right)dX dt\\
&\leq s\int_1^\infty\frac{1}{t^{\frac{s p}2 +1}}\left(||f||_p^{p-1}||P_t(|f|)||_p+||f||_p||P_t(|f|^{p-1})||_{p'}\right)dt
\\
& \leq C(p) s \int_1^\infty\frac{1}{t^{\frac{s p}2 +1}}\left(\frac{1}{V(t)^{1-\frac 1p}}||f||_1||f||_p^{p-1}+\frac{1}{V(t)^{1-\frac{1}{p'}}}|||f|^{p-1}||_1||f||_p\right)dt\\
& \le C'(p) s \left(||f||_1||f||_p^{p-1}\int_1^\infty\frac{1}{t^{\frac{s p}2 +1 + \frac{1}{2p'}}}dt+ |||f|^{p-1}||_1||f||_p\int_1^\infty\frac{1}{t^{\frac{s p}2 +1 + \frac{1}{2p}}}dt\right)\\
&\le 2 C'(p) s\left(\frac{1}{sp+\frac{1}{p'}} ||f||_1||f||_p^{p-1} + \frac{1}{sp+\frac{1}{p}} |||f|^{p-1}||_1||f||_p\right).
\end{align*}
}
Since the last term tends to $0$ as $s\to 0^+$, we conclude that \eqref{restip0} does hold. At this point we argue as in the proof of Lemma \ref{L:tre}. Using \eqref{calculus}, we deduce from \eqref{restip0} that
{\allowdisplaybreaks
\begin{align*}
&\underset{s\to 0^+}{\lim}  s \int_1^\infty\int_{\RN}\int_{\RN}  \frac{p(X,Y,t)}{t^{\frac{s p}2 +1}}|f(X)-f(Y)|^pdY dX dt\\
&=\underset{s\to 0^+}{\lim}  s \int_1^\infty\int_{\RN}\int_{\RN}  \frac{p(X,Y,t)}{t^{\frac{s p}2 +1}}\left(|f(X)|^p+|f(Y)|^p\right)dY dX dt.
\end{align*}}
We know from \eqref{statedue} in Lemma \ref{L:due} that the common value of the previous limits is $\frac{4}{p}||f||_p^p$. This proves the desired conclusion \eqref{statequattro} in the case $p>1$.

We are left with analysing the case $p=1$. By \eqref{stochcompl} and \eqref{aggstoch} (recall that we are assuming $\tr=0$), we have
{\allowdisplaybreaks
\begin{align*}
&  s \int_1^\infty  \frac{1}{t^{\frac{s}2 +1}} \int_{\RN} P_t\left(|f - f(X)|\right)(X) dX dt\leq s\int_1^\infty\int_{\RN}\int_{\RN}  \frac{p(X,Y,t)}{t^{\frac{s}2 +1}} \left(|f(Y)|+|f(X)|\right) dYdX dt\\
&=2s||f||_1\int_1^\infty  \frac{1}{t^{\frac{s}2 +1}}dt=4||f||_1.
\end{align*}}
This trivially implies
$$\underset{s\to 0^+}{\limsup} \,\,\, s\int_1^\infty  \frac{1}{t^{\frac{s}2 +1}} \int_{\RN} P_t\left(|f - f(X)|\right)(X) dX dt\leq 4||f||_1.$$
In order to finish the proof of the lemma, we are left with showing that
\begin{equation}\label{claiminf}
\underset{s\to 0^+}{\liminf} \,\,\, s\int_1^\infty  \frac{1}{t^{\frac{s}2 +1}} \int_{\RN} P_t\left(|f - f(X)|\right)(X) dX dt\geq 4||f||_1.
\end{equation}
With this objective in mind, fix $\ve>0$. Since $f\in L^1$, we can find a compact set $K_\ve\subset \RN$ such that
\begin{equation}\label{decideK}
\int_{\RN\smallsetminus K_\ve}|f(\xi)|d\xi \leq \ve.
\end{equation}
We now have
{\allowdisplaybreaks
\begin{align*}
& s \int_1^\infty  \frac{1}{t^{\frac{s}2 +1}} \int_{\RN} P_t\left(|f - f(X)|\right)(X) dX dt=s\int_1^\infty\frac{1}{t^{\frac{s}2 +1}}\int_{\RN}\int_{\RN}p(X,Y,t)|f(Y) - f(X)|dY dX dt\\
&\geq s\int_1^\infty\frac{1}{t^{\frac{s}2 +1}}\int_{K_\ve}\int_{\RN\smallsetminus K_\ve}p(X,Y,t)|f(Y) - f(X)|dY dX dt +\\
&+s\int_1^\infty\frac{1}{t^{\frac{s}2 +1}}\int_{\RN\smallsetminus K_\ve} \int_{K_\ve} p(X,Y,t)|f(Y) - f(X)|dY dX dt\\
&\geq s\int_1^\infty\frac{1}{t^{\frac{s}2 +1}}\int_{K_\ve}\int_{\RN\smallsetminus K_\ve}p(X,Y,t)\left(|f(X)| - |f(Y)|\right)dY dX dt \\
&+s\int_1^\infty\frac{1}{t^{\frac{s}2 +1}}\int_{\RN\smallsetminus K_\ve} \int_{K_\ve} p(X,Y,t)\left(|f(Y)| - |f(X)|\right)dY dX dt\\
&= s\int_1^\infty\frac{1}{t^{\frac{s}2 +1}}\int_{K_\ve}|f(X)|\left(1-\int_{K_\ve}p(X,Y,t)dY\right) dX dt \\
&+s\int_1^\infty\frac{1}{t^{\frac{s}2 +1}} \int_{K_\ve}|f(Y)|\left(1-\int_{K_\ve} p(X,Y,t) dX\right)dY dt\\
&-s\int_1^\infty\frac{1}{t^{\frac{s}2 +1}}\int_{K_\ve}\int_{\RN\smallsetminus K_\ve}p(X,Y,t)|f(Y)|dY dX dt \\
&-s\int_1^\infty\frac{1}{t^{\frac{s}2 +1}}\int_{\RN\smallsetminus K_\ve} \int_{K_\ve} p(X,Y,t)|f(X)|dY dX dt,
\end{align*}
where in the last equality we used \eqref{stochcompl} and \eqref{aggstoch}. We can rewrite the previous inequality as follows
\begin{align}\label{puguuno}
& s\int_1^\infty  \frac{1}{t^{\frac{s}2 +1}} \int_{\RN} P_t\left(|f - f(X)|\right)(X) dX dt
\\
&\geq s\int_1^\infty\frac{1}{t^{\frac{s}2 +1}}dt\int_{K_\ve}|f(X)|dX+s\int_1^\infty\frac{1}{t^{\frac{s}2 +1}}dt \int_{K_\ve}|f(Y)|dY
\nonumber\\
&-s\int_1^\infty\frac{1}{t^{\frac{s}2 +1}}\int_{K_\ve}|f(X)|\int_{K_\ve}p(X,Y,t)dYdX dt \notag\\
&-s\int_1^\infty\frac{1}{t^{\frac{s}2 +1}} \int_{K_\ve}|f(Y)|\int_{K_\ve} p(X,Y,t) dXdY dt
\nonumber\\
&-s\int_1^\infty\frac{1}{t^{\frac{s}2 +1}}\int_{\RN\smallsetminus K_\ve}|f(Y)|\int_{K_\ve}p(X,Y,t)dX dY dt 
\nonumber\\
&-s\int_1^\infty\frac{1}{t^{\frac{s}2 +1}}\int_{\RN\smallsetminus K_\ve}|f(X)| \int_{K_\ve} p(X,Y,t)dY dX dt.
\nonumber
\end{align}
By \eqref{decideK}, together with \eqref{stochcompl}, \eqref{aggstoch}, we know that}
{\allowdisplaybreaks
\begin{align*}
&s\int_1^\infty\frac{1}{t^{\frac{s}2 +1}}\int_{\RN\smallsetminus K_\ve}|f(Y)|\int_{K_\ve}p(X,Y,t)dX dY dt \\
&+s\int_1^\infty\frac{1}{t^{\frac{s}2 +1}}\int_{\RN\smallsetminus K_\ve}|f(X)| \int_{K_\ve} p(X,Y,t)dY dX dt\\
&\leq s\int_1^\infty\frac{1}{t^{\frac{s}2 +1}} dt \int_{\RN\smallsetminus K_\ve}|f(Y)|dY +\int_1^\infty\frac{1}{t^{\frac{s}2 +1}} dt\int_{\RN\smallsetminus K_\ve}|f(X)| dX\leq 4\ve.
\end{align*}}
On the other hand, using the expression \eqref{PtKt} of $p(X,Y,t)$ we obtain
{\allowdisplaybreaks
\begin{align*}
&s\int_1^\infty\frac{1}{t^{\frac{s}2 +1}}\int_{K_\ve}|f(X)|\int_{K_\ve}p(X,Y,t)dYdX dt+s\int_1^\infty\frac{1}{t^{\frac{s}2 +1}} \int_{K_\ve}|f(Y)|\int_{K_\ve} p(X,Y,t) dXdY dt\\
&\leq c_N\ s |K_\ve| \int_1^\infty\frac{dt}{t^{\frac{s}2 +1}V(t)} \int_{K_\ve}|f(X)|dX+c_N\ s |K_\ve|\int_1^\infty\frac{dt}{t^{\frac{s}2 +1}V(t)} \int_{K_\ve}|f(Y)|dY\\
&\leq 2s \frac{c_N}{c_0}|K_\ve|\ ||f||_1\int_1^\infty\frac{dt}{t^{s +1}}  =\frac{4s}{s+1} \frac{c_N}{c_0}|K_\ve|\ ||f||_1,
\end{align*}
where in the last inequality we have used \eqref{sqrtbound} in Proposition \ref{P:boom}.
Inserting the previous two estimates in \eqref{puguuno}, and using again \eqref{decideK} we deduce
{\allowdisplaybreaks
\begin{align*}
&s\int_1^\infty  \frac{1}{t^{\frac{s}2 +1}} \int_{\RN} P_t\left(|f - f(X)|\right)(X) dX dt\\
&\geq 2s\int_1^\infty\frac{1}{t^{\frac{s}2 +1}}dt\int_{K_\ve}|f(X)|dX-\frac{4s}{s+1} \frac{c_N}{c_0}|K_\ve|\ ||f||_1-4\ve\\
&\geq 4\left(||f||_1 - \ve\right)-\frac{4s}{s+1} \frac{c_N}{c_0}|K_\ve|||f||_1-4\ve= 4||f||_1 - 8\ve-\frac{4s}{s+1} \frac{c_N}{c_0}|K_\ve|\ ||f||_1,
\end{align*}}
which implies
$$
\underset{s\to 0^+}{\liminf} \,\,\, s\int_1^\infty  \frac{1}{t^{\frac{s}2 +1}} \int_{\RN} P_t\left(|f - f(X)|\right)(X) dX dt\geq 4||f||_1 - 8\ve.
$$
The arbitrariness of $\ve$ concludes the proof of \eqref{claiminf}, and of the lemma as well.}

\end{proof}

We are finally in a position to provide the

\begin{proof}[Proof of Theorem \ref{T:MSallp}]
Let $p\ge 1$ and assume that $f\in \underset{0<\sigma<1}{\bigcup}\mathfrak B^\sA_{\sigma,p}$. Suppose that $\sigma \in (0,1)$ is such that $f\in \mathfrak B^\sA_{\sigma,p}$. As before, for every $0<s\le \sigma$ we write
{\allowdisplaybreaks
\begin{align*}
s \mathscr N^\sA_{s,p}(f)^p & =
s \int_0^1  \frac{1}{t^{\frac{s p}2 +1}} \int_{\RN} P_t\left(|f - f(X)|^p\right)(X) dX dt
\\
& + s \int_1^{\infty}  \frac{1}{t^{\frac{s p}2 +1}} \int_{\RN} P_t\left(|f - f(X)|^p\right)(X) dX dt.
\end{align*}
Then, under the assumption $\tr>0$, the desired conclusion \eqref{azza} readily follows from Lemma \ref{L:uno} and Lemma \ref{L:tre}}. 

{\allowdisplaybreaks
We are thus left with analysing the case  $\tr=0$. Our first observation is that
in view of the crucial Proposition \ref{P:density} there exists a sequence $\{f_n\}\in \So$ such that:
\begin{equation}\label{den}
||f_n-f||_p \ \underset{n\to \infty}{\longrightarrow}\ 0,\ \ \ \ \ \mathscr N^\sA_{\sigma,p}(f_n-f)\ \underset{n\to \infty}{\longrightarrow}\ 0.
\end{equation}
In particular, given $\ve>0$ there exists $n_1(\ve)\in \mathbb N$ such that 
\begin{equation}\label{ullahop}
n\ge n_1(\ve)\ \Longrightarrow\ \frac{4}{p}\left|||f_n||_p^p- ||f||_p^p\right|\leq \frac{\ve}{3}. 
\end{equation}
Now, for every $0<s\leq \sigma$ and  $n\in\mathbb{N}$ we bound
\begin{align}\label{denden}
\left|s \mathscr N^\sA_{s,p}(f)^p - \frac{4}{p} ||f||_p^p\right|&\leq s\left|\mathscr N^\sA_{s,p}(f)^p- \mathscr N^\sA_{s,p}(f_n)^p\right| + \left|s \mathscr N^\sA_{s,p}(f_n)^p - \frac{4}{p} ||f_n||_p^p\right| 
\\
&+ \frac{4}{p}\left|||f_n||_p^p- ||f||_p^p\right|.
\notag
\end{align}
On the other hand, by exploiting \eqref{triangular}, and \eqref{bastasigmauno} in Lemma \ref{L:es}, we obtain
\begin{align*}
&s\left|\mathscr N^\sA_{s,p}(f)^p- \mathscr N^\sA_{s,p}(f_n)^p\right|\leq s\left(\max\left\{\mathscr N^\sA_{s,p}(f),\mathscr N^\sA_{s,p}(f_n)\right\}\right)^{p-1}\left|\mathscr N^\sA_{s,p}(f)- \mathscr N^\sA_{s,p}(f_n)\right|\\
&\leq \left(\max\left\{s^{\frac 1p}\mathscr N^\sA_{s,p}(f),s^{\frac 1p}\mathscr N^\sA_{s,p}(f_n)\right\}\right)^{p-1}s^{\frac 1p}\mathscr N^\sA_{s,p}(f-f_n)\\
&\leq \left(\max\left\{\sigma \mathscr N^\sA_{\sigma,p}(f)^p + \frac{2^{p+1}}{p} ||f||^p_p,\sigma \mathscr N^\sA_{\sigma,p}(f_n)^p + \frac{2^{p+1}}{p} ||f_n||^p_p\right\}\right)^{\frac{p-1}{p}}\\
&\times \left(\sigma \mathscr N^\sA_{\sigma,p}(f-f_n)^p + \frac{2^{p+1}}{p} ||f-f_n||^p_p\right)^{\frac 1p}.
\end{align*}
What is critical here is that the right-hand side of the previous inequality is independent of $s \in (0,\sigma]$, and that in view of \eqref{den} above it converges to $0$ as $n\to\infty$. Hence, there exists $n_2(\ve,\sigma)\in\N$ such that for every $s \in (0,\sigma]$ one has
\begin{equation}\label{uhuh}
n\geq n_2(\ve,\sigma)\ \Longrightarrow\ s\left|\mathscr N^\sA_{s,p}(f)^p- \mathscr N^\sA_{s,p}(f_n)^p\right|\leq \frac{\ve}{3}.
\end{equation}
If we let $n_3(\ve,\sigma)=\max\{n_2(\ve,\sigma), n_1(\ve)\}$, and we fix $\bar n \ge n_3(\ve,\sigma)$, then in view of \eqref{ullahop}, \eqref{denden}  and \eqref{uhuh}, for any $0<s\leq\sigma$ we have
\[
\left|s \mathscr N^\sA_{s,p}(f)^p - \frac{4}{p} ||f||_p^p\right| \le \frac 23 \ve + \left|s \mathscr N^\sA_{s,p}(f_{\bar n})^p - \frac{4}{p} ||f_{\bar n}||_p^p\right|.
\]
At this point we invoke Lemma \ref{L:uno} and Lemma \ref{L:quattro}. Since $f_{\bar n}\in \So$, the combination of these two results allows to conclude that $\underset{s\to 0^+}{\lim} s \mathscr N^\sA_{s,p}(f_{\bar n})^p = \frac{4}{p}||f_{\bar n}||_p^p$.
Therefore, there exists $\bar s = \bar s(\ve,\sigma)< \sigma$ such that   
\begin{equation}\label{veroperS}
0<s<\bar s\ \Longrightarrow\ \left|s \mathscr N^\sA_{s,p}(f_{\bar n})^p - \frac{4}{p} ||f_{\bar n}||_p^p\right|\leq \frac{\ve}{3}.
\end{equation}
Substituting \eqref{veroperS} in the above inequality shows that 
$$0<s<\bar{s}\ \Longrightarrow\ \left|s \mathscr N^\sA_{s,p}(f)^p - \frac{4}{p} ||f||_p^p\right|\leq \ve.
$$ 
This proves the desired conclusion \eqref{azza} also in the case $\tr=0$, thus completing the proof of the theorem.}

\end{proof}


\section{Limiting behaviour of the fractional powers as $s\to 0^+$}\label{S:fracpow}

In this section we analyse the limiting behaviour in $L^p$  of the fractional powers \eqref{defpower} as $s\to 0^+$. In this direction, the main results are Theorem \ref{T:2} and Proposition \ref{P:balap1not} below. 
\begin{theorem}\label{T:2}
Let $1<p<\infty$, and assume \eqref{trace}. If $f\in \underset{0<s<1}{\bigcup} \Bps$, then we have
\begin{equation}\label{yoyo}
\underset{s\to 0^+}{\lim} \As f = f\ \ \ \text{in}\ \Lp.
\end{equation}
When $p = 1$ the limit relation \eqref{yoyo} continues to be valid if $\tr  >0$, but it fails when $\tr = 0$. In such case, in fact, for every nontrivial $f\in\So$, with $f\geq 0$, the $\underset{s\to 0^+}{\lim}\ (-\sA)^s f$ does not exist in $L^1$.
\end{theorem}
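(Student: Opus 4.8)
The plan is to follow the scheme used for Theorem~\ref{T:MSallp}: for $1\le p<\infty$ (with $\tr>0$ in the case $p=1$) I would first establish \eqref{yoyo} for $f\in\So$, then pass to a general $f\in\mathfrak B^\sA_{\sigma,p}$ by density, and finally treat the failure at $p=1$, $\tr=0$ separately. For $f\in\So$, starting from \eqref{defpower} and the identity \eqref{identity} one writes
\[
\As f - f = -\frac{s}{\Gamma(1-s)}\left(\int_0^1\frac{(P_tf-f)+(1-e^{-t})f}{t^{1+s}}\,dt+\int_1^\infty\frac{P_tf-e^{-t}f}{t^{1+s}}\,dt\right),
\]
and estimates the $L^p$ norm of the right-hand side by Minkowski's integral inequality. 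On $(0,1)$ one uses $\sA f\in L^p$ and $P_tf-f=\int_0^tP_\tau(\sA f)\,d\tau$ together with contractivity to get $\|P_tf-f\|_p\le t\|\sA f\|_p$, so that contribution is $O(s)$. On $(1,\infty)$, when $p>1$ I would combine the $L^1\to L^p$ ultracontractivity of Proposition~\ref{P:ptq} with the lower bound $V(t)\ge c_0\sqrt t$ of Proposition~\ref{P:boom} (available since \eqref{trace} forces $\max\{\Re\lambda:\lambda\in\sigma(B)\}\ge0$) to get $\|P_tf\|_p\lesssim t^{-\frac12(1-1/p)}\|f\|_1$, so that $\int_1^\infty t^{-1-s}\|P_tf\|_p\,dt\lesssim (s+\tfrac12(1-1/p))^{-1}$ stays bounded as $s\to0^+$ and its product with $\tfrac{s}{\Gamma(1-s)}$ vanishes; when $p=1$ and $\tr>0$ the same runs with the genuinely exponential bound $\|P_tf\|_1\le e^{-t\tr}\|f\|_1$ from \eqref{aggstoch}. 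The $e^{-t}f$ terms are trivially negligible.

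For the passage to general $f$, the crucial ingredient is a bound uniform in $s$: there is $C=C(\sigma,p)>0$ with $\|\As g-g\|_p\le C\big(\|g\|_p+\mathscr N^\sA_{\sigma,p}(g)\big)$ for all $g\in\mathfrak B^\sA_{\sigma,p}$ and all $s\in(0,\sigma/4]$. In the above decomposition applied to $g$, the $(1,\infty)$ and $(1-e^{-t})$ contributions are $\le C\|g\|_p$ by contractivity $\|P_tg\|_p\le\|g\|_p$; for the $(0,1)$ contribution one uses $\|P_tg-g\|_p^p\le\int_{\RN}P_t(|g-g(X)|^p)(X)\,dX=:\Phi_g(t)$, Hölder's inequality in $t$ with exponents $p$ and $p'$, and $\int_0^1 t^{-1-\sigma p/2}\Phi_g(t)\,dt\le\mathscr N^\sA_{\sigma,p}(g)^p$, the residual factor $\big(\int_0^1 t^{p'(\frac1p+\frac{\sigma}{2}-1-s)}\,dt\big)^{1/p'}$ being finite and uniformly bounded for $s\le\sigma/4$ since the exponent exceeds $-1$ there. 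Granting this, pick $f_n\in C^\infty_0$ with $\|f_n-f\|_p\to0$ and $\mathscr N^\sA_{\sigma,p}(f_n-f)\to0$ (Proposition~\ref{P:density}) and split $\|\As f-f\|_p\le\|\As(f-f_n)-(f-f_n)\|_p+\|\As f_n-f_n\|_p$: the first term is small for $n$ large, uniformly in $s$, by the uniform bound, and the second tends to $0$ as $s\to0^+$ by the Schwartz case.

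For the failure at $p=1$, $\tr=0$, I would argue by contradiction. Fix nontrivial $f\in\So$ with $f\ge0$. Since $\int_0^\infty t^{-1-s}\|P_tf-f\|_1\,dt<\infty$ (by the bounds used above), Tonelli's theorem applies, and using $\int_{\RN}P_tf(X)\,dX=e^{-t\tr}\int_{\RN}f(X)\,dX=\int_{\RN}f(X)\,dX$ (a consequence of \eqref{aggstoch} with $\tr=0$) we get $\int_{\RN}\As f(X)\,dX=0$ for every $s\in(0,1)$; hence, if $\As f\to g$ in $L^1$, then $\int_{\RN}g=0$. On the other hand $\tr=0$ forces $\max\{\Re\lambda:\lambda\in\sigma(B)\}\ge0$, so Proposition~\ref{P:balapoint}(i) gives $\As f(X)\to f(X)$ pointwise; extracting an a.e.\ convergent subsequence from the supposed $L^1$-convergent one forces $g=f$ a.e., whence $\int_{\RN}g=\|f\|_1>0$, a contradiction. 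Therefore the $L^1$ limit does not exist.

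The step I expect to be the main obstacle is the uniform-in-$s$ estimate of the density argument: one must choose the Hölder exponents so that the small-time integral is not merely finite but bounded \emph{uniformly} as $s\to0^+$, using only $g\in\mathfrak B^\sA_{\sigma,p}$ and not Schwartz regularity. The negative part is largely bookkeeping; its conceptual content is simply that the $L^1$ decay of $P_t$ that powered the positive statement is exactly what is missing when $\tr=0$, while the identity $\int_{\RN}\As f\equiv0$ persists.
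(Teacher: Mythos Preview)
Your proposal is correct. For the positive statement your approach coincides with the paper's: the uniform-in-$s$ bound $\|\As g-g\|_p\le C\big(\|g\|_p+\mathscr N^\sA_{\sigma,p}(g)\big)$ is precisely Lemma~\ref{valdylemma} (combine \eqref{bastasigmaunobis}, respectively \eqref{bastasigmaunobispuno}, with the triangle inequality), and the Schwartz-plus-density step is Lemma~\ref{P:balap0}. One organisational difference: when $\tr>0$ the paper observes that the large-time integral $\frac{s}{\Gamma(1-s)}\int_1^\infty t^{-1-s}P_tf\,dt$ already vanishes in $L^p$ for \emph{arbitrary} $f\in L^p$ by contractivity (Lemma~\ref{P:balapg}), so no density is needed in that regime; you run the density scheme uniformly, which is correct but slightly redundant there. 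A minor point: your H\"older split at $p=1$ should be read as $L^1$--$L^\infty$, which is exactly what the paper does in \eqref{bastasigmaunobispuno}.

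For the negative statement you take a genuinely different and shorter route. The paper proves the quantitative fact $\|(-\sA)^s f-f\|_1\to\|f\|_1$ (Lemma~\ref{P:balap1notdiff}, via a $K_\ve$/$V(t)$ argument parallel to Lemma~\ref{L:quattro}) and combines it with the pointwise convergence of Proposition~\ref{P:balapoint}(i) to reach the contradiction. Your argument---$\int_{\RN}\As f\equiv 0$ by Fubini and \eqref{aggstoch}, while any $L^1$ limit would have to equal the pointwise limit $f$ along an a.e.\ convergent subsequence, forcing $\int_{\RN} f=0$---bypasses Lemma~\ref{P:balap1notdiff} entirely. The paper's longer computation is not wasted, however: it is reused to establish Proposition~\ref{P:balap1not} ($\|(-\sA)^s f\|_1\to 2\|f\|_1$), which your shortcut does not deliver. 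One nit: since $P_tf-f$ is signed, the interchange you justify by absolute integrability is Fubini, not Tonelli.
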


Theorem \ref{T:2} highlights the special place of $L^1$ in connection with the limiting behaviour of the fractional powers $\As$. A trivial consequence of the above result is that, when $\tr >0$, if $f\in \underset{0<s<1}{\bigcup} \Bps$, then $||\As f||_1 \underset{s\to 0^+}{\longrightarrow} ||f||_1$. This is somewhat close in spirit to Theorem \ref{T:MSallp}. The following result completes the picture by highlighting the different behaviour of $(-\sA)^s$ in $L^1$ when $\tr=0$.

\begin{proposition}\label{P:balap1not}
Let $\tr = 0$, and consider $f\in \underset{0<s<1}{\bigcup}\mathfrak B^\sA_{s,1}$, such that $f\ge 0$. Then,
\[
\underset{s\to 0^+}{\lim}\ ||(-\sA)^s f||_1 = 2||f||_1.
\]
\end{proposition}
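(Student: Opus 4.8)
The plan is to work directly from the defining formula \eqref{defpower}, splitting the integral in $t$ at $t=1$ and exploiting the two features that the hypotheses $f\ge 0$ and $\tr=0$ make available: the conservation of mass $\|P_tf\|_1=e^{-t\tr}\|f\|_1=\|f\|_1$, which follows at once from \eqref{stochcompl}--\eqref{aggstoch} and Tonelli's theorem; and the pointwise bound $0\le P_tf(X)\le \frac{c_N}{V(t)}\|f\|_1\le \frac{c_N}{c_0\sqrt t}\|f\|_1$ for every $t\ge1$, which is available precisely because $\tr=0$ forces $\max\{\Re(\lambda)\mid\lambda\in\sigma(B)\}\ge 0$ and hence puts us in the regime \eqref{sqrtbound} of Proposition \ref{P:boom}. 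Fix $\sigma\in(0,1)$ with $f\in\mathfrak B^\sA_{\sigma,1}$, and for $0<s\le \sigma/2$ write $(-\sA)^s f=I_s+II_s$, where
\[
I_s(X)=\tfrac{s}{\Gamma(1-s)}\int_0^1 t^{-1-s}\big(f(X)-P_tf(X)\big)\,dt,\qquad II_s(X)=\tfrac{s}{\Gamma(1-s)}\int_1^\infty t^{-1-s}\big(f(X)-P_tf(X)\big)\,dt.
\]
That both integrals converge for a.e.\ $X$, and that $(-\sA)^s f\in L^1$, will follow from the two estimates below via Tonelli's theorem (for the $(1,\infty)$ part one also uses $|f-P_tf|\le f+P_tf$).

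First I would dispose of $I_s$. From $f(X)-P_tf(X)=\int_{\RN}p(X,Y,t)(f(X)-f(Y))\,dY$ and \eqref{stochcompl} one gets $\|f-P_tf\|_1\le \int_{\RN}P_t(|f-f(X)|)(X)\,dX$; since $t^{-1-s}\le t^{-1-\sigma/2}$ on $(0,1)$ whenever $s\le\sigma/2$, comparing with \eqref{sn} gives
\[
\|I_s\|_1\ \le\ \tfrac{s}{\Gamma(1-s)}\int_0^1 t^{-1-\frac\sigma2}\!\int_{\RN}P_t\big(|f-f(X)|\big)(X)\,dX\,dt\ \le\ \tfrac{s}{\Gamma(1-s)}\,\mathscr N^\sA_{\sigma,1}(f)\ \underset{s\to 0^+}{\longrightarrow}\ 0 .
\]
This step is where the hypothesis $f\in\bigcup_{0<\sigma<1}\mathfrak B^\sA_{\sigma,1}$ is genuinely needed.

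Next, for $II_s$ I would use $\int_1^\infty t^{-1-s}\,dt=\tfrac1s$ to write $II_s(X)=\tfrac{f(X)}{\Gamma(1-s)}-III_s(X)$, where $III_s(X):=\tfrac{s}{\Gamma(1-s)}\int_1^\infty t^{-1-s}P_tf(X)\,dt\ge 0$ (here $f\ge0$ forces $P_tf\ge0$). Two properties of $III_s$ are immediate: by Tonelli together with $\|P_tf\|_1=\|f\|_1$ (from \eqref{aggstoch} and $\tr=0$), one has $\|III_s\|_1=\tfrac{s}{\Gamma(1-s)}\|f\|_1\int_1^\infty t^{-1-s}\,dt=\tfrac{\|f\|_1}{\Gamma(1-s)}\to\|f\|_1$; and by the pointwise bound above, $0\le III_s(X)\le \tfrac{s}{\Gamma(1-s)}\tfrac{c_N\|f\|_1}{c_0}\int_1^\infty t^{-\frac32-s}\,dt=\tfrac{s}{\Gamma(1-s)}\tfrac{2c_N\|f\|_1}{c_0(1+2s)}\to 0$ uniformly in $X$. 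Now $\bigl|\,\|(-\sA)^s f\|_1-\|\tfrac{f}{\Gamma(1-s)}-III_s\|_1\,\bigr|\le\|I_s\|_1\to0$, so it is enough to show $\|\tfrac{f}{\Gamma(1-s)}-III_s\|_1\to 2\|f\|_1$. For nonnegative $g,h\in L^1$ one has the elementary identity $\|g-h\|_1=\|g\|_1+\|h\|_1-2\int_{\RN}\min(g,h)$; taking $g=\tfrac{f}{\Gamma(1-s)}$ and $h=III_s$, the first two terms tend to $2\|f\|_1$, while $\min(g,h)\le III_s\to0$ pointwise and $\min(g,h)\le g\le f\in L^1$ (because $\Gamma(1-s)\ge1$ for $s\in[0,1)$), so $\int_{\RN}\min(g,h)\to0$ by dominated convergence. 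This gives $\lim_{s\to0^+}\|(-\sA)^s f\|_1=2\|f\|_1$.

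I do not expect a serious obstacle, but two points deserve care. The first is the step $\|I_s\|_1\to0$ above: it is exactly the Besov regularity of $f$ that kills this term, and without some such regularity even the principal value in \eqref{defpower} need not converge in $L^1$. The second, conceptually the heart of the matter, is the uniform vanishing $III_s\to0$, which rests entirely on the slow-diffusion lower bound $V(t)\gtrsim\sqrt t$ of Proposition \ref{P:boom} --- itself a consequence of $\tr=0$. Indeed, the total mass of $III_s$ stays equal to $\|f\|_1/\Gamma(1-s)$ but is carried off to spatial infinity as $s\to0^+$; consequently the negative part of $(-\sA)^s f$ contributes, in the limit, the full mass $\|f\|_1$ on top of the positive part (which behaves like $f$). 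This is precisely the mechanism behind the factor $2$, and it is the same mechanism responsible for the failure of \eqref{yoyo} in $L^1$ when $\tr=0$ recorded in Theorem \ref{T:2}.
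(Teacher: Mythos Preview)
Your proof is correct and shares the paper's overall architecture: split the defining integral at $t=1$, kill the small-time piece using the Besov regularity exactly as in \eqref{mappingpowerspugu1} (this is the content of Lemma \ref{L:balapg}), and then analyse the large-time piece using the mass conservation $\|P_tf\|_1=\|f\|_1$ together with the pointwise bound $P_tf(X)\le c_N\|f\|_1/V(t)$ and the growth estimate \eqref{sqrtbound}. The genuine difference lies in how the lower bound for the large-time contribution is obtained. The paper proves the inequality \eqref{claiminf2} by fixing $\varepsilon>0$, choosing a compact set $K_\varepsilon$ carrying almost all the mass of $f$, and then splitting the $L^1$ norm over $K_\varepsilon$ and its complement; on $K_\varepsilon$ the term $\int_1^\infty t^{-1-s}P_tf\,dt$ is shown to be small via $V(t)\gtrsim\sqrt t$, while off $K_\varepsilon$ it is $f$ that is small. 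You bypass this $\varepsilon$-argument entirely by invoking the pointwise identity $|g-h|=g+h-2\min(g,h)$ for nonnegative functions, observing that $III_s\to 0$ \emph{uniformly} on $\RN$, and dominating $\min(f/\Gamma(1-s),III_s)\le f$ to apply dominated convergence. Your route is cleaner and more transparent about the mechanism (the mass of $III_s$ stays constant but escapes to infinity), while the paper's compact-set argument is perhaps more robust in situations where a uniform pointwise bound on the semigroup is unavailable; here, with \eqref{sqrtbound} in hand, both work equally well.
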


We now turn to the proofs of these two results. Similarly to the proof of Theorem \ref{T:MSallp} in Section \ref{S:lim}, that of Theorem \ref{T:2} will be accomplished in a number of steps. We begin with a lemma that clarifies the connection between the Besov spaces $\mathfrak B^\sA_{s,p}$ and the domains of the fractional powers $\As$ in $L^p$ which we denote as $\mathscr L^{2s,p}$. If $0<s<1$ and $\tr\geq 0$, we know from \cite[Section 4]{GThls} and \cite[Proposition 2.13]{GTiso} that $\mathscr L^{2s,p}$ can be characterized as the closure of the functions in $\So$ with respect to the graph norm of $\As$ in $L^p$. The following lemma, which is taken from \cite[Proposition 3.3]{GTfi}, shows that, whenever $f\in \mathfrak B^\sA_{\sigma,p}$, the function $\As f\in L^p$ for any $0<s<\frac{\sigma}{2}$. We reproduce the proof here in order to keep track of the constants in dependence of $s$. 

\begin{lemma}\label{valdylemma}
Assume \eqref{trace}. For $p>1$ and $0<2s<\sigma <1$ we have 
\begin{equation}\label{bastasigmaunobis}
||\As f||_p\leq \frac{s}{\Gamma(1-s)}\left(\frac{2}{(\sigma -2s) p'}\right)^{\frac{1}{p'}}\mathscr N^\sA_{\sigma,p}(f) + \frac{2}{\Gamma(1-s)}||f||_p.
\end{equation}
In particular, \eqref{bastasigmaunobis} shows that $\mathfrak B^\sA_{\sigma,p}\hookrightarrow \mathscr L^{2s,p}$.  
When $p=1$, for any $0< 2s\leq \sigma<1$ we have 
\begin{equation}\label{bastasigmaunobispuno}
||\As f||_1\leq \frac{s}{\Gamma(1-s)}\mathscr N^\sA_{\sigma,1}(f) + \frac{2}{\Gamma(1-s)}||f||_1.
\end{equation}
In particular, this shows that $\mathfrak B^\sA_{\sigma,1}\hookrightarrow \mathscr L^{2s,1}$.
\end{lemma}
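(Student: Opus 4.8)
The plan is to split the integral defining $\As f$ in \eqref{defpower} at $t=1$, estimate the two pieces separately in $L^p$, and in each case bring the norm inside the $t$-integral by Minkowski's integral inequality. For the tail $t\in(1,\infty)$ I would use only the crude bound $\|P_t f-f\|_p\le \|P_t f\|_p+\|f\|_p\le 2\|f\|_p$, which relies on the $L^p$-contractivity of $P_t$ that follows from \eqref{stochcompl}--\eqref{aggstoch} under the hypothesis \eqref{trace}. Then
\[
\frac{s}{\Gamma(1-s)}\Big\|\int_1^\infty t^{-1-s}(P_t f-f)\,dt\Big\|_p\le \frac{s}{\Gamma(1-s)}\cdot 2\|f\|_p\int_1^\infty t^{-1-s}\,dt=\frac{2}{\Gamma(1-s)}\|f\|_p,
\]
which is exactly the second summand in both \eqref{bastasigmaunobis} and \eqref{bastasigmaunobispuno}, and this step needs no relation between $s$ and $\sigma$.

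For the main piece $t\in(0,1)$, the starting point is the pointwise Jensen inequality
\[
|P_t f(X)-f(X)|^p=\Big|\int_{\RN}p(X,Y,t)\big(f(Y)-f(X)\big)\,dY\Big|^p\le P_t\big(|f-f(X)|^p\big)(X),
\]
legitimate because $p(X,\cdot,t)$ is a probability density by \eqref{stochcompl}; integrating in $X$ gives $\|P_tf-f\|_p\le A(t)^{1/p}$, where $A(t)=\int_{\RN}P_t(|f-f(X)|^p)(X)\,dX$. For $p>1$ I would then apply H\"older's inequality in the $t$-variable on $(0,1)$ to $\int_0^1 t^{-1-s}A(t)^{1/p}\,dt$, writing the integrand as $\big(t^{-\frac{\sigma p}{2}-1}A(t)\big)^{1/p}\cdot t^{\frac{\sigma}{2}-\frac1{p'}-s}$ (note $\tfrac1p+\tfrac1{p'}=1$ makes the powers of $t$ match): the first factor, raised to the $p$-th power and integrated, is bounded by $\mathscr N^\sA_{\sigma,p}(f)^p$, the integral over $(0,\infty)$ only being larger; the second factor, raised to the $p'$-th power, equals $t^{\frac{\sigma p'}{2}-1-sp'}$, which is integrable on $(0,1)$ \emph{exactly} when $\sigma>2s$, with $\int_0^1=\frac{2}{p'(\sigma-2s)}$. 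Multiplying by $\frac{s}{\Gamma(1-s)}$ and adding the tail yields \eqref{bastasigmaunobis}. For $p=1$ there is no H\"older step: since $\|P_tf-f\|_1\le A(t)$ and $t^{-1-s}\le t^{-1-\sigma/2}$ for $t\in(0,1)$ whenever $2s\le\sigma$ (equality allowed — this is precisely why the $p=1$ statement permits $2s=\sigma$), one dominates $\int_0^1 t^{-1-s}A(t)\,dt\le \mathscr N^\sA_{\sigma,1}(f)$, giving \eqref{bastasigmaunobispuno}.

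The embedding assertions are then immediate: for $f\in\So$ the two estimates read $\|f\|_{\mathscr L^{2s,p}}:=\|f\|_p+\|\As f\|_p\le C(s,\sigma)\big(\|f\|_p+\mathscr N^\sA_{\sigma,p}(f)\big)$, and since $\So$ is dense in $\mathfrak B^\sA_{\sigma,p}$ by Proposition \ref{P:density} and, by the characterization recalled above, in $\mathscr L^{2s,p}$, a routine approximation argument extends the inequality to all $f\in\mathfrak B^\sA_{\sigma,p}$ and shows $\mathfrak B^\sA_{\sigma,p}\hookrightarrow\mathscr L^{2s,p}$. The only genuinely delicate point is the bookkeeping in the H\"older splitting for $p>1$: the two exponents must be chosen so that the ``Besov part'' reproduces $\mathscr N^\sA_{\sigma,p}(f)$ exactly \emph{and} the leftover power of $t$ stays integrable near $0$, and it is this second requirement that forces the threshold $2s<\sigma$ (respectively $2s\le\sigma$ when $p=1$); everything else is the standard justification of Fubini and Minkowski.
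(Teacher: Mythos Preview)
Your proof is correct and follows essentially the same route as the paper: the same split at $t=1$, the same crude tail bound via $L^p$-contractivity, the same Jensen/H\"older inequality $\|P_tf-f\|_p\le A(t)^{1/p}$ coming from \eqref{stochcompl}, and the identical H\"older-in-$t$ splitting on $(0,1)$ for $p>1$ (your factorisation and the paper's are the same computation written two ways), with the direct monotonicity $t^{-1-s}\le t^{-1-\sigma/2}$ replacing H\"older when $p=1$. The density remark you add for the embedding is a harmless elaboration of what the paper leaves implicit.
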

\begin{proof}
Let $p\geq 1$, $0<2s\leq\sigma <1$, and fix $f\in \mathfrak B^\sA_{\sigma,p}$. Keeping \eqref{defpower} in mind, we have
\begin{equation}\label{divbigsmall}
\left\|\As f\right\|_p \leq \frac{s}{\Gamma(1-s)} \left\|\int_0^1 \frac{1}{t^{1+s}} \left(P_t f -  f\right) dt\right\|_p + \frac{s}{\Gamma(1-s)} \left\|\int_1^\infty \frac{1}{t^{1+s}} \left(P_t f -  f\right) dt\right\|_p.
\end{equation}
On one hand, by \eqref{tuttenorme} and \eqref{trace}, we have
\begin{align}\label{big}
&\frac{s}{\Gamma(1-s)} \left\|\int_1^\infty \frac{1}{t^{1+s}} \left(P_t f -  f\right) dt\right\|_p\leq 
\frac{s}{\Gamma(1-s)}\int_1^\infty t^{-1-s}\left\|P_tf - f\right\|_p dt \\
&\leq \frac{s}{\Gamma(1-s)}\int_1^\infty t^{-1-s}\left(||P_tf||_p + ||f||_p\right) dt\leq \frac{2s}{\Gamma(1-s)} ||f||_p \int_1^\infty t^{-1-s} dt = \frac{2}{\Gamma(1-s)} ||f||_p.\nonumber
\end{align}
On the other hand, to estimate the integral on the interval $(0,1)$ in \eqref{divbigsmall} we use the following inequality
$$
\left\|P_tf - f\right\|_p\leq \left(\int_{\RN} P_t\left(|f - f(X)|^p\right)(X) dX\right)^{\frac{1}{p}},
$$
which is a consequence of \eqref{stochcompl} and H\"older's inequality. We now consider the cases $p=1$ and $p>1$ separately. When $p=1$, since $2s\leq\sigma$ we have
\begin{align*}
&\frac{s}{\Gamma(1-s)} \left\|\int_0^1 \frac{1}{t^{1+s}} \left(P_t f -  f\right) dt\right\|_1\\
&\leq \frac{s}{\Gamma(1-s)}\int_0^1 \frac{1}{t^{1+s}}\left\|P_tf - f\right\|_1 dt \leq \frac{s}{\Gamma(1-s)}\int_0^1 \frac{1}{t^{1+\frac{2s}{2}}}\int_{\RN}P_t\left(|f-f(X)|\right)(X)dX dt\\
&\leq \frac{s}{\Gamma(1-s)}\int_0^1 \frac{1}{t^{1+\frac{\sigma}{2}}}\int_{\RN}P_t\left(|f-f(X)|\right)(X)dX dt,
\end{align*}
which implies
\begin{equation}\label{mappingpowerspugu1}
\frac{s}{\Gamma(1-s)} \left\|\int_0^1 \frac{1}{t^{1+s}} \left(P_t f -  f\right) dt\right\|_1\leq \frac{s}{\Gamma(1-s)}\mathscr N^\sA_{\sigma,1}(f).
\end{equation}
Putting together \eqref{divbigsmall}, \eqref{big}, and \eqref{mappingpowerspugu1}, we obtain \eqref{bastasigmaunobispuno}. When $p>1$, we assume $\sigma>2s$ and we deduce from H\"older's inequality
\begin{align*}
&\frac{s}{\Gamma(1-s)} \left\|\int_0^1 \frac{1}{t^{1+s}} \left(P_t f -  f\right) dt\right\|_p\leq \frac{s}{\Gamma(1-s)}\int_0^1 \frac{1}{t^{1+s}}\left\|P_tf - f\right\|_p dt\\
&\leq \frac{s}{\Gamma(1-s)}\int_0^1 \frac{1}{t^{1+s-\frac{\sigma}{2}+\frac{\sigma}{2}}}\left(\int_{\RN} P_t\left(|f - f(X)|^p\right)(X) dX\right)^{\frac{1}{p}} dt\\
&\leq \frac{s}{\Gamma(1-s)}\left( \int_0^1 \frac{1}{t^{1+\left(s-\frac{\sigma}{2}\right)p'}}dt\right)^{\frac{1}{p'}}\left(\int_0^1 \frac{1}{t^{1+\frac{\sigma p}{2}}}\int_{\RN} P_t\left(|f - f(X)|^p\right)(X) dX dt\right)^{\frac{1}{p}},
\end{align*}
which implies
\begin{equation}\label{mappingpowerspugu}
\frac{s}{\Gamma(1-s)} \left\|\int_0^1 \frac{1}{t^{1+s}} \left(P_t f -  f\right) dt\right\|_p\leq \frac{s}{\Gamma(1-s)}\left(\frac{2}{(\sigma-2s) p'}\right)^{\frac{1}{p'}}\mathscr N^\sA_{\sigma,p}(f).
\end{equation}
As before, if we combine \eqref{divbigsmall}, \eqref{big}, and \eqref{mappingpowerspugu}, we conclude the proof of \eqref{bastasigmaunobis}.

\end{proof}
 
The following lemma shows that, when $f$ belongs to $\mathfrak B^\sA_{s,p}$, the small time behaviour of $P_t f$ does not influence the limiting behaviour of $(-\sA)^s$, for any $1\leq p<\infty$.

\begin{lemma}\label{L:balapg}
Let $1\leq p<\infty$ and $\tr \geq 0$. Suppose $f\in \underset{0<s<1}{\bigcup}\mathfrak B^\sA_{s,p}
$. Then,
\[
\underset{s\to 0^+}{\lim}  \frac{s}{\Gamma(1-s)} \int_0^1 \frac{1}{t^{1+s}} \left(P_t f -  f\right) dt\ =\ 0  \quad\mbox{ in }L^p.
\]
\end{lemma}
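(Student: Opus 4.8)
The plan is to read the statement off directly from Lemma \ref{valdylemma}, whose proof already contains the quantitative estimates we need. Since $f\in \underset{0<s<1}{\bigcup}\mathfrak B^\sA_{s,p}$, fix $\sigma\in(0,1)$ with $f\in \mathfrak B^\sA_{\sigma,p}$, and from now on restrict the parameter to the range $0<2s<\sigma$. In this range the $L^p$-valued integral $\int_0^1 t^{-1-s}(P_t f - f)\,dt$ is absolutely convergent: by \eqref{stochcompl} and H\"older's inequality one has $\|P_tf-f\|_p\le \big(\int_{\RN} P_t(|f-f(X)|^p)(X)\,dX\big)^{1/p}$, and, exactly as in the proof of Lemma \ref{valdylemma}, splitting the weight as $t^{-1-s}=t^{-1-s+\frac{\sigma}{2}}\,t^{-1-\frac{\sigma}{2}}$ (for $p>1$) shows that $\int_0^1 t^{-1-s}\|P_tf-f\|_p\,dt<\infty$ because $f\in \mathfrak B^\sA_{\sigma,p}$; for $p=1$ one simply uses $2s\le \sigma$.

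Next I would invoke the two estimates already derived inside the proof of Lemma \ref{valdylemma}. For $p>1$, inequality \eqref{mappingpowerspugu} gives
\[
\frac{s}{\Gamma(1-s)} \left\|\int_0^1 \frac{1}{t^{1+s}} \left(P_t f -  f\right) dt\right\|_p\leq \frac{s}{\Gamma(1-s)}\left(\frac{2}{(\sigma-2s) p'}\right)^{\frac{1}{p'}}\mathscr N^\sA_{\sigma,p}(f),
\]
while for $p=1$ inequality \eqref{mappingpowerspugu1} gives
\[
\frac{s}{\Gamma(1-s)} \left\|\int_0^1 \frac{1}{t^{1+s}} \left(P_t f -  f\right) dt\right\|_1\leq \frac{s}{\Gamma(1-s)}\mathscr N^\sA_{\sigma,1}(f).
\]

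Finally I would let $s\to 0^+$ in each display. Since $\Gamma$ is continuous at $1$ with $\Gamma(1)=1$, we have $\frac{s}{\Gamma(1-s)}\to 0$; moreover $\mathscr N^\sA_{\sigma,p}(f)$ is a fixed finite quantity and, when $p>1$, the factor $\big(\frac{2}{(\sigma-2s)p'}\big)^{1/p'}$ stays bounded (it tends to $\big(\frac{2}{\sigma p'}\big)^{1/p'}$) because $2s<\sigma$. Hence both right-hand sides converge to $0$, which is precisely the assertion of the lemma. There is essentially no genuine obstacle here: all the work sits in Lemma \ref{valdylemma} — itself resting on the ultracontractivity Proposition \ref{P:ptq} and on the $L^p\to L^p$ contractivity that follows from \eqref{trace} — so the only thing worth emphasising is the elementary fact that the normalising constant $s/\Gamma(1-s)$ vanishes as $s\to 0^+$, and this is exactly what absorbs the (otherwise harmless) growth of the remaining factors as $s$ decreases.
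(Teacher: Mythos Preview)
Your argument is correct and coincides with the paper's own proof: fix $\sigma$ with $f\in\mathfrak B^\sA_{\sigma,p}$, restrict to $0<2s<\sigma$, and invoke the estimates \eqref{mappingpowerspugu1} (for $p=1$) and \eqref{mappingpowerspugu} (for $p>1$) from Lemma \ref{valdylemma}, then let $s\to 0^+$. The only superfluous remark is that these particular estimates do not actually rely on Proposition \ref{P:ptq}; that proposition is used only for the large-time piece \eqref{big}.
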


\begin{proof}
Let $\sigma \in (0,1)$ be such that $f\in \mathfrak B^\sA_{\sigma,p}$, and consider $0<s<\frac{\sigma}{2}$. If  $p=1$, then the conclusion follows by letting $s\to 0^+$ in  \eqref{mappingpowerspugu1}. If instead $p>1$, we use \eqref{mappingpowerspugu}.

\end{proof}

The next two lemmas constitute the core of the proof of Theorem \ref{T:2}.

\begin{lemma}\label{P:balapg}
Let $1\leq p<\infty$ and  assume that $f\in \underset{0<s<1}{\bigcup}\mathfrak B^\sA_{s,p}$. If $\tr > 0$, then
\[
\underset{s\to 0^+}{\lim}\ (-\sA)^s f = f \quad\mbox{ in }L^p.
\]
\end{lemma}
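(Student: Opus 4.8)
The plan is to subtract the limit by means of the elementary identity \eqref{identity} and then split the defining integral \eqref{defpower} at $t=1$: on $(0,1)$ the small-time behaviour is already controlled by Lemma~\ref{L:balapg}, while on $(1,\infty)$ the hypothesis $\tr>0$ provides an exponential $L^p$-decay of $P_t$ that kills the tail. First I would fix $\sigma\in(0,1)$ with $f\in\mathfrak B^\sA_{\sigma,p}$ and work with $0<2s<\sigma$ (or $0<2s\le\sigma$ when $p=1$); by Lemma~\ref{valdylemma} one then has $\As f\in L^p$, and, exactly as in \eqref{divbigsmall}, the integral in \eqref{defpower} decomposes as a sum of two $L^p$-valued integrals over $(0,1)$ and $(1,\infty)$, the latter being absolutely convergent because $\int_1^\infty t^{-1-s}\|P_tf-f\|_p\,dt\le\frac{2}{s}\|f\|_p$.

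Using \eqref{identity} I would then write
\[
\As f-f=-\frac{s}{\Gamma(1-s)}\int_0^1\frac{(P_tf-f)+(1-e^{-t})f}{t^{1+s}}\,dt-\frac{s}{\Gamma(1-s)}\int_1^\infty\frac{P_tf-e^{-t}f}{t^{1+s}}\,dt,
\]
and estimate the two contributions separately. The first one is the routine piece: $\frac{s}{\Gamma(1-s)}\int_0^1 t^{-1-s}(P_tf-f)\,dt\to0$ in $L^p$ directly by Lemma~\ref{L:balapg}, and the remaining $(1-e^{-t})f$ part has $L^p$-norm at most $\frac{s}{(1-s)\Gamma(1-s)}\|f\|_p$ (using $1-e^{-t}\le t$ on $[0,1]$), hence tends to $0$ as $s\to0^+$.

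The substantive step is the tail over $(1,\infty)$, and this is exactly where $\tr>0$ is used. I would invoke Proposition~\ref{P:ptq} with $q=p$ --- equivalently, interpolate $\|P_tf\|_\infty\le\|f\|_\infty$ (from \eqref{stochcompl}) with $\|P_tf\|_1\le e^{-t\tr}\|f\|_1$ (from \eqref{aggstoch}) --- to obtain $\|P_tf\|_p\le C e^{-t\tr/p}\|f\|_p$ for all $t>0$. Then, bounding $t^{-1-s}\le1$ on $[1,\infty)$,
\[
\left\|\frac{s}{\Gamma(1-s)}\int_1^\infty\frac{P_tf-e^{-t}f}{t^{1+s}}\,dt\right\|_p\le\frac{s}{\Gamma(1-s)}\|f\|_p\int_1^\infty\big(C e^{-t\tr/p}+e^{-t}\big)\,dt\ \underset{s\to0^+}{\longrightarrow}\ 0,
\]
since the $t$-integral is a finite constant (here one uses $\tr>0$) and $\Gamma(1-s)\to\Gamma(1)=1$. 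Putting the two estimates together yields $\|\As f-f\|_p\to0$, which is the claim.

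I do not foresee a genuine obstacle: the only point requiring care is ensuring that the exponential $L^p$-decay of the semigroup is actually available, which is precisely what $\tr>0$ buys and precisely what fails when $\tr=0$ --- the latter being why the borderline case $p=1$, $\tr=0$ must be handled by the separate arguments in the rest of the section.
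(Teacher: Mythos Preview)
Your proposal is correct and follows essentially the same approach as the paper: both subtract $f$ via the identity \eqref{identity}, handle the $(0,1)$ contribution by Lemma~\ref{L:balapg} together with the trivial bound $1-e^{-t}\le t$, and dispatch the $(1,\infty)$ tail using the exponential $L^p$-decay $\|P_tf\|_p\le C e^{-t\tr/p}\|f\|_p$ from Proposition~\ref{P:ptq}. The only cosmetic difference is that the paper breaks the expression into four separate terms rather than two, but the substance is identical.
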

\begin{proof}
As in the proof of Proposition \ref{P:balapoint} we use \eqref{identity} to write
\begin{align*}
& 
(-\sA)^s f - f = - \frac{s}{\Gamma(1-s)} \int_0^\infty \frac{1}{t^{1+s}} \left((P_t f -  f) + (1-e^{-t}) f\right) dt
\\
& = - \frac{s}{\Gamma(1-s)} \int_0^1 \frac{1}{t^{1+s}} \left(P_t f -  f\right) dt - \frac{s}{\Gamma(1-s)}\left(\int_0^1 \frac{1-e^{-t}}{t^{1+s}}dt\right) f
\\
& + \frac{s}{\Gamma(1-s)} \left(\int_1^\infty \frac{e^{-t}}{t^{1+s}}dt\right)f- \frac{s}{\Gamma(1-s)} \int_1^\infty \frac{1}{t^{1+s}} P_t f dt.
\end{align*}
The first term goes to $0$ in $L^p$ thanks to Lemma \ref{L:balapg}. Moreover, it is very easy to see that also the second and the third term converge to $0$ in $L^p$ since $f\in L^p$ and the two integrals $\int_0^1 \frac{1-e^{-t}}{t^{1+s}}dt$ and $\int_1^\infty \frac{e^{-t}}{t^{1+s}}dt$ are bounded above uniformly with respect to $s$ (exactly as in the proof of Proposition \ref{P:balapoint}). The proof is completed if we show that
\begin{equation}\label{grtzinf}
\frac{s}{\Gamma(1-s)} \int_1^{\infty} \frac{1}{t^{1+s}} P_t f dt\underset{s\to 0^+}{\longrightarrow}\ 0  \quad\mbox{ in }L^p, \quad\mbox{ for all }f\in L^p.
\end{equation}
To prove \eqref{grtzinf} we observe that Minkowski's inequality and Proposition \ref{P:ptq} imply
\begin{align*}
& \left\|\frac{s}{\Gamma(1-s)} \int_1^{\infty} \frac{1}{t^{1+s}} P_t f dt \right\|_p \leq  \frac{s}{\Gamma(1-s)} \int_1^\infty \frac{1}{t^{1+s}} ||P_t f||_p dt\\
&\leq \frac{s}{\Gamma(1-s)} C(p)||f||_p \int_1^\infty \frac{e^{-t\frac{\tr}{p}}}{t^{1+s}}dt\leq \frac{s}{\Gamma(1-s)} C(p)||f||_p \int_1^\infty e^{-t\frac{\tr}{p}}dt.
\end{align*}
Since $\tr>0$, the last term vanishes as $s\to 0^+$. This establishes \eqref{grtzinf} concluding the proof.

\end{proof}

\begin{lemma}\label{P:balap0}
Let $1< p<\infty$ and  suppose $f\in \underset{0<s<1}{\bigcup}\mathfrak B^\sA_{s,p}$. If $\tr = 0$, then
\begin{equation}\label{smoothprove}
\underset{s\to 0^+}{\lim}\ (-\sA)^s f = f \quad\mbox{ in }L^p.
\end{equation}
\end{lemma}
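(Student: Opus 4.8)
The plan is to mimic the structure of the proof of Lemma \ref{P:balapg}, but to compensate for the lack of exponential decay of $P_t f$ in $L^p$ (which now fails, since $\tr = 0$) by invoking the ultracontractivity estimate \eqref{tuttenorme} in the $L^1 \to L^p$ form together with the growth $V(t) \gtrsim \sqrt t$ from \eqref{sqrtbound} of Proposition \ref{P:boom}, exactly as was done in the proof of Lemma \ref{L:quattro}. First I would use the identity \eqref{identity} to write, for $0 < s < \sigma/2$ with $f \in \mathfrak B^\sA_{\sigma,p}$,
\[
(-\sA)^s f - f = - \frac{s}{\Gamma(1-s)} \int_0^1 \frac{1}{t^{1+s}} \left(P_t f - f\right) dt - \frac{s}{\Gamma(1-s)}\left(\int_0^1 \frac{1-e^{-t}}{t^{1+s}}dt\right) f + \frac{s}{\Gamma(1-s)} \left(\int_1^\infty \frac{e^{-t}}{t^{1+s}}dt\right)f - \frac{s}{\Gamma(1-s)} \int_1^\infty \frac{1}{t^{1+s}} P_t f\, dt.
\]
The first term tends to $0$ in $L^p$ by Lemma \ref{L:balapg}, and the second and third terms tend to $0$ in $L^p$ since $\int_0^1 \frac{1-e^{-t}}{t^{1+s}}dt$ and $\int_1^\infty \frac{e^{-t}}{t^{1+s}}dt$ are bounded uniformly in $s \in (0,1)$ while $\frac{s}{\Gamma(1-s)} \to 0$, precisely as in the proof of Lemma \ref{P:balapg}. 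So everything reduces to showing
\begin{equation}\label{grtzinf0}
\frac{s}{\Gamma(1-s)} \int_1^{\infty} \frac{1}{t^{1+s}} P_t f\, dt\ \underset{s\to 0^+}{\longrightarrow}\ 0 \quad\mbox{in } L^p.
\end{equation}

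To prove \eqref{grtzinf0} I would first establish it for $f \in \So$ (or just $f \in L^1 \cap L^p$) by a direct estimate, and then pass to general $f \in \mathfrak B^\sA_{\sigma,p}$ by a density argument. For the smooth case, Minkowski's integral inequality and \eqref{tuttenorme} (applied with $q = 1$) give
\[
\left\|\frac{s}{\Gamma(1-s)} \int_1^{\infty} \frac{1}{t^{1+s}} P_t f\, dt \right\|_p \leq \frac{s}{\Gamma(1-s)} \int_1^\infty \frac{1}{t^{1+s}}\, \frac{C}{V(t)^{1-\frac1p}}\, \|f\|_1\, dt \leq \frac{s}{\Gamma(1-s)}\, \frac{C}{c_0^{1-\frac1p}}\, \|f\|_1 \int_1^\infty \frac{dt}{t^{1+s+\frac{1}{2}(1-\frac1p)}},
\]
using \eqref{sqrtbound}. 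The last integral is bounded uniformly in $s > 0$ (indeed it is $\leq \frac{2p}{p-1}$, since $1 - \frac1p > 0$ as $p > 1$), so the whole expression is $O\!\big(\tfrac{s}{\Gamma(1-s)}\big) \to 0$. This is where the hypothesis $p > 1$ is genuinely used: the exponent $\frac12(1-\frac1p)$ is strictly positive only for $p > 1$, so the tail integral stays bounded; for $p = 1$ this term does not vanish, which is the mechanism behind the failure of \eqref{yoyo} in $L^1$ recorded in Theorem \ref{T:2}.

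Finally, to remove the smoothness assumption in \eqref{grtzinf0}, I would use Proposition \ref{P:density}: given $f \in \mathfrak B^\sA_{\sigma,p}$ and $\eta > 0$, pick $g \in \So$ with $\|f - g\|_p < \eta$ (density of $\So$ in $L^p$ suffices here, actually). Since $P_t$ is a contraction on $L^p$ when $\tr = 0$ (by \eqref{stochcompl}, \eqref{aggstoch}), Minkowski gives $\big\|\frac{s}{\Gamma(1-s)} \int_1^\infty t^{-1-s} P_t(f-g)\, dt\big\|_p \leq \frac{s}{\Gamma(1-s)} \|f - g\|_p \int_1^\infty t^{-1-s}\, dt = \frac{1}{\Gamma(1-s)}\|f-g\|_p$, which is $\leq C\eta$ for $s$ small; and the corresponding integral with $g$ in place of $f$ tends to $0$ by the smooth case just proved. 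Combining the two pieces and letting $\eta \to 0$ yields \eqref{grtzinf0} for general $f$, and hence \eqref{smoothprove}. The only mildly delicate point is the bookkeeping in this last density step — making sure the contraction bound on $P_t$ is used on the right space and that the constants are uniform in $s$ near $0$ — but there is no real obstacle, as the needed uniform bounds are all available from the results quoted above.
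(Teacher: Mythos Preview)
Your proof is correct and follows the same overall architecture as the paper: the same decomposition via \eqref{identity}, the same reduction to the tail term \eqref{grtzinf0}, and the same treatment of the smooth case via the $L^1\to L^p$ ultracontractivity combined with the volume bound \eqref{sqrtbound}.

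The one genuine difference is in the density step. The paper approximates $f$ by $f_n\in\So$ in the full Besov norm (invoking Proposition \ref{P:density}) and then controls $\|(-\sA)^s(f-f_n)\|_p$ via the estimate \eqref{bastasigmaunobis} of Lemma \ref{valdylemma}, which requires both $\|f-f_n\|_p$ and $\mathscr N^\sA_{\sigma,p}(f-f_n)$ to be small. You instead approximate only in $L^p$ and bound just the tail piece $\frac{s}{\Gamma(1-s)}\int_1^\infty t^{-1-s}P_t(f-g)\,dt$ using nothing more than the $L^p$-contractivity of $P_t$, obtaining the uniform bound $\frac{1}{\Gamma(1-s)}\|f-g\|_p$. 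This is strictly lighter: it avoids both Proposition \ref{P:density} and Lemma \ref{valdylemma} at this step, and shows that the Besov hypothesis on $f$ is only needed to make the small-time term vanish via Lemma \ref{L:balapg}. The paper's route has the advantage of packaging the estimate on the full operator $(-\sA)^s$ in one stroke, but your argument is more economical for the specific conclusion \eqref{grtzinf0}.
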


\begin{proof}
Let $\sigma\in (0,1)$ be such that $f\in \mathfrak B^\sA_{\sigma,p}$. We proceed as in the proof of Lemma \ref{P:balapg}, using \eqref{identity} and Lemma \ref{L:balapg}. The proof is completed once we establish the analogue of \eqref{grtzinf}. The main difference with Lemma \ref{P:balapg} is that, since we now have $\tr =0$, the decay coming from the term $e^{- t \frac{\operatorname{tr} B}{p}}$ in \eqref{tuttenorme} is now lost. To circumvent this difficulty, we first show that the desired conclusion \eqref{smoothprove} does hold when $f\in\So$, and then use a density argument to extend it to $f\in \mathfrak B^\sA_{\sigma,p}$.
In dealing with $f\in\So$, the advantage is that we can exploit the rate of decay given by the $L^1\to L^p$ ultracontractivity of $P_t$, and by the blowup of $V(t)$ for large $t$. Here, the reader should notice the similarities with the arguments in the proofs of Lemmas \ref{L:tre} and \ref{L:quattro}. 

Let then $f\in \So$. In view of Lemma \ref{L:balapg}, to prove \eqref{smoothprove} for such $f$ it suffices to show that
\begin{equation}\label{eqzinf}
\underset{s\to 0^+}{\lim} \frac{s}{\Gamma(1-s)} \int_1^{\infty} \frac{1}{t^{1+s}} P_t f dt\ =\ 0  \quad\mbox{ in }L^p. 
\end{equation}
Now, Proposition \ref{P:ptq} and \eqref{sqrtbound} imply for $1\le t<\infty$,
\[
||P_t f||_{p} \leq \frac{C(p)}{V(t)^{1-\frac{1}{p}}}||f||_{1} \leq C'(p) \frac{||f||_1}{t^{\frac{1}{2p'}}}.
\]
This gives
\begin{align*}
& \left\|\frac{s}{\Gamma(1-s)} \int_1^{\infty} \frac{1}{t^{1+s}} P_t f dt \right\|_p \leq  \frac{s}{\Gamma(1-s)} \int_1^\infty \frac{1}{t^{1+s}} ||P_t f||_p dt\\
&\leq \frac{s}{\Gamma(1-s)} C'(p) ||f||_1 \int_1^\infty \frac{1}{t^{1+s +\frac{1}{2p'}}}dt = \frac{s}{\Gamma(1-s)} C'(p) ||f||_1 \frac{1}{s +\frac{1}{2p'}}\ \underset{s\to 0^+}{\longrightarrow}\ 0.
\end{align*}
This proves \eqref{eqzinf}, and therefore \eqref{smoothprove}, when $f\in \So$. Returning to $f\in \mathfrak B^\sA_{\sigma,p}$,  by Proposition \ref{P:density} there exists a sequence $\{f_n\}\in \So$ such that $f_n\to f$ in $\mathfrak B^\sA_{\sigma,p}$, i.e., \eqref{den} holds. For any $0<s<\frac{\sigma}{2}$ and $n\in\mathbb{N}$, we now use \eqref{bastasigmaunobis} to estimate
\begin{align*}
&\left\|(-\sA)^s f - f\right\|_p\leq \left\|(-\sA)^s \left( f - f_n\right)\right\|_p + \left\|(-\sA)^s f_n - f_n\right\|_p + \left\|f_n - f\right\|_p\\
&\leq \frac{s}{\Gamma(1-s)}\left(\frac{2}{(\sigma -2s) p'}\right)^{\frac{1}{p'}}\mathscr N^\sA_{\sigma,p}(f - f_n) + \left(\frac{2}{\Gamma(1-s)} + 1\right)||f_n-f||_p + \left\|(-\sA)^s f_n - f_n\right\|_p.
\end{align*}
Given $\ve>0$, the sum of the first two terms in the right-hand side of the latter inequality can be made smaller than $\frac{\ve}{2}$ provided that $n$ is large enough, and this can be done uniformly in $s\in (0,\frac{\sigma}{4}]$. Having fixed such $n$, in view of the validity of \eqref{smoothprove} for functions in $\So$, we can make the remaining term  $\left\|(-\sA)^s f_n - f_n\right\|_p\leq \frac{\ve}{2}$ by choosing $s$ small enough. This completes the proof. 

\end{proof}

When $p=1$ Lemma \ref{P:balap0} fails to be true. We have in fact the following.

\begin{lemma}\label{P:balap1notdiff}
Let $\tr = 0$, and suppose that $f\in \underset{0<s<1}{\bigcup}\mathfrak B^\sA_{s,1}$ with $f\ge 0$. Then,
\[
\underset{s\to 0^+}{\lim}\ ||(-\sA)^s f - f||_1 = ||f||_1.
\]
\end{lemma}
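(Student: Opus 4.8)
The plan is to follow the decomposition scheme of Lemmas \ref{P:balapg} and \ref{P:balap0}, but this time to track the piece of $(-\sA)^s f - f$ that carries the full $L^1$ mass in the limit, rather than discarding it. Fix $\sigma\in(0,1)$ with $f\in\mathfrak B^\sA_{\sigma,1}$. Using the identity \eqref{identity} to write $f=\frac{s}{\Gamma(1-s)}\int_0^\infty (1-e^{-t})t^{-1-s}f\,dt$ and splitting the resulting integral at $t=1$, I would record
\begin{align*}
(-\sA)^s f - f &= -\frac{s}{\Gamma(1-s)}\int_0^1 \frac{(P_t f - f) + (1-e^{-t})f}{t^{1+s}}\, dt \\
&\quad + \frac{s}{\Gamma(1-s)}\left(\int_1^\infty \frac{e^{-t}}{t^{1+s}}\,dt\right) f - \frac{s}{\Gamma(1-s)}\int_1^\infty \frac{P_t f}{t^{1+s}}\, dt.
\end{align*}
By Lemma \ref{L:balapg} the term $\frac{s}{\Gamma(1-s)}\int_0^1 t^{-1-s}(P_t f - f)\,dt\to 0$ in $L^1$, and since both $\int_0^1 (1-e^{-t})t^{-1-s}\,dt$ and $\int_1^\infty e^{-t}t^{-1-s}\,dt$ stay bounded uniformly for small $s$ while $\frac{s}{\Gamma(1-s)}\to 0$, the first three terms on the right all tend to $0$ in $L^1$. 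Consequently
\[
\left| \|(-\sA)^s f - f\|_1 - \left\| \frac{s}{\Gamma(1-s)}\int_1^\infty \frac{P_t f}{t^{1+s}}\, dt \right\|_1 \right| \underset{s\to 0^+}{\longrightarrow} 0,
\]
so it only remains to evaluate the limit of the last $L^1$-norm.

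This is precisely where the hypotheses $f\ge 0$ and $\tr=0$ are used. Since $p(X,Y,t)\ge 0$ we have $P_t f\ge 0$ a.e., so the function $\frac{s}{\Gamma(1-s)}\int_1^\infty t^{-1-s}P_t f\,dt$ is nonnegative, and by Tonelli's theorem
\[
\left\| \frac{s}{\Gamma(1-s)}\int_1^\infty \frac{P_t f}{t^{1+s}}\, dt \right\|_1 = \frac{s}{\Gamma(1-s)}\int_1^\infty \frac{1}{t^{1+s}}\left(\int_{\RN} P_t f(X)\, dX\right) dt.
\]
By \eqref{aggstoch} the inner integral equals $\int_{\RN} f(Y)e^{-t\tr}\,dY = e^{-t\tr}\|f\|_1$, which is just $\|f\|_1$ because $\tr=0$. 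Hence the right-hand side is $\frac{s}{\Gamma(1-s)}\|f\|_1\int_1^\infty t^{-1-s}\,dt = \frac{\|f\|_1}{\Gamma(1-s)}$, which converges to $\|f\|_1$ as $s\to 0^+$. Combined with the previous paragraph, this yields $\|(-\sA)^s f - f\|_1\to\|f\|_1$.

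I do not expect a genuine obstacle here; the argument is essentially a computation, and the only care needed is bookkeeping. One should verify that $\frac{s}{\Gamma(1-s)}\int_1^\infty t^{-1-s}P_t f\,dt$ defines an $L^1$ function — which, by nonnegativity of the integrand and the identity $\int_{\RN}P_t f(X)\,dX=\|f\|_1$, follows from Tonelli, and which also justifies computing its $L^1$-norm by interchanging the order of integration (the same bound $\int_1^\infty t^{-1-s}\|P_t f\|_1\,dt\le\|f\|_1/s<\infty$ lets one treat it as a Bochner integral if one prefers the style of the earlier lemmas). The conceptual content of the statement, in contrast with Lemma \ref{P:balap0}, is exactly that for $p=1$ and $\tr=0$ the tail $\frac{s}{\Gamma(1-s)}\int_1^\infty t^{-1-s}P_t f\,dt$ does \emph{not} vanish in $L^1$: its mass is conserved and equals $\|f\|_1$, and this is what forces the limit to be $\|f\|_1$ rather than $0$.
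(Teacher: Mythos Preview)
Your proof is correct and follows essentially the same approach as the paper's: reduce via Lemma \ref{L:balapg} and the splitting at $t=1$ to the tail term $\frac{s}{\Gamma(1-s)}\int_1^\infty t^{-1-s}P_t f\,dt$, then compute its $L^1$-norm exactly using $f\ge 0$, \eqref{aggstoch}, and $\tr=0$ to obtain $\|f\|_1/\Gamma(1-s)$.
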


\begin{proof}
Suppose $\sigma\in (0,1)$ is such that $f\in \mathfrak B^\sA_{\sigma,1}$, and that moreover $f\ge 0$. We repeat the initial arguments in the proof of Lemmas \ref{P:balapg} and \ref{P:balap0}. After using Lemma \ref{L:balapg}, we are left with understanding what happens to the term
$$\frac{s}{\Gamma(1-s)} \int_1^{\infty} \frac{1}{t^{1+s}} P_t f dt$$
in the limit as $s\to 0^+$ in the $L^1$-topology. Differently from the previous situations, by \eqref{aggstoch} and the hypothesis $\tr=0$ and $f\geq 0$, we have
\begin{align}\label{normuno}
\left\|\frac{s}{\Gamma(1-s)} \int_1^{\infty} \frac{P_t f}{t^{1+s}} dt\right\|_1=\frac{s}{\Gamma(1-s)} \int_1^{\infty}\int_{\RN}\int_{\RN}\frac{f(Y)}{t^{1+s}}p(X,Y,t) dX dY dt=\frac{||f||_1}{\Gamma(1-s)}.
\end{align}
This implies
\[
\underset{s\to 0^+}{\lim}\ ||(-\sA)^s f - f||_1 = \underset{s\to 0^+}{\lim}\ \left\|\frac{s}{\Gamma(1-s)} \int_1^{\infty} \frac{1}{t^{1+s}} P_t f dt\right\|_1 = \underset{s\to 0^+}{\lim}\ \frac{||f||_1}{\Gamma(1-s)}= ||f||_1.
\]

\end{proof}

We explicitly note the following direct consequence of (i) in Proposition \ref{P:balapoint} and of Lemma \ref{P:balap1notdiff}.

\begin{corollary}\label{C:lastbeach}
Let $\tr =0$. For every nontrivial $f\in\So$, with $f\geq 0$, the
$
\underset{s\to 0^+}{\lim}\ (-\sA)^s f$ in $L^1
$
does not exist.
\end{corollary}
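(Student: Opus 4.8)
The plan is to derive Corollary \ref{C:lastbeach} by a clean \emph{reductio ad absurdum}, playing the pointwise limit from Proposition \ref{P:balapoint}(i) against the $L^1$-norm limit from Lemma \ref{P:balap1notdiff}. First I would fix a nontrivial $f\in\So$ with $f\geq 0$, and observe that such an $f$ automatically lies in $\bigcup_{0<s<1}\mathfrak B^\sA_{s,1}$, since $\So\subset \mathfrak B^\sA_{s,1}$ for every $0<s<1$ by \cite[Lemma 7.3]{GTiso}; hence both cited results apply to it. Suppose, for contradiction, that $g\overset{def}{=}\lim_{s\to 0^+}(-\sA)^s f$ exists in $L^1$.

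Next, I would extract the pointwise information. From $L^1$-convergence $(-\sA)^s f\to g$ one can pass to a sequence $s_k\to 0^+$ along which the convergence holds almost everywhere. On the other hand, since $\max\{\Re(\lambda)\mid \lambda\in\sigma(B)\}\geq 0$ holds automatically when $\tr=0$ (the trace is the sum of the real parts of the eigenvalues, so if they were all negative the trace would be negative), Proposition \ref{P:balapoint}(i) gives $(-\sA)^s f(X)\to f(X)$ for every $X\in\RN$. Matching the two limits along $s_k$ forces $g=f$ a.e., i.e. the $L^1$-limit, if it exists, must be $f$ itself.

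Finally, I would invoke Lemma \ref{P:balap1notdiff}, which under the standing hypotheses $\tr=0$ and $f\geq 0$ asserts $\lim_{s\to 0^+}\|(-\sA)^s f - f\|_1 = \|f\|_1$. If $(-\sA)^s f\to f$ in $L^1$, the left-hand side would be $0$; hence $\|f\|_1=0$, contradicting the nontriviality of $f$. Therefore the limit $\lim_{s\to 0^+}(-\sA)^s f$ does not exist in $L^1$, which is the assertion of the corollary. I do not anticipate any serious obstacle: the only mild technical point is the passage from $L^1$-convergence to a.e.-convergence along a subsequence, which is the standard Riesz–Fischer fact, and the verification that $\tr=0$ places us in case (i) of Proposition \ref{P:balapoint}. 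Everything else is a two-line contradiction argument assembling results already proved in the excerpt.
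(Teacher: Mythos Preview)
Your proposal is correct and follows exactly the route the paper indicates: the corollary is stated as a direct consequence of Proposition \ref{P:balapoint}(i) and Lemma \ref{P:balap1notdiff}, and you have simply spelled out the contradiction argument (including the observation that $\tr=0$ forces $\max\{\Re(\lambda)\mid\lambda\in\sigma(B)\}\geq 0$ and the standard passage to an a.e.-convergent subsequence) that the paper leaves implicit.
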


We are now ready to provide the
\begin{proof}[Proof of Theorem \ref{T:2}]
Suppose that $1<p<\infty$ and that \eqref{trace} hold. If $f\in \underset{0<s<1}{\bigcup} \Bps$, then the desired conclusion \eqref{yoyo} follows directly from Lemmas \ref{P:balapg} and \ref{P:balap0}. The same conclusion continues to be true when $p=1$ and $\tr >0$ again by Lemma \ref{P:balapg}. When instead $p=1$ and $\tr = 0$, we can appeal to Corollary \ref{C:lastbeach} to complete the proof.
\end{proof}

We remark that the fact that the fractional powers of a suitable operator approximate the identity in the limit as $s\to 0^+$ is not new in the literature. To the best of our knowledge, in an abstract setting this traces back to Balakrishnan's 1960 seminal paper \cite{Bala}. Using his representation of the fractional powers $A^s$ in terms of the resolvent, in his Lemma 2.4 Balakrishnan proved that, given a closed linear operator $A$ on a Banach space $X$ with domain $D(A)$ and with a resolvent $R(\la, A)$ satisfying $||\la R(\la, A)|| \le M$ for all $\lambda>0$, then the fractional powers $A^s$ are well-defined and the following is true:
\begin{equation}\label{Balacond}
\la R(\la, A)x\to 0 \mbox{ as }\lambda\to 0^+ \mbox{ for some }x\in D(A)\quad\Longrightarrow\quad A^s x\to x\mbox{ as }s\to 0^+,
\end{equation}
where the convergence is in the norm topology of $X$. We emphasise that the hypothesis in \cite{Bala} do not necessarily imply that $A$ be the infinitesimal generator of a semigroup.

Theorem \ref{T:2} above unravels the abstract result \eqref{Balacond} in the setting of the H\"ormander operators \eqref{K0} and their semigroups \eqref{hsg}. On the one hand, it clarifies the crucial role played by the trace of the drift in the concrete context of the Besov spaces $\Bps$. On the other hand, it shows why $p=1$ occupies a special place in the analysis of the limiting behaviour of $\As$. 
Since these aspects are perhaps better known to the semigroup community than to workers in pde's, in what follows we elucidate the abstract condition in \eqref{Balacond} in the context of the operators $\sA$ in \eqref{K0} (under the hypothesis \eqref{trace}). 
Consider the representation of the resolvent in terms of the semigroup $R(\la, \sA)=\int_0^\infty e^{-\la t}P_t dt$, see for this \cite[Lemma 2.10]{GT}, where also the above mentioned assumption in \cite{Bala}, $||\la R(\la, A)|| \le M$ for all $\lambda>0$, was verified. Recalling that $\So$ is a core for the realization of $\sA$ in $L^p$, we fix $f\in\So$. If $\tr>0$, then Proposition \ref{P:ptq} gives for any $p\geq 1$
\begin{align*}
\left\|\la \int_0^\infty e^{-\la t}P_t f dt\right\|_p&\leq C(p) ||f||_p\ \la  \int_0^\infty e^{-\la t}e^{-t\frac{\tr}{p}}dt = C(p) ||f||_p \frac{\la p}{\la p +\tr }\  \underset{\la\to 0^+}{\longrightarrow }\ 0. 
\end{align*}
If instead $\tr=0$, then from Propositions \ref{P:ptq} and  \ref{P:boom} we obtain for any $p>1$, 
\begin{align*}
&\left\|\la \int_0^\infty e^{-\la t}P_t f dt\right\|_p\leq \la \int_0^1 e^{-\la t}||P_t f||_p dt +\la \int_1^\infty e^{-\la t}||P_t f||_p dt\\
&\leq \la ||f||_p \int_0^1 e^{-\la t} dt + \la C(p) ||f||_1 \int_1^\infty \frac{e^{-\la t}}{V(t)^{\frac{1}{p'}}}dt\leq (1-e^{-\la})||f||_p + \la \frac{C(p)}{c_0} ||f||_1 \int_1^\infty t^{-\frac{1}{2p'}}e^{-\la t}dt\\
&\leq (1-e^{-\la})||f||_p + \la^{\frac{1}{2p'}} \frac{C(p)}{c_0} ||f||_1 \G( 1- (2p')^{-1}) \underset{\la\to 0^+}{ \longrightarrow} 0. 
\end{align*}
This shows the validity for functions $f\in\So$ of the sufficient condition in \eqref{Balacond} for any $p\geq 1$ when $\tr>0$,  and for any $p>1$ when $\tr=0$. On the other hand, we cannot expect the sufficient condition in \eqref{Balacond} to hold in the case $p=1$ and $\tr=0$. If in fact $f\geq 0$, from \eqref{aggstoch} we have
$$\left\|\la \int_0^\infty e^{-\la t}P_t f dt\right\|_1 = \la ||f||_1 \int_0^\infty e^{-\la t} dt= ||f||_1 \mbox{ for every }\la>0.$$

In closing, we present the

\begin{proof}[Proof of Proposition \ref{P:balap1not}]
Let $f\in \underset{0<s<1}{\bigcup}\mathfrak B^\sA_{s,1}
$, $f\geq 0$. By Lemma \ref{L:balapg} and the definition of $(-\sA)^s f$ in \eqref{defpower} we see that, in order to prove the proposition, it suffices to show that 
\begin{equation}\label{claimbeauty}
\underset{s\to 0^+}{\lim} \left\|\frac{-s}{\Gamma(1-s)} \int_1^\infty \frac{1}{t^{1+s}} \left(P_t f -  f\right) dt\right\|_1\ =\ 2||f||_1.
\end{equation}
We observe that by \eqref{aggstoch} we find
\begin{align*}
&\left\|\frac{-s}{\Gamma(1-s)} \int_1^\infty \frac{1}{t^{1+s}} \left(P_t f -  f\right) dt\right\|_1\leq\frac{s}{\Gamma(1-s)} \int_1^\infty \frac{1}{t^{1+s}} \left(||P_t f||_1 +  ||f||_1\right) dt\\
&\leq\frac{2s}{\Gamma(1-s)}||f||_1 \int_1^\infty \frac{1}{t^{1+s}}dt=\frac{2||f||_1}{\Gamma(1-s)}.
\end{align*}
This implies
$$
\underset{s\to 0^+}{\limsup} \,\left\|\frac{-s}{\Gamma(1-s)} \int_1^\infty \frac{1}{t^{1+s}} \left(P_t f -  f\right) dt\right\|_1\leq 2||f||_1.
$$
To establish \eqref{claimbeauty} are thus left with showing that
\begin{equation}\label{claiminf2}
\underset{s\to 0^+}{\liminf} \,\left\|\frac{-s}{\Gamma(1-s)} \int_1^\infty \frac{1}{t^{1+s}} \left(P_t f -  f\right) dt\right\|_1\geq 2||f||_1.
\end{equation}
We argue similarly to the proof of \eqref{claiminf} in Lemma \ref{L:quattro}. Fix $\ve>0$ and let $K_\ve\subset \RN$ be a compact set such that
\begin{equation}\label{decideK2}
||f||_{L^1\left(\RN\smallsetminus K_\ve\right)} = \int_{\RN\smallsetminus K_\ve}f(\xi)d\xi \leq \ve.
\end{equation}
Hence, we obtain
\begin{align*}
&\left\|\frac{-s}{\Gamma(1-s)} \int_1^\infty \frac{1}{t^{1+s}} \left(P_t f -  f\right) dt\right\|_1 
\\
&\geq \frac{s}{\Gamma(1-s)}\left\|\int_1^\infty \frac{1}{t^{1+s}}f dt\right\|_{L^1\left(K_\ve\right)} - \frac{s}{\Gamma(1-s)}\left\|\int_1^\infty \frac{1}{t^{1+s}}P_t f dt\right\|_{L^1\left(K_\ve\right)}
\\
&+\frac{s}{\Gamma(1-s)}\left\|\int_1^\infty \frac{1}{t^{1+s}}P_tf dt\right\|_{L^1\left(\RN\smallsetminus K_\ve\right)} - \frac{s}{\Gamma(1-s)}\left\|\int_1^\infty \frac{1}{t^{1+s}}f dt\right\|_{L^1\left(\RN\smallsetminus K_\ve\right)}\\
&=\frac{1}{\Gamma(1-s)}||f ||_{L^1\left(K_\ve\right)} +\frac{s}{\Gamma(1-s)}\left\|\int_1^\infty \frac{1}{t^{1+s}}P_tf dt\right\|_1 
\\
&-\frac{2s}{\Gamma(1-s)}\left\|\int_1^\infty \frac{1}{t^{1+s}}P_tf dt\right\|_{L^1\left(K_\ve\right)} - \frac{1}{\Gamma(1-s)}||f||_{L^1\left(\RN\smallsetminus K_\ve\right)}.
\end{align*}
By \eqref{decideK2}, we have $||f ||_{L^1\left(K_\ve\right)}\geq ||f||_1-\ve$. Moreover, since $f\geq 0$  and $\tr=0$, as in \eqref{normuno} we have $s\left\|\int_1^\infty \frac{1}{t^{1+s}}P_tf dt\right\|_1 = ||f||_1$. Finally, from \eqref{PtKt} and \eqref{sqrtbound} we find
\begin{align*}
&\left\|\int_1^\infty \frac{1}{t^{1+s}}P_tf dt\right\|_{L^1\left(K_\ve\right)}\leq \int_1^\infty\frac{1}{t^{1+s}}\int_{\RN} f(Y) \left(\int_{K_\ve}p(X,Y,t)dX\right)dYdt\\
&\leq c_N |K_\ve| ||f||_1 \int_1^\infty \frac{1}{t^{1+s} V(t)} dt \leq c'_N |K_\ve| ||f||_1 \int_1^\infty \frac{1}{t^{1+s+\frac{1}{2}}} dt = c'_N |K_\ve| \frac{2||f||_1}{2s+1}.
\end{align*}
We thus conclude 
\begin{align*}
\left\|\frac{-s}{\Gamma(1-s)} \int_1^\infty \frac{1}{t^{1+s}} \left(P_t f -  f\right) dt\right\|_1 \geq\frac{2||f||_1}{\Gamma(1-s)} -\frac{2\ve}{\Gamma(1-s)} - \frac{4s}{(2s+1)\Gamma(1-s)} c'_N|K_\ve| ||f||_1,
\end{align*}
which implies
$$
\underset{s\to 0^+}{\liminf} \,\left\|\frac{-s}{\Gamma(1-s)} \int_1^\infty \frac{1}{t^{1+s}} \left(P_t f -  f\right) dt\right\|_1 \geq 2||f||_1 - 2\ve.
$$
Since the choice of $\ve$ is arbitrary, the proof of \eqref{claiminf2} is complete.

\end{proof}

 

\bibliographystyle{amsplain}

\end{document}